\documentclass{amsart}
%\usepackage{amssymb, latexsym, euscript}

%\usepackage{showkeys}
%\usepackage{refcheck}
%\usepackage[pagewise]{lineno}
%\linenumbers % turn on line numbering

\usepackage{amssymb}
\usepackage{latexsym}
\usepackage{amsmath}
\usepackage{euscript}
% \usepackage{refcheck}
% \usepackage[pagewise]{lineno}
% \linenumbers % turn on line numbering

\newtheorem{thm}{Theorem}[section]
\newtheorem{cor}[thm]{Corollary}
\newtheorem{lem}[thm]{Lemma}
\newtheorem{prop}[thm]{Proposition}
\theoremstyle{definition}
\newtheorem{defn}[thm]{Definition}
\theoremstyle{remark}
\newtheorem{rem}[thm]{Remark}
\newtheorem*{ex}{Example}
\numberwithin{equation}{section}

\def\sS{{\mathfrak S}}

      \def\dC{{\mathbb C}}

   \def\dN{{\mathbb N}}   
      \def\dR{{\mathbb R}}

   \def\dZ{{\mathbb Z}}

      \def\cI{{\mathcal I}}
      
\def\cM{{\mathcal M}}      
      
   \def\cT{{\mathcal T}}

\def\wt#1{{{\widetilde #1} }}

\def\wh#1{{{\widehat #1} }}

\def\bm\chi{\mbox{\boldmath$\chi$}}

\def\IM{{\rm Im\,}}
\def\ran{{\rm ran\,}}

\def\rank{{\rm rank\,}}

\let\xker=\ker \def\ker{{\xker\,}}

\def\sgn{{\rm sgn\,}}

\def\cmr{{\dC \setminus \dR}}

\newcommand{\adots}{\makebox[0.9ex][l]{\raisebox{-0.2ex}{.}} \raisebox{0.4ex}{.}
\makebox[0.9ex][r]{\raisebox{1.1ex}{.}}}

\begin{document}

\title[Truncated moment problems]
{Truncated moment problems in the class of generalized Nevanlinna
functions}

\author[V.A.~Derkach]{Vladimir Derkach}
\address{Department of Mathematical Analysis \\
Donetsk National University \\
Universitetskaya str. 24 \\
83055 Donetsk \\
Ukraine} \email{derkach.v@gmail.com}
\author[S.~Hassi]{Seppo Hassi}
\address{Department of Mathematics and Statistics \\
University of Vaasa \\
P.O. Box 700, 65101 Vaasa \\
Finland} \email{sha@uwasa.fi}
\author[H.S.V.~de~Snoo]{Henk de Snoo}
\address{Department of Mathematics and Computing Science \\
University of Groningen \\
P.O. Box 800, 9700 AV Groningen \\
Nederland} \email{desnoo@math.rug.nl}

%\date{}%{20.12.2010, DegMom10.12.20}

\thanks{The research was supported by NWO, the Finnish Cultural Foundation, South
Ostrobothnia Regional fund.} %and by --- in the Netherland ???}

\subjclass{Primary 30E05; Secondary  15B57, 46C20, 47A57.}

\keywords{Generalized Nevanlinna function, asymptotic expansion,
moment problem, Hankel matrix,  factorization, Schur-Chebyshev
algorithm.}
\begin{abstract}
Truncated moment problems in the class of generalized Nevanlinna
functions are investigated. General solvability criteria will be
established, covering both the even and odd problems, including
complete parametrizations of solutions. The main new results concern
the case where the corresponding Hankel matrix of moments is
degenerate. One of the new effects which reveals in the indefinite
case is that the degenerated moment problem may have infinitely many
solutions. However, with a careful application of an indefinite
analogue of a step-by-step Schur algorithm a complete description of
the set of solutions will be obtained.
\end{abstract}

\maketitle
\section{Introduction}

The main purpose of this paper is to study general truncated (real)
moment problems and some associated interpolation problems involving
a finite sequence of real numbers $s_0,s_1, \dots, s_{\ell}$. In
order to describe some of the contents and results in the paper it
is natural to start by recalling a couple of notions and results
appearing in classical truncated moment problems.

The truncated Hamburger moment problem for real numbers $s_0,s_1,
\dots, s_{\ell}$ $(\ell\in\dZ_+)$ consists of finding a positive
measure $\mu$ on $I=\dR$ for which
\begin{equation}\label{0.1}
 \int_I t^j\,d\mu(t)=s_j, \quad j=0,1,\dots, \ell.
\end{equation}
This problem will be called {\it odd} or {\it even }, if the number
$\ell$ is odd or even, respectively. In the case where $I=\dR_+$ the
problem~\eqref{0.1}  is called the truncated Stieltjes moment
problem. Due to the Hamburger-Nevanlinna theorem \cite[Theorem
3.2.1]{Ach61} in the even case ($\ell=2n$) the conditions in
\eqref{0.1} can be rewritten in terms of the associated function
$\varphi(\lambda)$ defined by
\begin{equation}\label{eq:AssF}
     \varphi(\lambda)=\int_I \frac{d\mu(t)}{t-\lambda}, \quad
\lambda \in \cmr,
\end{equation}
as the following interpolation problem at $\lambda=\infty$:
\begin{equation}\label{0.2}
\varphi(\lambda)=-\frac{s_0}{\lambda}-\dots-\frac{s_{\ell}}{\lambda^{\ell+1}}
+o\left(\frac{1}{\lambda^{\ell+1}}\right), \quad \lambda \wh \to
\infty.
\end{equation}
The notation $\lambda \wh \to \infty$ means that $\lambda \to
\infty$ nontangentially, i.e. $\delta < \arg \lambda < \pi-\delta$
for some $\delta > 0$. Recall, that $\varphi$ belongs to the class
$\mathbf{N}_0$ of Nevanlinna functions, i.e., $\varphi(\lambda)$ is
holomorphic on $\cmr$, satisfies the symmetry condition
$\overline{\varphi(\lambda)}=\varphi(\bar{\lambda})$, and has a
nonnegative imaginary part for all $\lambda \in \dC_+$. The moment
problem \eqref{0.1} can now be reformulated as follows: find a
Nevanlinna function $\varphi(\lambda)$ such that \eqref{0.2} holds.
It follows easily from \eqref{0.1} that the following inequality
\begin{equation}\label{0.3}
S_n:=(s_{i+j})_{i,j=0}^n \ge 0,
\end{equation}
is necessary for the problem \eqref{0.1} to be solvable. In the case
where the matrix $S_n$ is invertible this condition is also
sufficient for \eqref{0.1} to be solvable, and all its solutions are
described by the formula (see \cite{Nev29}):
\begin{equation}\label{0.4}
\varphi(\lambda)=\int_\dR \frac{d\mu(t)}{t-\lambda}
=-\frac{{Q_{n}}(\lambda)\tau(\lambda)+Q_{n+1}(\lambda)}
{{P_{n}}(\lambda)\tau(\lambda)+P_{n+1}(\lambda)},
\end{equation}
where $P_n$ are polynomials of the first kind orthonormal with
respect to ${\mathfrak S}$,
\begin{equation}\label{0.04}
% P_{n}(\lambda)=\frac{1}{\det S_{n}}{\det}\begin{pmatrix} 1 & \dots &
% s_{n} \\ \hdotsfor3 \\ s_{n-1} &\dots &s_{2n-1} \\ 1& \dots &
% \lambda^{n}
% \end{pmatrix}, \quad
 Q_{n}(\lambda)={\mathfrak S} \left(
 \frac{P_{n}(t)-P_{n}(\lambda)}{t-\lambda} \right)
\end{equation}
are polynomials of the second kind and $\tau(\lambda)$ is an
arbitrary Nevanlinna function from the class $\mathbf{N_0}$ which
satisfies the Nevanlinna condition
\begin{enumerate}
  \item [(E)] \qquad$\tau(\lambda)=o(\lambda)\quad\mbox{ as }\quad\lambda \wh \to
  \infty$;
\end{enumerate}
in \eqref{0.04} $\sS$ stands for the nonnegative functional defined
on the set $\dC[t]$ of polynomials via
\[
  \sS(t^j)=s_j,\quad j=0,1,\dots,2n.
\]

Notice that this classical result depends essentially also on the
assumption that the moment problem is even (i.e., $\ell=2n$).
Indeed, the odd Hamburger moment problem is not equivalent to the
interpolation problem~\eqref{0.2}. A convenient framework to
formulate the problem in the odd case is provided by the classes
${\mathbf N}_{0,-\ell}$ appearing in \cite{HSW}: they consist of
functions $f\in{\mathbf N}_0$ of the form~\eqref{eq:AssF}, such that
the measure $\mu$ in \eqref{eq:AssF} satisfies the condition
\[
\int_{\dR}(1+|t|^\ell)d\mu(t)<\infty\quad(\ell=-1,0,1,2,\dots).
\]
Then the Hamburger-Nevanlinna theorem can be restated as follows:
$\mu$ is a solution of the moment problem \eqref{0.1} if and only if
the associated function $\varphi$ belongs to the class ${\mathbf
N}_{0,-\ell}$ and has the asymptotic expansion~\eqref{0.2}. It is a
consequence of the results in the present paper (see
Corollary~\ref{cor:5.1}) that the set of solutions of the
nondegenerate odd moment problem \eqref{0.1} can also be given in
the form~\eqref{0.4}, where $\tau$ now ranges over the class
${\mathbf N}_{0,1}$ and satisfies
\begin{enumerate}
  \item [(O)]
  \qquad$\tau(\lambda)=o(1)\quad\mbox{ as }\quad\lambda \wh \to \infty.$
\end{enumerate}
It should be mentioned that in the special case of the nondegenerate
odd Stieltjes moment problem the set of solutions was parametrized
in~\cite{Kr67} with the parameter $\tau$ ranging over the class of
Stieltjes functions (i.e. Nevanlinna functions of the
form~\eqref{eq:AssF} with $I=\dR_+$). Such functions automatically
belong to the class ${\mathbf N}_{0,1}$ and they satisfy also the
condition~(O). However, it seems to the authors that for general
measures, whose support is not contained in some semiaxis in $\dR$,
the above mentioned description of the solution set for
nondegenerate odd Hamburger moment problem has not appeared in the
literature earlier.

In the case where the matrix $S_n$ is degenerate the condition
\eqref{0.3} is not anymore sufficient for the problem \eqref{0.1} to
be solvable; see \cite{KN77,Ioh82,CF91}. Recall that the
\textit{Hankel rank}, denoted by $\rank ({\bf s},2n)$, of the
sequence $({\bf s},2n)=\{s_j\}_{j=0}^{2n}$ is defined as follows:
$\rank({\bf s},2n)=n+1$ if $\det S_n\neq 0$, otherwise, $\rank ({\bf
s},2n)$ is the smallest integer $r$, $0\le r\le n$, such that
\begin{equation}\label{eq:vect}
 \begin{pmatrix} s_r \\ \vdots\\s_{r+n} \end{pmatrix}
 \in \mbox{span }\left( \begin{pmatrix} s_0 \\ \vdots\\s_{n} \end{pmatrix}, \dots,
  \begin{pmatrix} s_{r-1} \\ \vdots\\s_{r-1+n} \end{pmatrix}\right).
\end{equation}
 By a Frobenius theorem (see~\cite[Lemma X.10.1]{Ga})
Hankel rank  of $({\bf s},2n)$ is the smallest integer $r$, $1\le
r\le n+1$, such that
\[
\det S_{r-1}\ne 0, \mbox{ and }\det S_{j}= 0 \mbox{ for }j\ge r.
\]
In particular, $\rank ({\bf s},2n)=0$ if $s_0=\dots=s_n=0$,
otherwise $\rank ({\bf s},2n)$ is the smallest integer $r$, $1\le
r\le n+1$, such that
\[
\det S_{r-1}\ne 0, \mbox{ and }\det S_{j}= 0 \mbox{ for }j\ge r.
\]

 A sequence
$({\bf s},2n)=\{s_j\}_{j=0}^{2n}$ with the Hankel rank $r=\mbox{rank
}({\bf s},2n)$ is called {\it recursively generated}, if there exist
numbers $\alpha_0,\dots,\alpha_{r-1}$, such that
   \begin{equation} \label{0.01}
s_{j} =\alpha_0s_{j-r}+\dots +\alpha_{r-1}s_{j-1}\quad (r\le j\le
2n).
   \end{equation}

Now solvability criteria for the degenerate truncated moment
\eqref{0.1} can be formulated as follows.

\begin{thm}[\cite{CF91}]\label{een}
Let the matrix $S_n=(s_{i+j})_{i,j=0}^n$ be nonnegative and
degenerate, and let $r=\rank{\bf s}$. Then the following statements
are equivalent:
\begin{enumerate}\def\labelenumi{\rm (\roman{enumi})}

\item the moment problem \eqref{0.1} is solvable;

\item $\rank S_n=r$;

\item $S_n$ admits a nonnegative Hankel extension $S_{n+1}$;

\item the sequence $({\bf s},2n)=\{s_j\}_{j=0}^{2n}$ is recursively
generated.
\end{enumerate}
If any of the assumptions (i)--(iv) are satisfied, then the problem
\eqref{0.1} has a unique solution
$\varphi(\lambda)=-\frac{Q_{r}(\lambda)}{P_{r}(\lambda)}$.
\end{thm}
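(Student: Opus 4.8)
The plan is to make condition (iv) the hub, realize a solution as a finitely atomic measure on the zeros of $P_r$, and derive everything else from this picture. I first record the standing setup from the Frobenius form of the Hankel rank: $\det S_{r-1}\neq 0$ while $\det S_j=0$ for $j\ge r$, and since $S_n\ge 0$ this forces $S_{r-1}>0$. Thus $\sS$ is positive definite on polynomials of degree $<r$, the orthogonal polynomial $P_r$ is the unique monic degree-$r$ solution of $\sS(t^iP_r)=0$ $(0\le i\le r-1)$, it has $r$ distinct real zeros $x_1,\dots,x_r$, and the Gauss quadrature weights $\rho_j>0$ in $\sS(q)=\sum_j\rho_jq(x_j)$ $(\deg q\le 2r-1)$ are positive; put $\mu=\sum_j\rho_j\delta_{x_j}$. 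To prove (iv)$\Rightarrow$(i) I note that the recurrence coefficients solve the same nonsingular system $S_{r-1}\alpha=(s_r,\dots,s_{2r-1})^{\top}$ defining $P_r$, hence coincide with its coefficients; since $P_r(x_j)=0$, the moments $\hat s_m=\int t^m\,d\mu$ satisfy the same recurrence as $\{s_m\}$ and agree with it for $m\le 2r-1$ by exactness of quadrature, so $\hat s_m=s_m$ for all $m\le 2n$ and $\mu$ solves \eqref{0.1}. A partial-fraction computation then identifies $\int d\mu/(t-\lambda)$ with $-Q_r/P_r$, with $Q_r$ as in \eqref{0.04}. The converse (i)$\Rightarrow$(iv), together with uniqueness, comes from testing an arbitrary solution $\nu$ against $P_r^2$: using $2r\le 2n$ and the determinantal identity $\sS(P_r^2)=\det S_r/\det S_{r-1}=0$, one gets $\int P_r^2\,d\nu=0$, so $\supp\nu\subseteq\{x_1,\dots,x_r\}$; matching the moments $s_0,\dots,s_{2r-1}$ through the invertible Vandermonde at the distinct nodes forces $\nu=\mu$, so the moments obey the recurrence and the solution is the unique function $-Q_r/P_r$. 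The same atomic description yields (i)$\Rightarrow$(ii), since then $S_n=\sum_j\rho_j v(x_j)v(x_j)^{\top}$ with $v(t)=(1,t,\dots,t^n)^{\top}$ has rank at most $r$, while $S_{r-1}$ nonsingular forces rank at least $r$.

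The linear-algebraic core is (ii)$\Rightarrow$(iv). Identifying a vector of $\ker S_n$ with a polynomial $p$ of degree $\le n$, the relation $S_np=0$ reads $\sS(t^kp)=0$ for $0\le k\le n$. Positive definiteness of $S_{r-1}$ shows every nonzero kernel polynomial has degree $\ge r$, and since $S_n\ge 0$ a singular leading block would yield a kernel vector of strictly smaller degree; it follows that the minimal-degree kernel polynomial is exactly $P_r$, whence $\sS(t^mP_r)=0$ for $m\le n$. The crucial point is a divisibility lemma: for any $g\in\ker S_n$, write $g=uP_r+\rho$ with $\deg\rho<r$; since $\sS(t^iuP_r)=\sum_ju_j\sS(t^{i+j}P_r)$ and $i+j\le n-1$ whenever $i\le r-1$, one gets $\sS(t^i\rho)=0$ for $0\le i\le r-1$, so $S_{r-1}$ nonsingular gives $\rho=0$ and $P_r\mid g$. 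Hence $\ker S_n\subseteq\{uP_r:\deg u\le n-r\}$; under (ii) both spaces have dimension $n+1-r$, so they coincide, which places $t^jP_r$ in $\ker S_n$ for all $j\le n-r$ and delivers $\sS(t^mP_r)=0$ for $m\le 2n-r$, i.e. (iv).

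It remains to tie in (iii). The implication (iv)$\Rightarrow$(iii) is immediate, as the truncation $\big(\int t^{i+j}\,d\mu\big)_{i,j=0}^{n+1}$ of the moment matrix of $\mu$ is a nonnegative Hankel extension of $S_n$. For (iii)$\Rightarrow$(iv) I would run the same analysis on the longer sequence: a singular principal block shows the extension $S_{n+1}$ is again degenerate with Hankel rank $r$, and the minimal-kernel argument now gives $P_r\in\ker S_{n+1}$, i.e. the improved range $\sS(t^mP_r)=0$ for $m\le n+1$. From here a positivity induction closes the gap: for $t^{j+1}P_r$ of degree below $n+1$ one computes $S_{n+1}w=\eta\,e_{n+1}$ with the last entry of the coefficient vector $w$ equal to zero, so $\langle S_{n+1}w,w\rangle=0$ and $S_{n+1}\ge 0$ forces $\eta=\sS(t^{\,n+j+2}P_r)=0$; each such step lengthens the vanishing range by one, and since the relevant degrees stay below $n+1$ the induction reaches $m\le 2n-r$, which is (iv). Together these give (iv)$\Rightarrow$(i)$\Rightarrow$(ii)$\Rightarrow$(iv) and (iv)$\Rightarrow$(iii)$\Rightarrow$(iv), so all four are equivalent.

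The step I expect to be the real obstacle is propagating the vanishing relations $\sS(t^mP_r)=0$ from the automatic range $m\le n$ (the columns of $S_n$) out to the full range $m\le 2n-r$; this is exactly where degeneracy can break solvability, as the sequence $(1,0,0,0,1)$ shows. The mechanism that overcomes it differs by hypothesis: under (ii) it is the dimension count forcing $\ker S_n$ to consist precisely of the multiples of $P_r$, while under (iii) it is the single extra level of positivity that buys one more vanishing relation and lets the induction run. The remaining ingredients---distinct real zeros of $P_r$, positive Christoffel weights, and exactness of Gauss quadrature---are standard consequences of $S_{r-1}>0$.
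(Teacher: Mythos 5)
Your argument is correct, and every step holds up under scrutiny: the reduction of everything to the vanishing relations $\sS(t^mP_r)=0$, $0\le m\le 2n-r$, the quadrature construction for (iv)$\Rightarrow$(i), the $\int P_r^2\,d\nu=\det S_r/\det S_{r-1}=0$ support argument for uniqueness and (i)$\Rightarrow$(iv), the kernel-divisibility-plus-dimension-count for (ii)$\Rightarrow$(iv), and the PSD-propagation induction for (iii)$\Rightarrow$(iv) all check out (only the trivial corner case $r=0$ is left to conventions). However, you should be aware that the paper contains no proof of this statement at all: Theorem~\ref{een} is imported from Curto--Fialkow \cite{CF91} and used as a known ingredient, so there is nothing internal to compare with line by line. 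The closest in-paper relative is the indefinite generalization (Theorem~\ref{thm:3.1}), which is proved by a genuinely different route: the Schur--Chebyshev algorithm of Section~4, i.e.\ reduction by normal indices and linear fractional transformations to a basic problem with induced moments ${\bf s}^{(N)}$, with the Hankel-extension condition handled by Lemma~\ref{lem:5.4} and the block-matrix Lemma~\ref{lem:3.1}. The trade-off is clear. Your approach is elementary and self-contained, and it delivers the representing measure, the uniqueness, and the formula $-Q_r/P_r$ in one stroke; but it uses positivity at every essential turn --- reality and simplicity of the zeros of $P_r$, positivity of the Christoffel weights, and the deduction $\int P_r^2\,d\nu=0\Rightarrow\supp\nu\subseteq\{P_r=0\}$ --- so it cannot be transported to the classes $\mathbf{N}_\kappa$ with $\kappa>0$, which is precisely why the paper builds the recursive machinery instead. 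It is also worth noticing that your (iii)$\Rightarrow$(iv) induction is in essence the $\nu_-=0$ specialization of the paper's Lemma~\ref{lem:5.4}: there, too, an inertia-preserving Hankel extension is converted into the recursion \eqref{0.01} by exploiting that vanishing of the quadratic form on a vector forces that vector into the kernel, exactly the mechanism you use with $\langle S_{n+1}w,w\rangle=0$.
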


It is interesting to note that Theorem~\ref{een} contains as a
corollary the following rigidity result due to D. Burns and S.
Krantz~\cite{BKr94}: if $\varphi$ is a rational Nevanlinna function
of degree $r$ with the asymptotic expansion~\eqref{0.2} and $\psi$
is a Nevanlinna function such that
$\psi(\lambda)=\varphi(\lambda)+o(\frac{1}{\lambda^{2r+1}})$ as
$\lambda \wh \to \infty$, then
$\psi(\lambda)\equiv\varphi(\lambda)$.

The main subject of the present paper is the study of degenerate odd
and even moment problems involving finite sequences $s_0,s_1, \dots,
s_{\ell}$ of real numbers by means of functions belonging to the
class of generalized Nevanlinna functions, which contains the class
of Nevanlinna functions appearing in \eqref{eq:AssF} as a subclass.

\begin{defn}\label{def:1.1}{\rm (\cite{KL79})}
Let $\kappa \in \dN$. A function $\varphi$  meromorphic on $\dC_+$
is said to be from the class $\mathbf{N}_\kappa$, $\kappa \in \dN$,
of \textit{generalized Nevanlinna functions} with $\kappa$ negative
squares, if the kernel
\[
\mathsf{N}_\omega(\lambda)=\frac{\varphi(\lambda)-\overline{\varphi(\omega)}}{\lambda-\bar{\omega}}
\]
has $\kappa$ negative squares on $\dC_+$, i.e. for every choice of
$m\in\dN$, $\lambda_1, \dots, \lambda_m \in \dC_+$ the matrix
\[
 (\mathsf{N}_{\lambda_k}(\lambda_i))_{i,k=1}^m
\]
has at most $\kappa$ and for some choice of $m,\,\lambda_j$ exactly
$\kappa$ negative eigenvalues.
\end{defn}
In \cite{DHS1,DHS07} (see Definitions~\ref{def:Nk1}, \ref{def:Nkl}
below) subclasses $\mathbf{N}_{\kappa,-\ell}$ of the class
$\mathbf{N}_\kappa$ were introduced as indefinite analogues of the
subclasses $\mathbf{N}_{0,-\ell}$ appearing in \cite{HSW}.

In this paper we consider in a parallel way the following two
problems:

\noindent \textbf{Indefinite truncated moment problem ${
MP}_\kappa(s,\ell)$}: Given are $\kappa\in \dN,\,\ell \in \dZ_+$,
and $s_0, \dots, s_{\ell}\in\dR$. Find  a function $\varphi \in
\mathbf{N}_{\kappa,-\ell}$ with the asymptotic
expansion~\eqref{0.2}. Denote by $\cM_\kappa(s,\ell)$ the set of
solutions of this problem.

\noindent \textbf{Multiple indefinite interpolation problem ${
IP}_\kappa(s,\ell)$}: Given are $\kappa\in \dN,\,\ell \in \dZ_+$,
and $s_0, \dots, s_{\ell}\in\dR$. Find  a function $\varphi \in
\mathbf{N}_\kappa$ with the asymptotic expansion~\eqref{0.2}. The
set of functions with these properties is denoted by
$\cI_\kappa(s,\ell)$.

As was mentioned above, the problem ${MP}_0(s,\ell)$ is equivalent
to the truncated Hamburger moment problem~\eqref{0.1}. Furthermore,
in the even case ($\ell=2n$)
$\cM_\kappa(s,\ell)=\cI_\kappa(s,\ell)$, while in the odd case
($\ell=2n+1$) we have $\cM_\kappa(s,\ell)\subset
\cI_\kappa(s,\ell)$, but the reverse inclusion fails to hold in
general.

The problems ${ MP}_\kappa(s,\ell)$, ${ IP}_\kappa(s,\ell)$ will be
called nondegenerate, if
\begin{equation}\label{0.7}
\det S_n\ne 0, \mbox{ for }n=[\ell/2];
\end{equation}
otherwise they are called degenerate. The indices $j$ for which
$\det S_{j-1} \ne 0$ are called \textit{normal indices} of the
Hankel matrix $S_n$. Let
\[
 n_1<n_2<\dots<n_N \le n+1
\]
be the sequence of all \textit{normal indices} of the matrix $S_n$.
In the case of arbitrary Hankel matrix $S_n$ we show that the
largest normal index $n_N$ of $S_n$ coincides with the Hankel rank
of the sequence ${\bf s}$.

A necessary condition for the problems ${ MP}_\kappa({\bf s},\ell)$,
${ IP}_\kappa(s,\ell)$ to be solvable is that
\begin{equation}\label{0.6}
\kappa\ge \nu_-(S_n) ,
\end{equation}
where $\nu_-(S_n)$ is the total multiplicity of all negative
eigenvalues of $S_n$. The method we use for the solution of the
moment problem ${ MP}_\kappa({\bf s},2n)$ and ${ IP}_\kappa(s,\ell)$
for $\kappa\ge\nu_-(S_n)$ is based on the Schur-Chebyshev recursion
algorithm, studied in the nondegenerate situation by
M.~Derevjagin~\cite{Derev03} (see also~\cite{ADL07}).
 With this method every solution $\varphi$ of the
moment problem ${ MP}_\kappa({\bf s},2n)$ can be obtained via
   \begin{equation} \label{2.25i}
\varphi(\lambda)=
\frac{-s_{n_1-1}}{p_1(\lambda)+\varepsilon_1\varphi_1(\lambda)},
   \end{equation}
where $p_1(\lambda)=P_{n_1}(\lambda )$, $\varepsilon_1=\sgn
s_{n_1-1}$, and $\varphi_1$ is a solution of an "induced" moment
problem ${ MP}_{\kappa-\kappa_1}({\bf s}^{(1)},2(n-n_1))$ with
$\kappa_1=\nu_-(S_{n_1-1})$.

In the case of a nondegenerate moment problem the
condition~\eqref{0.6} is also sufficient for the problem ${
MP}_\kappa({\bf s},\ell)$ to be solvable and subsequent applications
of the formula~\eqref{2.25i} shows that $\cM_\kappa({\bf s},\ell)$
in the even case $(\ell=2n)$ is parametrized via the linear
fractional transformation
\begin{equation}\label{0.27}
    \varphi(\lambda)=-\frac{Q_{n_{N-1}}(\lambda)\tau(\lambda)+Q_{n_{N}}(\lambda)}
    {P_{n_{N-1}}(\lambda)\tau(\lambda)+P_{n_{N}}(\lambda)},
\end{equation}
with the parameter $\tau$ ranging over the class
$\mathbf{N}_{\kappa-\nu_-(S_n)}$ and satisfying the Nevanlinna
condition (E); see \cite{Dym89,Der99}. In this formula $P_j$ and
$Q_j$ are polynomials of the first and the second types introduced
in~\cite{Der99}. In the odd case a similar description of the sets
$\cM_\kappa({\bf s},\ell)$ and $\cI_\kappa({\bf s},\ell)$ is given
in Theorem~\ref{thm:5.1}, with the parameter $\tau$ ranging over the
class $\mathbf{N}_{\kappa-\nu_-(S_n),1}$ and
$\mathbf{N}_{\kappa-\nu_-(S_n)}$, respectively, and satisfying the
condition (O). It should be mentioned, that this result for
$\cI_\kappa({\bf s},\ell)$ can be derived also from the recent
paper~\cite{ADLRSh10} on boundary interpolation in generalized
Nevanlinna classes.

Now let us briefly describe some of the main results obtained in the
present paper for the degenerate indefinite truncated moment problem
in the even case. As in the definite case, for a degenerate problem
the condition \eqref{0.6} is not sufficient for the problem
${MP}_\kappa({\bf s},2n)$ to be solvable. The following theorem
gives some solvability criteria in the special case, where
$\kappa=\nu_-(S_{n})$; in fact, this result offers a natural
generalization for the results due to Curto and Fialkow \cite{CF91},
which were formulated in Theorem~\ref{een} above.

\begin{thm}\label{1}
Let $n_1<n_2<\dots<n_N$ be the sequence of all normal indices of a
degenerate matrix $S_n=(s_{i+j})_{i,j=0}^n$ and let
$\kappa=\nu_-(S_n). $ Then the following statements are equivalent:
\begin{enumerate}\def\labelenumi{\rm (\roman{enumi})}

\item the moment problem ${ MP}_\kappa({\bf s},2n)$ is solvable;

\item $\rank S_n=n_N$;

\item $S_n$ admits a Hankel extension $S_{n+1}$ such that
\[
\nu_-(S_{n+1})=\nu_-(S_n);
\]

\item the sequence ${\bf s}=\{s_j\}_{j=0}^{2n}$ is recursively
generated.
\end{enumerate}
If any of the assumptions (i)--(iv) is satisfied, then the problem
${ MP}_\kappa({\bf s},2n)$ has a unique solution
$\varphi(\lambda)=-\frac{Q_{n_N}(\lambda)}{P_{n_N}(\lambda)}$.
\end{thm}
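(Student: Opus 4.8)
The plan is to prove the cyclic chain of implications (i) $\Rightarrow$ (ii) $\Rightarrow$ (iii) $\Rightarrow$ (iv) $\Rightarrow$ (i), exploiting the structure coming from the normal indices and the Schur--Chebyshev recursion. The key observation is that in the case $\kappa=\nu_-(S_n)$ there is no "room" for extra negative squares: any solution $\varphi\in\mathbf{N}_{\kappa,-2n}$ must be as rigid as possible. I would first establish the implication (i) $\Rightarrow$ (ii). Given a solution $\varphi$, repeated application of the recursion~\eqref{2.25i} peels off the normal indices $n_1,\dots,n_N$, each step $n_{i-1}\to n_i$ consuming exactly $\kappa_i=\nu_-(S_{n_i-1})$ negative squares (see the discussion preceding Theorem~\ref{een}). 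Since $\kappa=\nu_-(S_n)=\sum_i\kappa_i$ is exhausted after reaching $n_N$, the induced problem at level $N$ must have $\kappa-\nu_-(S_n)=0$ negative squares left, which forces the residual parameter $\varphi_N$ to be trivial (a real constant or identically the degenerate tail). This collapse is precisely the statement that the Hankel rank $\rank S_n$ equals $n_N$, the largest normal index; indeed the largest normal index coincides with the Hankel rank as stated in the text between~\eqref{0.7} and~\eqref{0.6}.

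For (ii) $\Rightarrow$ (iii), I would use the Frobenius-type characterization of normal indices already quoted in the excerpt: $\rank S_n=n_N$ means $\det S_{n_N-1}\ne0$ while $\det S_j=0$ for all $j\ge n_N$. The task is to produce one more moment $s_{2n+1}$ (equivalently a Hankel extension $S_{n+1}$) without creating new negative squares. Because the last $n-n_N+1$ columns of $S_n$ lie in the span of the first $n_N$ columns, the extension is forced: the recursive generation relation~\eqref{0.01} with coefficients $\alpha_0,\dots,\alpha_{n_N-1}$ determined by the invertible leading block $S_{n_N-1}$ dictates the value of $s_{2n+1}$, and by a standard congruence/Schur-complement argument the bordered matrix $S_{n+1}$ has the same inertia as $S_n$, giving $\nu_-(S_{n+1})=\nu_-(S_n)$. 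The implication (iii) $\Rightarrow$ (iv) then follows by translating the inertia-preserving extension back into the existence of the recursion coefficients: preserving the negative index while bordering a degenerate Hankel matrix is exactly the algebraic condition that the new column be a prescribed linear combination of the previous ones, i.e.\ that ${\bf s}$ be recursively generated in the sense of~\eqref{0.01}.

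The remaining and most substantive step is (iv) $\Rightarrow$ (i) together with the uniqueness and the explicit formula $\varphi=-Q_{n_N}/P_{n_N}$. Here I would argue that recursive generation allows the sequence to be continued consistently to an infinite Hankel sequence with $\nu_-=\kappa$ fixed, so that the associated functional $\mathfrak{S}$ lives on a finite-dimensional (indefinite) inner product space of dimension $n_N$ spanned by the orthogonal polynomials $P_0,\dots,P_{n_N-1}$. The candidate $\varphi=-Q_{n_N}/P_{n_N}$ is rational of degree $n_N$, lies in $\mathbf{N}_\kappa$ with exactly $\kappa=\nu_-(S_n)$ negative squares (this is where the theory of polynomials of the first and second type from~\cite{Der99} enters, giving $\nu_-=$ the number of sign changes encoded in $S_{n_N-1}$), and its asymptotic expansion at $\infty$ reproduces $s_0,\dots,s_{2n}$ by construction of $Q_{n_N}$ via~\eqref{0.04}. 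Uniqueness is the real obstacle: one must show no other solution exists, which again comes from the rigidity forced by $\kappa=\nu_-(S_n)$---any second solution $\psi$ would satisfy $\psi-\varphi=o(\lambda^{-(2n_N+1)})$, and an indefinite Burns--Krantz-type rigidity (the indefinite analogue of the corollary to Theorem~\ref{een} mentioned in the text) forces $\psi\equiv\varphi$. I expect the hardest part to be controlling the negative-square count through the recursion and pinning down this uniqueness, since in the indefinite setting degenerate problems can in general have infinitely many solutions, as emphasized in the abstract; it is exactly the borderline hypothesis $\kappa=\nu_-(S_n)$ that eliminates all freedom and must be used decisively.
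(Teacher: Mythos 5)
Your cycle (i)$\Rightarrow$(ii)$\Rightarrow$(iii)$\Rightarrow$(iv)$\Rightarrow$(i) has the right overall shape, and (i)$\Rightarrow$(ii) is in the spirit of the paper's argument, but the negative-square bookkeeping you rely on is misstated, and at the points where the real work happens you substitute assertions for proofs. First, $\nu_-(S_n)=\sum_i\kappa_i$ is false: the $\kappa_j=\nu_-(S_{n_j-1})$ are cumulative, not additive, and after $N$ Schur steps the remaining budget is $\kappa-\kappa_N=\nu_-(S_n)-\nu_-(S_{n_N-1})$, which is not zero a priori. What actually forces the collapse is the pair of inertia identities of Corollary~\ref{cor:2.3} (proved via Lemma~\ref{lem:2.9} in the Appendix), namely $\nu_-(S^{(N)}_{n-n_N})=\nu_-(S_n)-\nu_-(S_{n_N-1})$ and $\nu_0(S^{(N)}_{n-n_N})=\nu_0(S_n)>0$, combined with the solvability dichotomy for degenerate basic problems (Lemma~\ref{prop:3.3}): a residual problem of type (B) would require budget at least $\nu_-(S^{(N)}_{n-n_N})+\nu_0(S^{(N)}_{n-n_N})$, which is impossible because the budget equals $\nu_-(S^{(N)}_{n-n_N})$ exactly; hence all induced moments vanish and $\rank S_n=n_N$. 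Without these identities (i)$\Rightarrow$(ii) does not close. Second, in (ii)$\Rightarrow$(iii) the column relation $v_{n_N}=\sum_j\alpha_jv_j$ only yields $s_k=\sum_j\alpha_js_{k-n_N+j}$ for $n_N\le k\le n_N+n$; to have it ``dictate'' $s_{2n+1}$ you must propagate the relation up to $k=2n+1$, and that propagation is a genuine Hankel-structure argument (one has to exclude $e_n\in\ran S_n$, e.g.\ by comparing $\rank S_{n-1}$ with $n_N$), which you never supply. The paper deliberately avoids this difficulty: it never proves $\rank S_n=n_N\Rightarrow$ recursive generation directly, but routes it through the analytic machinery and obtains recursive generation from the inertia-preserving extension via the block-matrix Lemma~\ref{lem:3.1} and Lemma~\ref{lem:5.4}.

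The most serious gap is in (iv)$\Rightarrow$(i). For existence, the claim that the expansion of $-Q_{n_N}/P_{n_N}$ reproduces $s_0,\dots,s_{2n}$ ``by construction'' is not true as stated: the polynomials $P_{n_N},Q_{n_N}$ are built from $s_0,\dots,s_{2n_N-1}$ only, and matching the higher moments is exactly where hypothesis (iv) must enter (one identifies $P_{n_N}$, up to normalization, with the recursion polynomial $t^{n_N}-\alpha_{n_N-1}t^{n_N-1}-\dots-\alpha_0$, so that the Laurent coefficients of $-Q_{n_N}/P_{n_N}$ obey the same recursion \eqref{0.01} with the same initial data as ${\bf s}$). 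For uniqueness, you outsource the key step to the indefinite Burns--Krantz rigidity of \cite{ADLRSh10}. This inverts the paper's logic: the paper obtains uniqueness internally --- the Schur-algorithm bijection of Theorem~\ref{thm:2.2} reduces the original problem to the induced problem with all moments zero and zero remaining negative index, whose unique solution is $\varphi_N\equiv 0$ by the definite Curto--Fialkow theorem (Theorem~\ref{een}), and transporting $\varphi_N\equiv 0$ back through $W_{[1,N]}$ gives $-Q_{n_N}/P_{n_N}$ --- and then derives the rigidity theorem as a corollary of the result you are proving. Your route is not formally circular, since \cite{ADLRSh10} is an independent source, but it leaves what you yourself identify as the hardest step resting on an unproved external black box, whereas the reduction to the definite case settles existence, uniqueness, and the explicit formula simultaneously and self-containedly.
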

As a consequence of Theorem~\ref{1} a rigidity result for
generalized Nevanlinna functions from~\cite{ADLRSh10} can be derived
(see also~\cite{Bol08}).

A new effect which appears in the indefinite case is that the
degenerate moment problem ${ MP}_\kappa({\bf s},2n)$ has infinitely
many solutions for $\kappa$ large enough. As will be shown below,
the problem ${ MP}_\kappa({\bf s},2n)$ with $\kappa> \nu_-(S_n)$ is
solvable if and only if
\begin{equation}\label{eq:SolvCond}
\kappa\ge \nu_-(S_n)+\nu_0(S_n).
\end{equation}
If, in addition, $\rank S_n=n_N+1$ and $\nu$ satisfies some
appropriate further conditions (see~\eqref{eq:nu} below), then the
solution set $\cM_\kappa({\bf s},2n)$ can be described by the
formula~\eqref{0.27}, where
\begin{equation}\label{A01}
\tau(\lambda)=\frac{\wh\tau(\lambda)}{\lambda^{2\nu_0}},
\end{equation}
and $\wh\tau$ is a function from the class
$\mathbf{N}_{\kappa-\nu}$, which satisfies~(E); see
Theorem~\ref{thm:3.2}.

On the other hand, if \eqref{eq:SolvCond} holds and $\rank
S_n>n_N+1$, then the solution set $\cM_\kappa({\bf s},2n)$ is
described by~\eqref{0.27}, where
\begin{equation}\label{A02}
\tau(\lambda)=\frac{-\varepsilon}{\lambda^{2\nu_0}(\wh
p(\lambda)+\varepsilon\wh\tau(\lambda))},
\end{equation}
$\wh p$ is a polynomial of degree $n-\nu_0+1$ (as given
in~\eqref{4.5b} below), and $\wh\tau$ is a function from the class
$\mathbf{N}_{\kappa-\nu}$, which satisfies~(E); see
Theorem~\ref{thm:5.6}.

In the odd case the degenerate indefinite truncated moment problem
can be treated analogously. A condition, similar
to~\eqref{eq:SolvCond} appeared in~\cite{Wor97} as a solvability
condition for a degenerate indefinite Nevanlinna-Pick interpolation
problem.

In Section 2 the basic tools needed in this paper are given.
Solutions to the so-called basic moment and interpolation problems
will be described in Section 3. Section 4 describes a general
Schur-Chebyshev recursion algorithm, which makes use of the normal
indices of the associated Hankel matrix $S_n=(s_{i+j})_{i,j=0}^n$
defined in \eqref{0.3}. In Section 5 solvability criteria and
complete descriptions for the set of solutions of the problems
${MP}_\kappa({\bf s},\ell)$ and ${IP}_\kappa({\bf s},\ell)$ in the
general setting are established. Finally, Appendix contains some
results on block matrices, which are needed in this paper; however,
they may be also of independent interest: for instance,
Lemma~\ref{lem:3.1} gives an extension of a well-know result on
nonnegative block matrices.

\section{Preliminaries}
\subsection{Canonical factorizations of generalized Nevanlinna functions}

The definition of the class $ \mathbf{N}_\kappa$ of generalized
Nevanlinna functions is given in the Introduction. Clearly, if
$\varphi \in \mathbf{N}_\kappa$ and $\varphi\not\equiv 0$, then also
$-1/\varphi \in \mathbf{N}_\kappa$.

Recall (see~\cite{KL81}), that the point $\alpha\in\dR$ is called a
\textit{generalized pole of nonpositive type} (GPNT) of the function
$\varphi\in {\bf N}_\kappa$ with multiplicity
$\kappa_\alpha(\varphi)$ if
\begin{equation}
\label{gpol} -\infty < \lim_{z\widehat{\rightarrow }\alpha}
(z-\alpha)^{2\kappa_{\alpha}+1}\varphi(z) \leq 0,\quad 0 <
\lim_{z\widehat{\rightarrow }\alpha}
(z-\alpha)^{2\kappa_{\alpha}-1}\varphi(z) \leq \infty.
\end{equation}
Similarly, the point $\infty$ is called a \textit{generalized pole
of nonpositive type} of $\varphi$ with multiplicity
$\kappa_\infty(\varphi)$ if
\begin{equation}
\label{infgpol} 0\leq\lim_{z\widehat{\rightarrow }\infty }
\frac{\varphi(z)}{z^{2\kappa_{\infty}+1}} < \infty,\quad
-\infty\leq\lim_{z\widehat{\rightarrow }\infty }
\frac{\varphi(z)}{z^{2\kappa_{\infty}-1}} < 0.
\end{equation}

A point $\beta\in\dR\cup\{\infty\}$ is called a \emph{generalized
zero of nonpositive type} (GZNT) of the function $\varphi\in {\bf
N}_\kappa$ if $\beta$ is a generalized pole of nonpositive type of
the function $-1/\varphi$. The multiplicity $\pi_\beta(\varphi)$ of
the generalized zero of nonpositive type $\beta$ of $\varphi$ can be
characterized by the inequalities:
\begin{equation}
\label{gzero} 0 < \lim_{z\widehat{\rightarrow }\beta}
\frac{\varphi(z)}{(z-\beta)^{2\pi_{\beta}+1}} \leq \infty,\quad
-\infty < \lim_{z\widehat{\rightarrow }\beta}
\frac{\varphi(z)}{(z-\beta)^{2\pi_{\beta}-1}} \leq 0.
\end{equation}
Similarly, the point $\infty$ is a \emph{generalized zero of
nonpositive type} of $\varphi$ with multiplicity
$\pi_\infty(\varphi)$ if
\begin{equation}
\label{infgzer} -\infty \leq\lim_{z\widehat{\rightarrow }\infty }
{z^{2\pi_{\infty}+1}}\varphi(z) < 0,\quad 0\leq
\lim_{z\widehat{\rightarrow }\infty }
{z^{2\pi_{\infty}-1}}\varphi(z) < \infty.
\end{equation}

\begin{rem} \label{rem:2.1}
If $\varphi_1 \in \mathbf{N}_{\kappa_1}$ and $\varphi_2 \in
\mathbf{N}_{\kappa_2}$ then $\varphi_1+\varphi_2$ belongs to
$\mathbf{N}_\kappa$, where $\kappa \le \kappa_1+\kappa_2$. It was
shown by M.G. Kre\u{\i}n and H.~Langer in~\cite{KL81} for
$\varphi\in {\bf N}_\kappa$ that the total multiplicity of poles
(zeros) in $\dC_+$ and generalized poles (zeros) of nonpositive type
in $\dR\cup\{\infty\}$ is equal to $\kappa$. As a corollary of this
result one obtains that if $\varphi_1 \in \mathbf{N}_{\kappa_1}$ and
$\varphi_2 \in \mathbf{N}_{\kappa_2}$ have no common poles in
$\dC_+$ and common generalized poles  of nonpositive type in
$\dR\cup\{\infty\}$ then
$\varphi_1+\varphi_2\in\mathbf{N}_{\kappa_1+\kappa_2}$.
\end{rem}

The generalized poles and zeros of nonpositive type of a generalized
Nevanlinna function give rise to the following factorization result
(\cite{DLLSh}, see also \cite{DHS1}).

\begin{thm}\label{thm:2.1}
Let $\varphi\in {\bf N}_\kappa$ and let $\alpha_1,\dots,\alpha_l$
($\beta_1,\dots,\beta_m$) be all the generalized poles (zeros) of
nonpositive type of $\varphi$ in $\dR$ and the poles (zeros) of
$\varphi$ in $\dC_+$ with multiplicities $\kappa_1,\dots,\kappa_l$
($\pi_1,\dots,\pi_m$). Then the function $\varphi$ admits a (unique)
canonical factorization of the form
\begin{equation}
\label{CanF1} \varphi(z)=r(z)r^\#(z)\varphi_0(z),
\end{equation}
where $\varphi_0\in {\bf N}_0$, $r^\#(z)=\overline{r(\bar{z})}$, and
$r={p}/{ q}$ with relatively prime polynomials
\[
 p(z)=\prod_{j=1}^m(z-\beta_j)^{\pi_j}, \quad
 q(z)=\prod_{j=1}^l(z-\alpha_j)^{\kappa_j},
\]
of degree $\kappa-\pi_\infty(\varphi)$ and
$\kappa-\kappa_\infty(\varphi)$, respectively.
\end{thm}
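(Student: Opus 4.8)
The plan is to reduce to the classical case $\varphi_0\in\mathbf{N}_0$ by dividing out, one at a time, the finitely many generalized poles and zeros of nonpositive type, and to read off the degrees of $p$ and $q$ from the Kre\u{\i}n--Langer counting principle recalled in Remark~\ref{rem:2.1}. By that principle the total multiplicity of the poles of $\varphi$ in $\dC_+$ together with the generalized poles of nonpositive type in $\dR\cup\{\infty\}$ equals $\kappa$, and dually for zeros; hence $\deg q=\sum_{j=1}^l\kappa_j=\kappa-\kappa_\infty(\varphi)$ and $\deg p=\sum_{j=1}^m\pi_j=\kappa-\pi_\infty(\varphi)$, which are exactly the claimed degrees. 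I would also record at the outset that $r r^\#$ is real and nonnegative on $\dR$ (indeed $r(x)r^\#(x)=|r(x)|^2$), that $p$ and $q$ are relatively prime since a point cannot be simultaneously a pole and a zero, and that $\kappa_\infty(\varphi)$ and $\pi_\infty(\varphi)$ cannot both be positive.

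The engine of the reduction is a single divisibility lemma. For a real generalized pole of nonpositive type $\alpha$ of $\psi\in\mathbf{N}_{\kappa'}$ of multiplicity $\kappa_\alpha\ge1$ I would show, directly from the negative-squares definition by examining how the kernel $\mathsf{N}_\omega$ transforms, that $(z-\alpha)^2\psi\in\mathbf{N}_{\kappa'-1}$ with the multiplicity at $\alpha$ lowered by one and no new generalized pole of nonpositive type created. Because of the symmetry $\overline{\psi(\bar z)}=\psi(z)$, a pole in $\dC_+$ at $\alpha$ is matched by one at $\bar\alpha$, and multiplication by the real factor $(z-\alpha)(z-\bar\alpha)$ plays the same role. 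Dually, since $-1/\varphi\in\mathbf{N}_\kappa$ and the generalized zeros of nonpositive type of $\varphi$ are by definition the generalized poles of nonpositive type of $-1/\varphi$, division by the corresponding real quadratic factors removes a $\beta_j$ while dropping the index by one. These facts may alternatively be quoted from the divisibility results of Kre\u{\i}n and Langer \cite{KL81}.

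Applying these steps I form $\varphi_0=q q^\#\varphi/(p p^\#)$, which is again a generalized Nevanlinna function (a real rational multiple of $\varphi$ lies in some $\mathbf{N}_{\kappa'}$). The finite generalized poles and zeros of nonpositive type of $\varphi$ are cancelled by the roots of $q q^\#$ and $p p^\#$, while the generalized pole or zero at $\infty$ is absorbed by the degree mismatch $\deg p-\deg q=\kappa_\infty(\varphi)-\pi_\infty(\varphi)$, which governs the growth $r\sim z^{\kappa_\infty-\pi_\infty}$. Thus $\varphi_0$ has no poles or zeros in $\dC_+$ and no generalized poles or zeros of nonpositive type in $\dR\cup\{\infty\}$; by the counting principle its number of negative squares is then $0$, i.e.\ $\varphi_0\in\mathbf{N}_0$, which is precisely \eqref{CanF1}.

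The delicate point, and where I expect the real work to lie, is the behavior at $\infty$: the index does not change additively under the combined operation $\psi\mapsto q q^\#\psi/(p p^\#)$ (for instance, multiplying by $q q^\#$ may simultaneously remove a finite pole of nonpositive type and tame a generalized zero of nonpositive type at $\infty$), so one cannot simply sum the per-factor index drops. The control comes from a careful analysis of the asymptotic conditions \eqref{infgpol} and \eqref{infgzer} together with the degree identity above; a convenient device is to first transport $\infty$ to a finite point by a real M\"obius change of variable, carry out the finite removal there, and transform back. Uniqueness is then immediate: the roots of $p$ and $q$, with their multiplicities, are precisely the intrinsic generalized zeros and poles of nonpositive type of $\varphi$ (in $\dC_+$ and $\dR$) as characterized by \eqref{gpol}--\eqref{infgzer}, so the monic quotient $r=p/q$ is uniquely determined, and hence so is $\varphi_0=\varphi/(r r^\#)$.
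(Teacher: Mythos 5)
A preliminary remark: the paper never proves Theorem~\ref{thm:2.1}; it imports it from \cite{DLLSh} (see also \cite{DHS1}), where the argument is operator-theoretic (a minimal realization of $\varphi$ in a Pontryagin space and a maximal nonpositive invariant subspace). So your proposal must stand on its own. Its skeleton is the standard one — the degrees of $p$ and $q$ do come from the Kre\u{\i}n--Langer counting principle recalled in Remark~\ref{rem:2.1}, and uniqueness does come from the intrinsic characterizations \eqref{gpol}--\eqref{infgzer} — but your engine lemma is false, so the induction collapses. Take $\alpha\in\dR\setminus\{0\}$ and
\[
\psi(z)=\frac{1}{(z-\alpha)^{2}}-\frac{1}{z}.
\]
Here $1/(z-\alpha)^{2}\in\mathbf{N}_{1}$ with its only GPNT at $\alpha$ (multiplicity $1$), and $-1/z\in\mathbf{N}_{0}$ has none, so $\psi\in\mathbf{N}_{1}$ by Remark~\ref{rem:2.1}, and \eqref{gpol} shows that $\alpha$ is a GPNT of $\psi$ of multiplicity $1$. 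Nevertheless
\[
(z-\alpha)^{2}\psi(z)=1-\frac{(z-\alpha)^{2}}{z}=(1+2\alpha)-z-\frac{\alpha^{2}}{z}
\]
is not in $\mathbf{N}_{0}$: its imaginary part at $z=iy$ equals $-y+\alpha^{2}/y<0$ for large $y$; in fact it lies in $\mathbf{N}_{1}$ and has acquired a GPNT at $\infty$ (check \eqref{infgpol} with $\kappa_{\infty}=1$). So multiplication by a single quadratic factor at a GPNT neither lowers the index nor avoids creating a new GPNT. Your M\"obius repair does not help either: in this example $\infty$ is already neither a GPNT nor a GZNT of $\psi$, i.e.\ the configuration is already ``normalized'', and the step still fails.

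The reason is the phenomenon you half-identify in your last paragraph but do not resolve: the factor $(z-\alpha)^{2}$ injects $z^{2}$-growth at infinity, which is absorbed only when $\psi$ has a GZNT at $\infty$; here the unique GZNT of $\psi$ sits at a finite real zero $\beta$ of $\psi$. The correct elementary step must remove a GPNT--GZNT \emph{pair} by the degree-zero factor $(z-\alpha)^{2}/(z-\beta)^{2}$ — and if you unwind your M\"obius device you will find it is exactly this: conjugating multiplication by $(w-\mu(\alpha))^{2}$ through a real M\"obius map $\mu$ gives multiplication by $c\,(z-\alpha)^{2}/(z-\beta)^{2}$ with $\beta=\mu^{-1}(\infty)$, which helps only if $\beta$ is chosen to be a GZNT. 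But then the exact index count for this pair step is precisely the hard content of the theorem, and it cannot be read off ``directly from the negative-squares definition by examining how the kernel transforms'': the identity
\[
\frac{(z-\alpha)^{2}\psi(z)-\overline{(\omega-\alpha)^{2}\psi(\omega)}}{z-\bar\omega}
=(z-\alpha)\overline{(\omega-\alpha)}\,\frac{\psi(z)-\overline{\psi(\omega)}}{z-\bar\omega}
+(z-\alpha)\psi(z)+\overline{(\omega-\alpha)\psi(\omega)}
\]
shows only that each quadratic factor changes the number of negative squares by at most one in either direction (the added kernel $g(z)+\overline{g(\omega)}$, $g(z)=(z-\alpha)\psi(z)$, has at most one negative square), so the pair operation is a priori controlled only by $|\Delta\kappa|\le 2$, with no sign information. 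Pinning the change to exactly $-1$ is where the hypotheses \eqref{gpol}, \eqref{gzero} must be used in earnest; in \cite{DLLSh} this is done through the Pontryagin space realization. Your fallback of quoting \cite{KL81} is not available: that paper supplies the limit characterizations and the counting principle you already use, but not the single-factor divisibility claim — which, as the example shows, is simply not true.
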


It follows from~\eqref{CanF1} that the function $\varphi$ admits the
(factorized) integral representation
\begin{equation}
\label{IntR} \varphi(z)=
r(z)r^\#(z)\left(a+bz+\int_\dR\left(\frac{1}{t-z}
-\frac{t}{1+t^2}\right)d\rho(t)\right), \quad r=\frac{p}{ q},
\end{equation}
where $a\in\dR$, $b\ge 0$, and $\rho(t)$ is a nondecreasing function
satisfying the integrability condition
\begin{equation}
\label{Nint} \int_\dR \frac{d\rho(t)}{t^2+1}<\infty.
\end{equation}

\subsection{The subclasses ${\bf N}_{\kappa,-\ell}$
 of generalized Nevanlinna functions}
 \label{sec3.1}

\begin{defn}\label{def:Nk1}(see \cite{DHS1})
A function $\varphi \in {\bf N}_{\kappa}$ is said to belong to the
subclass ${\bf N}_{\kappa,1}$, if
\begin{equation}\label{eq:N1}
    \lim_{z\widehat{\rightarrow }\infty } \frac{\varphi\left( z\right)}{
 z}=0 \mbox{ and } \int_\eta^\infty \frac{|\mbox{Im } \varphi \left(
 iy\right)| }{y} \,dy<\infty,
\end{equation}
with $\eta>0$ large enough. Similarly, a function $\varphi\in {\bf
N}_{\kappa}$ is said to belong to the subclass ${\bf N}_{\kappa,0}$,
if
\begin{equation}\label{eq:N0}
\lim_{z\widehat{\rightarrow }\infty }\frac{\varphi(z)}{ z}=0 \mbox{
and } \limsup_{z \widehat{\to} \infty} |z\, \IM \varphi (z)| <
\infty.
\end{equation}

\end{defn}
\begin{rem} \label{rem:2.6}
Every function $\varphi \in {\bf N}_{\kappa,1}$ has a nontangential
limit $\lim_{\lambda\widehat{\rightarrow }\infty }\varphi(\lambda)$
at infinity. As was shown in~\cite{DHS1} the following implication
holds:
\[
\varphi \in \mathbf{N}_{\kappa,1},
\,\,\,\lim_{\lambda\widehat{\rightarrow }\infty }\varphi(\lambda)\ne
0 \Rightarrow  -1/\varphi \in \mathbf{N}_{\kappa,1}.
\]
\end{rem}

In the following theorem the subclasses ${\bf N}_{\kappa,1}$ and
${\bf N}_{\kappa,0}$ are characterized in terms of the  integral
representation~\eqref{IntR}.
\begin{thm}
\label{N1} {\rm (\cite{DHS1})} For $\varphi \in {\bf N}_\kappa$ and
$\ell=0,1$ the following statements are equivalent:
\begin{enumerate}
\def\labelenumi{\rm (\roman{enumi})}
\item
$\varphi$ belongs to ${\bf N}_{\kappa,\ell}$;
\item
$\varphi$ has the integral representation \eqref{IntR} with $\deg
 q-\deg  p=\pi_\infty(\varphi)>0$, or with $\deg  p=\deg
q$ ($\pi_\infty(\varphi)=0$), $b=0$, and
\begin{equation}
\label{N1int} \int_{\dR} \,(1+|t|)^{-\ell} d\rho(t)<\infty.
\end{equation}
\end{enumerate}
\end{thm}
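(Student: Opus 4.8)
The plan is to reduce everything to the classical Nevanlinna factor $\varphi_0$ via the canonical factorization of Theorem~\ref{thm:2.1}, and then read off both defining conditions of ${\bf N}_{\kappa,\ell}$ from the integral representation~\eqref{IntR}. Write $\varphi=r r^\#\varphi_0$ with $r=p/q$, $\deg p=\kappa-\pi_\infty(\varphi)$, $\deg q=\kappa-\kappa_\infty(\varphi)$, and put $R:=r r^\#$. Since the zeros and poles of $R$ occur in conjugate pairs, $R$ is a real rational function with an expansion $R(z)=z^{2m}\bigl(1+c_1/z+c_2/z^2+\cdots\bigr)$, where $m=\deg p-\deg q=\kappa_\infty(\varphi)-\pi_\infty(\varphi)$ and $c_k\in\dR$. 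As $\varphi_0\in{\bf N}_0$ has the representation~\eqref{IntR} with data $a,b,\rho$, I will use the elementary identity $\IM\varphi_0(iy)=by+y\int_\dR \frac{d\rho(t)}{t^2+y^2}$ together with the companion formula for $\RE\varphi_0(iy)$.

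First I would dispose of the condition $\lim_{z\,\widehat{\to}\,\infty}\varphi(z)/z=0$, which is common to $\ell=0$ and $\ell=1$. By the defining inequalities~\eqref{infgpol}, a positive $\kappa_\infty(\varphi)$ forces $\varphi(z)/z^{2\kappa_\infty-1}$ to stay bounded away from $0$, hence $\varphi(z)/z\not\to0$; thus $\lim\varphi(z)/z=0$ implies $\kappa_\infty(\varphi)=0$, i.e. $\deg q=\kappa$. If moreover $\pi_\infty(\varphi)>0$ then $\deg q-\deg p=\pi_\infty(\varphi)>0$, which is exactly the first alternative in~(ii). If instead $\pi_\infty(\varphi)=0$, then $\deg p=\deg q$, $R(z)\to1$, and $\varphi(z)/z\to b$, so the condition is equivalent to $b=0$. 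This yields precisely the degree dichotomy of~(ii), and conversely each alternative of~(ii) returns $\lim\varphi(z)/z=0$.

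It remains to match the integrability requirement. In the case $\pi_\infty(\varphi)>0$ one has $m=-\pi_\infty\le-1$, so $R(iy)\asymp y^{-2\pi_\infty}$; since $\IM\varphi_0(iy)=O(y)$ this gives $\IM\varphi(iy)=O(y^{1-2\pi_\infty})$, whence both $\int_\eta^\infty y^{-1}|\IM\varphi(iy)|\,dy<\infty$ and $\limsup_y|y\,\IM\varphi(iy)|<\infty$ hold automatically; this is why the first alternative in~(ii) carries no condition on $\rho$. In the remaining case $\pi_\infty=\kappa_\infty=0$, $b=0$, I would compute, using Tonelli and monotone convergence,
\[
\int_\eta^\infty\frac{\IM\varphi_0(iy)}{y}\,dy\asymp\int_\dR\frac{d\rho(t)}{1+|t|},\qquad \lim_{y\to\infty}y\,\IM\varphi_0(iy)=\int_\dR d\rho(t),
\]
so that for $\varphi_0$ itself membership in the $\ell$-class is equivalent to $\int_\dR(1+|t|)^{-\ell}\,d\rho<\infty$. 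The last step is to transfer these equivalences from $\varphi_0$ to $\varphi=R\varphi_0$.

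The transfer is the main obstacle, because $R(iy)$ is not real: writing $\IM\varphi(iy)=\RE R(iy)\,\IM\varphi_0(iy)+\IM R(iy)\,\RE\varphi_0(iy)$, one has $\RE R(iy)\to1$, but the cross term involves $\RE\varphi_0(iy)$, which may itself be unbounded when $\int(1+|t|)^{-\ell}d\rho=\infty$. For the direction (ii)$\Rightarrow$(i) this is harmless: the integrability hypothesis on $\rho$ bounds $\RE\varphi_0(iy)$, and since $\IM R(iy)=O(1/y)$ the cross term is negligible. For (i)$\Rightarrow$(ii) the point is to show the cross term is always of lower order than the principal term. Here I would use that the principal term is comparable to $M(y):=\int_\dR \frac{y^2}{t^2+y^2}\,d\rho$ while the cross term is controlled by $\int_\dR \frac{|t|}{1+t^2}\,\frac{y^2}{t^2+y^2}\,d\rho$; a weighted-average estimate, exploiting that $\frac{|t|}{1+t^2}\to0$ and that $M(y)\to\infty$ forces the weights $\frac{y^2}{t^2+y^2}\,d\rho$ to escape to $|t|=\infty$, shows the cross term is $o(M(y))$ whenever $M(y)\to\infty$ and $O(1)$ otherwise. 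Consequently the $\ell$-criterion for $\varphi$ reduces to that for $\varphi_0$, i.e. to $\int_\dR(1+|t|)^{-\ell}d\rho<\infty$. Finally, passing from the ray $z=iy$ to general nontangential approach uses the standard monotonicity of the Nevanlinna transform along rays in a Stolz sector.
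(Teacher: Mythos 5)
Your statement is one the paper does not prove at all: Theorem~\ref{N1} is quoted from \cite{DHS1} as a known characterization, and no argument for it appears anywhere in the text. So there is no in-paper proof to compare against; what can be said is whether your self-contained, function-theoretic route works. In essence it does. Reducing to $\varphi_0\in{\bf N}_0$ via the canonical factorization of Theorem~\ref{thm:2.1}, identifying the common condition $\varphi(z)/z\to0$ with the degree dichotomy of (ii) (using \eqref{infgpol} to exclude $\kappa_\infty(\varphi)>0$, and $\varphi_0(z)/z\to b$ to force $b=0$ when $\pi_\infty(\varphi)=0$), noting that $\pi_\infty(\varphi)>0$ makes both imaginary-part conditions automatic, and then transferring the classical criteria for $\varphi_0$ (Tonelli for $\ell=1$, monotone convergence for $\ell=0$) through the real rational factor $R=rr^\#=1+c_1/z+\cdots$ is a sound plan. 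The crux is exactly where you locate it, the cross term $\IM R(iy)\,\RE\varphi_0(iy)$, and your weighted-average claim is correct: writing $M(y)=\int_\dR\frac{y^2}{t^2+y^2}\,d\rho(t)$ and $E(y)=\int_\dR\frac{|t|}{1+t^2}\frac{y^2}{t^2+y^2}\,d\rho(t)$, splitting the integral at $|t|=T$ and using that $\rho$ is finite on compacts (by \eqref{Nint}) gives $E(y)\le\frac12\rho([-T,T])+\varepsilon M(y)$, hence $E(y)=o(M(y))$ when $M(y)\to\infty$ and $E(y)=O(1)$ otherwise; this is enough to run both implications for $\ell=0$ and $\ell=1$.

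Two spots need repair, though neither is fatal. First, in the case $\pi_\infty(\varphi)>0$ you argue that $\IM\varphi_0(iy)=O(y)$ gives $\IM\varphi(iy)=O(y^{1-2\pi_\infty})$; since the imaginary part of a product is not the product of imaginary parts (the very issue you flag later), you must use the modulus bound $|\varphi_0(iy)|=O(y)$ (equivalently, $\varphi_0(iy)/(iy)\to b$) and then $|\IM\varphi(iy)|\le|R(iy)|\,|\varphi_0(iy)|$. Second, the ${\bf N}_{\kappa,0}$ condition is a nontangential $\limsup$, and ``monotonicity along rays'' is not the relevant tool. What is actually needed (only in the direction (ii)$\Rightarrow$(i) for $\ell=0$; the other direction may simply restrict the $\limsup$ to the imaginary axis, and the $\ell=1$ condition is posed on the imaginary axis to begin with) is the Stolz-angle kernel comparison $|t-z|^2\ge c_\delta(t^2+|z|^2)$ for $\delta<\arg z<\pi-\delta$, which yields $|z|\,\IM\varphi_0(z)\le c_\delta^{-1}\int_\dR d\rho(t)$ and, when $\int_\dR d\rho(t)<\infty$, also $|\RE\varphi_0(z)|=O(1)$ in the sector, so that the cross term stays $O(1/|z|)$ there. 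With these two emendations your argument is complete and constitutes a legitimate proof of the cited theorem.
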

\begin{rem}
\label{N0o} If $\varphi\in{\bf N}_{\kappa,0}$ then  the statement
(ii) in Theorem~\ref{N1} can be strengthened in the sense that for
every function $\varphi\in {\bf N}_{\kappa,0}$ there are real
numbers $\gamma$ and $s_0$, such that
\begin{equation}
\label{a20} \varphi\left( z\right)=\gamma-\frac{s_0}{z}+o\left(
\frac{1}{z}\right), \quad z\widehat{\rightarrow }\infty.
\end{equation}
\end{rem}

\begin{defn}\label{def:Nkl} (\cite{DHS07})
A function $\varphi\in {\bf N}_\kappa$ is said to belong to the
subclass ${\bf N}_{\kappa,-2n}$, $n\in\dN$, if there are real
numbers $\gamma$ and $s_0,\dots,s_{2n-1}$ such that the function
\begin{equation}
\label{tQ} {\varphi}^{[2n]}(z)=z^{2n}\left(
\varphi(z)-\gamma+\sum_{j=1}^{2n}\frac{s_{j-1}}{z^j}\right)
\end{equation}
is $O(1/z)$ as $z\widehat{\rightarrow }\infty$. Moreover,
$\varphi\in {\bf N}_\kappa$ is said to belong to the subclass ${\bf
N}_{\kappa,-2n+1}$ if the function ${\varphi}^{[2n]}$ in \eqref{tQ}
belongs to ${\bf N}_{\kappa',1}$ for some $\kappa'\in\dN$,
$\kappa'\le \kappa$.
\end{defn}

As was shown in~\cite{DHS07}, the following inclusions are satisfied
\begin{equation}
\label{subincl} \cdots \subset {\bf N}_{\kappa,-2n-1}\subset {\bf
N}_{\kappa,-2n} \subset {\bf N}_{\kappa,-2n+1}\subset\cdots \subset
{\bf N}_{\kappa,0}\subset {\bf N}_{\kappa,1}.
\end{equation}

The subclasses ${\bf N}_{\kappa,-\ell}$, $\ell \in \dN$, can be
characterized by means of  the integral representation of $\varphi$
in~\eqref{IntR}.

\begin{thm}
\label{subcl}  (\cite{DHS07}) For $\varphi\in {\bf N}_\kappa$ the
following statements are equivalent:
\begin{enumerate}
\def\labelenumi{\rm (\roman{enumi})}
\item $\varphi\in {\bf N}_{\kappa,-\ell}$, $\ell \in {\dN}$;

\item $\varphi$
has an integral representation \eqref{IntR} with
$\pi_\infty(\varphi)=\deg  q-\deg  p \ge 0$ (and $b=0$ if
$\pi_\infty(\varphi)=0$), such that
\begin{equation}
\label{Nlint} \int_{\dR} \,(1+|t|)^{\ell-2\pi_\infty}
d\rho(t)<\infty.
\end{equation}
\end{enumerate}
\end{thm}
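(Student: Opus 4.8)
The plan is to establish the equivalence (i)$\Leftrightarrow$(ii) by induction on $\ell$, taking the cases $\ell=0,1$ from Theorem~\ref{N1} as the base and advancing in steps of two. The organizing principle is that, after applying the canonical factorization $\varphi=r\,r^\#\varphi_0$ of Theorem~\ref{thm:2.1} (where $\varphi_0\in{\bf N}_0$ carries the representation~\eqref{IntR} with polynomial part $a+bz$ and nondecreasing $\rho$), both statements reduce to a single Abelian--Tauberian fact about the Stieltjes transform $\psi(z)=\int_\dR(\frac{1}{t-z}-\frac{t}{1+t^2})\,d\rho(t)$ of the \emph{nondecreasing} $\rho$: the number of terms in the asymptotic expansion of $\psi$ at $\infty$ is governed exactly by the highest finite moment of $\rho$. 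Since $\varphi\in{\bf N}_{\kappa,-\ell}\subset{\bf N}_{\kappa,1}$ by~\eqref{subincl}, the point $\infty$ is a generalized zero of nonpositive type, so $\kappa_\infty(\varphi)=0$ and $\pi_\infty(\varphi)=\deg q-\deg p\ge 0$. The rational factor $r\,r^\#$ has real coefficients, is nonnegative on $\dR$, and satisfies $r(z)r^\#(z)=C z^{-2\pi_\infty}(1+O(1/z))$ with $C>0$ at infinity; it is precisely this shift by $2\pi_\infty$ orders that produces the exponent $\ell-2\pi_\infty$ in~\eqref{Nlint}.

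For the Abelian implication (ii)$\Rightarrow$(i) I would expand the kernel as $\frac{1}{t-z}=-\sum_{k=0}^{m}t^k z^{-(k+1)}+\frac{t^{m+1}}{z^{m+1}(t-z)}$ with $m=\ell-2\pi_\infty$, and use the hypothesis $\int(1+|t|)^{m}\,d\rho<\infty$ together with dominated convergence to control the remainder, thereby obtaining an asymptotic expansion of $\psi$, and hence of $\varphi_0$, to the required order, with coefficients the moments $\int t^k\,d\rho$. Multiplying by $r\,r^\#$ and discarding the now admissible polynomial contribution yields the expansion demanded in Definition~\ref{def:Nkl}: in the even case $\ell=2n$ this shows directly that $\varphi^{[2n]}$ in~\eqref{tQ} is $O(1/z)$, while in the odd case $\ell=2n-1$ one checks, via the $L^1$-criterion~\eqref{eq:N1} of Theorem~\ref{N1}, that $\varphi^{[2n]}\in{\bf N}_{\kappa',1}$.

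The converse (i)$\Rightarrow$(ii) is the Tauberian direction. One starts from the mere existence of finitely many terms of the expansion and must extract integrability of $\rho$. First, the condition $\varphi(z)/z\to0$ pins down $\pi_\infty=\deg q-\deg p$ and forces $b=0$ when $\pi_\infty=0$ (otherwise $\varphi\sim Cb\,z^{1-2\pi_\infty}$ would violate the growth restriction). Dividing out $r\,r^\#$, the task becomes: an expansion of $\psi$ to order $m$ implies $\int(1+|t|)^{m}\,d\rho<\infty$. Here the monotonicity of $\rho$ is indispensable, since the statement is false for signed measures. The engine is the positivity of $\IM\psi(iy)=y\int\frac{d\rho(t)}{t^2+y^2}$, from which lower bounds on truncated moments are read off the expansion coefficients; this is then organized by the inductive transform $\varphi\mapsto\varphi^{[2]}$ that removes the first two moments and multiplies by $z^2$. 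In the purely definite case $\kappa=0=\pi_\infty$ one computes directly $\varphi^{[2]}(z)=\int_\dR\frac{t^2\,d\rho(t)}{t-z}$, the Cauchy transform of the measure $t^2\,d\rho$, so that $\int(1+|t|)^{\ell}\,d\rho<\infty$ is equivalent to $\int(1+|t|)^{\ell-2}\,d(t^2\rho)<\infty$ (the two integrands agree up to lower order for large $|t|$), and the inductive hypothesis at level $\ell-2$ closes the step; the general case tracks the same reduction through the rational factor, landing in a class ${\bf N}_{\wt\kappa}$ with $\wt\kappa\le\kappa$.

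The main obstacle is this Tauberian passage, in two respects. First, one must verify that the two-step transform genuinely returns a generalized Nevanlinna function with correctly modified integral data, so that the induction actually closes; this bookkeeping of $\kappa$, $\pi_\infty$, and $\rho$ under $\varphi\mapsto\varphi^{[2]}$ in the presence of the factor $r\,r^\#$ is the technical heart. Second, the odd indices are more delicate than the even ones: there the defining requirement is full membership $\varphi^{[2n]}\in{\bf N}_{\kappa',1}$ rather than a plain $O$-bound, so the argument must produce the finiteness of $\int_\eta^\infty|\IM\varphi^{[2n]}(iy)|/y\,dy$ from~\eqref{eq:N1}. This half-order of integrability is exactly what separates $\ell=2n-1$ from $\ell=2n$ and what cannot be obtained from the asymptotic expansion alone.
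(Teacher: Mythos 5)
The paper you are working from contains no proof of Theorem~\ref{subcl}: it is quoted from~\cite{DHS07}, where it is established by operator-theoretic means (a factorization model in a Pontryagin space and an analysis of Jordan chains at infinity). Your function-theoretic induction is therefore necessarily a different route, but as proposed it has a genuine flaw at its central step. Your Tauberian reduction claims that, for a \emph{nondecreasing} $\rho$, an asymptotic expansion of $\psi(z)=\int_{\dR}\bigl(\frac{1}{t-z}-\frac{t}{1+t^2}\bigr)d\rho(t)$ to order $m$ forces $\int_{\dR}(1+|t|)^{m}d\rho<\infty$, positivity being what excludes counterexamples. This is correct for even $m$ (monotone convergence applied to $\frac{y^2t^{2k}}{t^2+y^2}\uparrow t^{2k}$), but it is \emph{false} for odd $m$ even for positive measures. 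Concretely, for odd $m$ take the symmetric measure $d\rho(t)=|t|^{-m-1}(\log|t|)^{-1}dt$ on $\{|t|\ge 2\}$: all moments of order $\le m-1$ are finite while $\int_{\dR}|t|^{m}d\rho=\infty$, yet by symmetry $\int_{\dR}\frac{t^{m}d\rho(t)}{t-z}=2\int_2^\infty\frac{dt}{(t^2-z^2)\log t}=O\bigl(\tfrac{1}{|z|\log|z|}\bigr)$ nontangentially, so $\psi$ possesses the full expansion to order $m+1$ (with $m$-th coefficient equal to $0$). This is exactly the phenomenon behind the strict inclusion $\cM_\kappa({\bf s},\ell)\subset\cI_\kappa({\bf s},\ell)$ for odd $\ell$ that the paper emphasizes. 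Consequently, for odd $\ell$ the integrability \eqref{Nlint} can never be extracted from the expansion plus positivity; it must come from the $L^1$-condition in \eqref{eq:N1} satisfied by $\varphi^{[\ell+1]}$, which is the actual content of hypothesis (i) in the odd case. Your closing paragraph acknowledges this (thereby contradicting the reduction you set up two paragraphs earlier), but the proposal contains no mechanism for transporting that $L^1$-condition through the division by $r\,r^{\#}$ and through the inductive steps down to the measure $\rho$; so the implication (i)$\Rightarrow$(ii) for odd $\ell$ is not proved.

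The second gap is the induction engine itself. To iterate you need that $\varphi^{[2]}$ (ultimately $\varphi^{[2n]}$ of \eqref{tQ}) is again a generalized Nevanlinna function, with index at most $\kappa$, whose canonical data in \eqref{IntR} are related to those of $\varphi$ in a controlled way (the measure becoming $t^2d\rho$ modulo the rational factor, $\pi_\infty$ dropping by one, possible new generalized zeros or poles at $0$). You verify this only in the case $\kappa=0$, $r\equiv 1$, where indeed $\varphi^{[2]}(z)=\int_{\dR}\frac{t^2d\rho(t)}{t-z}$; for $\kappa>0$ the finiteness and the value of the negative index of the kernel of $\varphi^{[2]}$, together with the bookkeeping against $r\,r^{\#}$, is precisely what you call ``the technical heart'' and then assert rather than prove. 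Both the Abelian odd case (where one must conclude $\varphi^{[2n]}\in{\bf N}_{\kappa',1}$, hence in particular that $\varphi^{[2n]}$ is a generalized Nevanlinna function at all) and the Tauberian induction depend on this; without it, or a substitute of the type used in \cite{DHS2001} and \cite{DHS07}, the proposal remains a plausible outline rather than a proof.
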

\begin{rem}\label{rem:Nkl}
By Definition~\ref{def:Nkl} every function $\varphi\in {\bf
N}_{\kappa,-\ell}$ with odd $\ell$ admits the asymptotic expansion
\begin{equation}\label{eq:asNkl}
\varphi(\lambda)=\gamma-\frac{s_0}{\lambda}-\dots-\frac{s_{\ell}}{\lambda^{\ell+1}}
+o\left(\frac{1}{\lambda^{\ell+1}}\right), \quad \lambda \wh \to
\infty.
\end{equation}
If $\varphi\in {\bf N}_{\kappa,-\ell}$ and $\ell$ is even due to
Theorem~\ref{subcl} there exists a real number $s_{2n}$, such
that~\eqref{eq:asNkl} holds. Conversely, if  $\varphi\in {\bf
N}_{\kappa}$ and satisfies~\eqref{eq:asNkl} for even $\ell$, then
$\varphi\in {\bf N}_{\kappa,-\ell}$. This proves that in the even
case ($\ell=2n$) $\cM_\kappa(s,\ell)=\cI_\kappa(s,\ell)$, while in
the odd case ($\ell=2n+1$) the
$\cM_\kappa(s,\ell)\subset\cI_\kappa(s,\ell)$.
\end{rem}
The following Lemma is immediate from Definition~\ref{def:Nkl},
Theorem~\ref{subcl} and Remark~\ref{rem:2.1}.
\begin{lem}\label{lem:2.2}
Let  $\varphi\in \mathbf{N}_{\kappa,-\ell}$ let $\nu_0\le
\min\{\pi_\infty(\varphi),\ell/2\}$  and let
\begin{equation}\label{eq:cancel}
    \wh\varphi(\lambda)=\lambda^{2\nu_0} \varphi(\lambda).
\end{equation}
Then $\wh\varphi\in \mathbf{N}_{\kappa-\nu,-(\ell-2\nu_0)}$, where
\begin{equation}\label{eq:nu}
    \nu=\left\{\begin{array}{ll}
  \nu_0, & \mbox{if }\quad\kappa_0(\varphi)> 0; \\
  \kappa_0(\varphi), & \mbox{if }\quad 0\le \kappa_0(\varphi)\le\nu_0; \\
       \end{array}\right.
\end{equation}
Conversely, if $\wh\varphi\in \mathbf{N}_{\wh\kappa,-\wh\ell} $, and
$\kappa_\infty(\wh\varphi)= 0$, $\varphi$ and $\nu$ are given
by~\eqref{eq:cancel} and~\eqref{eq:nu}, then $\varphi\in
\mathbf{N}_{\wh\kappa+\nu,-(\wh\ell+2\nu_0)}$ and $\nu_0\le
\pi_\infty(\varphi)$.
\end{lem}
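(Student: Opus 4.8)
The plan is to reduce everything to the canonical factorization of Theorem~\ref{thm:2.1} together with the integral-representation characterization of the subclasses in Theorem~\ref{subcl}. Write the canonical factorization $\varphi=rr^\#\varphi_0$ with $r=p/q$ (relatively prime) and $\varphi_0\in\mathbf{N}_0$. Since a function in $\mathbf{N}_{\kappa,-\ell}$ decays (it lies in $\mathbf{N}_{\kappa,1}$, so $\varphi(z)/z\to 0$ nontangentially) it has $\kappa_\infty(\varphi)=0$; hence Theorem~\ref{subcl} gives $\pi_\infty(\varphi)=\deg q-\deg p\ge 0$ with $\int_\dR(1+|t|)^{\ell-2\pi_\infty(\varphi)}\,d\rho(t)<\infty$, where $\rho$ is the measure of $\varphi_0$, and the degree formulas of Theorem~\ref{thm:2.1} give $\kappa=\deg q$. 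Because $z^{\nu_0}$ has real coefficients, $z^{2\nu_0}=z^{\nu_0}(z^{\nu_0})^\#$, so
\[
\wh\varphi(z)=z^{2\nu_0}\varphi(z)=\tilde r(z)\,\tilde r^\#(z)\,\varphi_0(z),\qquad \tilde r=\frac{z^{\nu_0}p}{q}.
\]
The crucial structural observation is that the Nevanlinna factor $\varphi_0$, and hence the measure $\rho$, is \emph{unchanged}; only the rational factor and the behaviour at $0$ and at $\infty$ are affected.

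The next step is to bring $\tilde r$ into lowest terms $\tilde p/\tilde q$. The only possible cancellation is between $z^{\nu_0}$ and the factor $z^{\kappa_0}$ of $q$ produced by the generalized pole of nonpositive type at $0$, where $\kappa_0=\kappa_0(\varphi)$; note $p(0)\ne 0$ whenever $\kappa_0>0$, since $0$ cannot be simultaneously a generalized pole and a generalized zero. Writing $q=z^{\kappa_0}q_1$ with $q_1(0)\ne 0$, this produces exactly the two cases of~\eqref{eq:nu}: if $\kappa_0\ge\nu_0$ then $\tilde q=z^{\kappa_0-\nu_0}q_1$ and $\deg\tilde q=\deg q-\nu_0$; if $0\le\kappa_0<\nu_0$ then the pole at $0$ is cancelled completely, a generalized zero of order $\nu_0-\kappa_0$ is created, $\tilde q=q_1$ and $\deg\tilde q=\deg q-\kappa_0$. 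In both cases $\deg\tilde q=\kappa-\nu$ with $\nu=\min\{\kappa_0,\nu_0\}$, i.e. the value in~\eqref{eq:nu}. Equivalently, using the count of negative squares recalled in Remark~\ref{rem:2.1}, each factor $z^2$ removes one negative square as long as there is a generalized pole at $0$ to absorb it and none once that pole is exhausted. Since $\varphi_0\in\mathbf{N}_0$ contributes nothing, $\tilde r\tilde r^\#\varphi_0$ is the canonical factorization of $\wh\varphi$, and Theorem~\ref{thm:2.1} yields $\wh\varphi\in\mathbf{N}_{\kappa-\nu}$.

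It remains to identify the subclass. A degree count gives $\pi_\infty(\wh\varphi)=\deg\tilde q-\deg\tilde p=\pi_\infty(\varphi)-\nu_0$, which is $\ge 0$ precisely because $\nu_0\le\pi_\infty(\varphi)$, while $\nu_0\le\ell/2$ ensures $\ell-2\nu_0\ge 0$. As $\rho$ is unchanged, the integrability exponent transforms as
\[
(\ell-2\nu_0)-2\pi_\infty(\wh\varphi)=(\ell-2\nu_0)-2\bigl(\pi_\infty(\varphi)-\nu_0\bigr)=\ell-2\pi_\infty(\varphi),
\]
so the condition of Theorem~\ref{subcl} for $\wh\varphi$ in the class with index $-(\ell-2\nu_0)$ is exactly the one already available for $\varphi$. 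This gives $\wh\varphi\in\mathbf{N}_{\kappa-\nu,-(\ell-2\nu_0)}$. For the converse one starts from $\wh\varphi\in\mathbf{N}_{\wh\kappa,-\wh\ell}$ with $\kappa_\infty(\wh\varphi)=0$, sets $\varphi=z^{-2\nu_0}\wh\varphi$, and runs the same bookkeeping in reverse: the hypothesis $\kappa_\infty(\wh\varphi)=0$ is precisely what prevents division by $z^{2\nu_0}$ from creating a pole at infinity, and one recovers $\pi_\infty(\varphi)=\pi_\infty(\wh\varphi)+\nu_0\ge\nu_0$ together with the index $-(\wh\ell+2\nu_0)$.

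I expect the main obstacle to be the precise negative-square bookkeeping at the two ends simultaneously: confirming that the cancellation at $0$ lowers $\kappa$ by exactly $\nu=\min\{\kappa_0,\nu_0\}$ while the compensating drop of $\pi_\infty$ at $\infty$ leaves the integrability exponent $\ell-2\pi_\infty$ invariant. Closely related is the careful treatment of the behaviour at infinity — the possible linear term in $\varphi_0$ and the borderline situation $\nu_0=\pi_\infty(\varphi)$ — which is exactly where the hypotheses $\nu_0\le\pi_\infty(\varphi)$ (forward) and $\kappa_\infty(\wh\varphi)=0$ (converse) must be invoked.
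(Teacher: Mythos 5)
Your proposal is correct and follows essentially the same route as the paper's own proof: the subclass index $-(\ell-2\nu_0)$ is obtained from the canonical factorization of Theorem~\ref{thm:2.1} together with the integrability criterion of Theorem~\ref{subcl} (the measure $\rho$ is unchanged and the exponent $\ell-2\pi_\infty$ is invariant), and the drop $\kappa\to\kappa-\nu$ comes from the same case analysis of the generalized pole of nonpositive type at $0$ combined with the Kre\u{\i}n--Langer multiplicity count of Remark~\ref{rem:2.1}. The only cosmetic difference is that you phrase the negative-square bookkeeping as a degree count for the cancelled rational factor (reading Theorem~\ref{thm:2.1} in the converse direction), but you back this with exactly the pole-absorption argument via Remark~\ref{rem:2.1} that the paper itself uses, so the two proofs coincide in substance.
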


\begin{proof}
1) Let  $\varphi\in \mathbf{N}_{\kappa,-\ell}$. Due to
Theorem~\ref{subcl} $\varphi$ admits the canonical
factorization~\eqref{CanF1}, where the measure $\rho$ in the
integral representation~\eqref{IntR} satisfies the
condition~\eqref{Nlint}. It follows from~\eqref{CanF1}
and~\eqref{eq:nu} that $\wh\varphi$ admits the canonical
factorization
\[
\wh\varphi=\frac{\wh p(\lambda)\wh p^\#(\lambda)}{\wh q(\lambda)\wh
q^\#(\lambda)}\varphi_0(\lambda),
\]
with the same function $\varphi_0\in \mathbf{N}_0$ and
\[
\pi_\infty(\wh\varphi)=\deg \wh q-\deg\wh
p=\pi_\infty(\varphi)-\nu_0.
\]
Hence, the condition~\eqref{Nlint} takes the form
\begin{equation}
\label{Nlin} \int_{\dR} \,(1+|t|)^{\wh\ell-2\pi_\infty(\wh\varphi)},
d\rho(t)<\infty.
\end{equation}
where $\wh\ell=\ell-2\nu_0$. Due to Theorem~\ref{subcl} the latter
condition is equivalent to the inclusion
\[
\wh\varphi\in \mathbf{N}_{\wh\kappa,-(\ell-2\nu_0)}
\]
with $\wh\kappa\le \kappa$.

2) Since $\nu_0\le \pi_\infty(\varphi)$ then neither $\varphi$ nor
$\wh\varphi$ has a GPNT at $\infty$. Assume that
$\kappa_0(\varphi)>\nu_0$. Then both $\varphi$ and $\wh\varphi$ have
GPNTs at $0$ and $\kappa_0(\varphi)=\kappa_0(\wh\varphi)+\nu_0$.
Counting the total pole multiplicities of $\wh\varphi$ one obtains
$\wh\varphi\in\mathbf{N}_{\kappa-\nu_0}$ by Remark~\ref{rem:2.1}.

Assume that $0<\kappa_0(\varphi)\le\nu_0$. Then $\wh\varphi$ has no
GPNT at $0$ and $\varphi$ has a GPNT at $0$ of multiplicity
$\kappa_0(\varphi)$. Therefore,
$\wh\varphi\in\mathbf{N}_{\kappa-\kappa_0(\varphi)}$.

And finally, if $\kappa_0(\varphi)= 0$ and $\pi_0(\varphi)\ge 0$,
then neither $\wh\varphi$ nor $\varphi$ have a GPNT at $0$ and thus
$\wh\varphi\in\mathbf{N}_{\kappa}$. All the above statements are
easily reversed.
\end{proof}

\subsection{Toeplitz matrices}
A sequence $({\bf c},n):=(c_0,\ldots, c_n)$ of (real or complex)
numbers determines an upper triangular Toeplitz matrix
$T(c_0,\ldots, c_n)$ of order $(n+1)\times(n+1)$ with entries
$t_{i,j}=c_{j-i}$ for $i\le j$ and $t_{i,j}=0$ for $i>j$.
\begin{equation}\label{TSid1}
T(c_m,\ldots, c_j)=\left(
 \begin{array}{cccccc}
         c_m &  \dots      & c_j \\
          & \ddots & \vdots \\
       &  & c_{m} \
 \end{array} \right), \quad 0\le m<j\le n.
\end{equation}
Clearly, $T(c_m,\ldots, c_j)^*= J_{j-m+1} T(\bar c_m,\ldots, \bar
c_j)J_{j-m+1},$ where
\begin{equation}\label{TSid2}
 J_{j-m+1}=
\begin{pmatrix}
  {\bf 0} &  & 1 \\
          & \adots &  \\
    1 & & {\bf 0}
\end{pmatrix}.
\end{equation}
In particular, the coefficients of the asymptotic expansions
\begin{equation}\label{AsExp}
\begin{split}
   c(\lambda)&=
c_0+\frac{c_{1}}{\lambda}+\dots+\frac{c_{n}}{\lambda^{n}}
+o\left(\frac{1}{\lambda^{n}}\right), \quad\lambda\widehat{\rightarrow}\infty;\\
d(\lambda)&=
d_0+\frac{d_{1}}{\lambda}+\dots+\frac{d_{n}}{\lambda^{n}}
+o\left(\frac{1}{\lambda^{n}}\right),
\quad\lambda\widehat{\rightarrow}\infty.
\end{split}
\end{equation}
determine the Toeplitz matrices
\begin{equation}\label{Toepl}
T(c_0,\ldots, c_n)
   =\left(%
\begin{array}{ccc}
  c_0 & \dots & c_n \\
   & \ddots & \vdots \\
   &  & c_0 \\
\end{array}%
\right), \,\,%\quad
T(d_0,\ldots, d_n)=\left(%
\begin{array}{ccc}
  d_0 & \dots & d_n \\
   & \ddots & \vdots \\
   &  & d_0 \\
\end{array}%
\right).
\end{equation}

\begin{lem}\label{lem:2.3}
Let the functions $c$ and $d$ (meromorphic on $\dC\setminus\dR$)
have the asymptotic expansions \eqref{AsExp} and let
$a(\lambda)=c(\lambda)d(\lambda)$ have the asymptotic expansion
\[
a(\lambda)=
a_0+\frac{a_{1}}{\lambda}+\dots+\frac{a_{n}}{\lambda^{n}}
+o\left(\frac{1}{\lambda^{n}}\right),
\quad\lambda\widehat{\rightarrow}\infty.
\]
 Then the first $n+1$ coefficients of the
asymptotic expansion of $a(\lambda)$ can be found by
 \begin{equation}\label{cdExp}
 T(a_0,\ldots, a_n)= T(c_0,\ldots, c_n) T(d_0,\ldots, d_n).
\end{equation}
\end{lem}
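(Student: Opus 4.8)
The plan is to reduce the matrix identity~\eqref{cdExp} to the Cauchy product formula for the coefficients of a product of two truncated series in $1/\lambda$, and then to observe that this coefficient formula is precisely the rule for multiplying upper triangular Toeplitz matrices. First I would set $u=1/\lambda$ and rewrite the hypotheses as $c=\sum_{k=0}^n c_ku^k+o(u^n)$ and $d=\sum_{k=0}^n d_ku^k+o(u^n)$ as $\lambda\wh\to\infty$. Multiplying these and collecting powers of $u$, the product of the two polynomial parts equals $\sum_{m=0}^{2n}(\sum_{k=0}^m c_kd_{m-k})u^m$, while each product of a (bounded) polynomial part with an $o(u^n)$ remainder is again $o(u^n)$, and the product of the two remainders is $o(u^{2n})$. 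Discarding the powers $u^m$ with $m>n$, which are $O(u^{n+1})=o(u^n)$, and using the uniqueness of the asymptotic expansion of $a$, one reads off $a_m=\sum_{k=0}^m c_kd_{m-k}$ for $0\le m\le n$.

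Second I would compute the $(i,j)$ entry of the matrix product $T(c_0,\ldots,c_n)T(d_0,\ldots,d_n)$ directly from the definition $t_{i,j}=c_{j-i}$ for $i\le j$ and $t_{i,j}=0$ for $i>j$. For $j\ge i$ this entry is $\sum_{k=i}^{j}c_{k-i}d_{j-k}$, and the substitution $p=k-i$ turns it into $\sum_{p=0}^{j-i}c_pd_{j-i-p}$, an expression depending on $i$ and $j$ only through the difference $j-i$; by the first step it therefore equals $a_{j-i}$. Since $a_{j-i}$ is exactly the $(i,j)$ entry of $T(a_0,\ldots,a_n)$ (and both products vanish below the diagonal), the identity~\eqref{cdExp} follows at once.

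Neither step presents a genuine obstacle; the computation is in essence the equivalence between the Cauchy product and Toeplitz multiplication. The only point requiring a little care is the first one, namely checking that multiplying two expansions each carrying an $o(u^n)$ remainder leaves the first $n+1$ coefficients untouched and contributes only an $o(u^n)$ error overall. This is what guarantees that truncating the Cauchy product at order $n$ genuinely recovers $a_0,\ldots,a_n$ rather than merely approximating them, and it relies on the polynomial parts remaining bounded as $\lambda\wh\to\infty$.
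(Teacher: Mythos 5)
Your proof is correct: the identity $a_m=\sum_{k=0}^m c_kd_{m-k}$ for $0\le m\le n$, obtained by multiplying the truncated expansions and absorbing all error terms into $o(1/\lambda^n)$, together with the observation that upper triangular Toeplitz multiplication is exactly this convolution, is precisely the standard argument the paper relies on (the paper states Lemma~\ref{lem:2.3} without proof, treating it as elementary). Your care with the remainder terms — polynomial part times $o(1/\lambda^n)$ stays $o(1/\lambda^n)$ since the polynomial parts are bounded as $\lambda\wh\to\infty$ — is exactly the point that makes the truncation legitimate.
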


\begin{lem}\label{polylem}
The formula
\begin{equation}\label{eq:2.5a}
p(\lambda)=\frac{1}{\det S_{m}}\det \left(
\begin{array}{cccc}
               &              &  s_{m}&  s_{m+1} \\
               &  \adots      &\adots &   \vdots       \\
         s_{m} &  s_{m+1}     & \dots &  s_{2m+1} \\
             1 & \lambda & \dots & \lambda^{m+1}
\end{array}
\right),
\end{equation}
where $s_{j}=0, j<m$, $s_m\neq 0$ ($m\ge 0$), and $S_m$ is as in
\eqref{0.3}, defines a monic polynomial $p(\lambda)=\sum_{j=0}^{m+1}
p_j\lambda^j$ of degree $m+1$ whose coefficients $p_j$, $1\le j\le
m+1$, satisfy the matrix equality
\begin{equation}\label{eq:4.5b}
 T(p_{m+1},\dots, p_{1}) \, T(s_{m},\dots,s_{2m})
 =s_{m}I_{m+1}\quad (p_{m+1}=1)
\end{equation}
for an arbitrary real number $s_{2m+1}$.
\end{lem}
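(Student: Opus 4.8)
The plan is to extract everything from a single cofactor expansion of the determinant in \eqref{eq:2.5a} along its bottom row $(1,\lambda,\dots,\lambda^{m+1})$. Write $M(\lambda)$ for the $(m+2)\times(m+2)$ matrix appearing there; under the convention $s_j=0$ for $j<m$, its first $m+1$ rows form the Hankel block with $(i,k)$-entry $s_{i+k}$ for $0\le i\le m$ and $0\le k\le m+1$ (so row $i$ reads $s_i,s_{i+1},\dots,s_{i+m+1}$). Expanding $\det M(\lambda)$ along the last row gives $\det M(\lambda)=\sum_{k=0}^{m+1}\lambda^k c_k$, where $c_k$ is the signed cofactor of the entry $\lambda^k$. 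For the top coefficient one deletes the last row and the last column, which leaves precisely the Hankel matrix $S_m=(s_{i+k})_{i,k=0}^m$, and the attached sign is $(-1)^{(m+2)+(m+2)}=+1$; hence $c_{m+1}=\det S_m$. Since $s_m\neq0$ and $S_m$ is anti-triangular, $\det S_m\neq0$, so dividing by it shows that $p$ is monic of degree $m+1$ with $p_k=c_k/\det S_m$.

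Next I would produce the linear relations satisfied by the coefficients. For each $i=0,\dots,m$, replace the bottom row of $M(\lambda)$ by the $i$-th Hankel row $(s_i,s_{i+1},\dots,s_{i+m+1})$ and call the result $M_i$. This row already occurs among the first $m+1$ rows of $M_i$, so $\det M_i=0$; on the other hand the cofactors along the bottom row are unaffected by the replacement, so expanding along it gives $\det M_i=\sum_{k=0}^{m+1}s_{i+k}c_k=\det S_m\sum_{k=0}^{m+1}s_{i+k}p_k$. As $\det S_m\neq0$ this yields the relations
\[
 \sum_{k=0}^{m+1}s_{i+k}\,p_k=0,\qquad i=0,1,\dots,m.
\]
Because $s_j=0$ for $j<m$, the coefficient $p_0$ drops out whenever $i\le m-1$, and $s_{2m+1}$ enters only the single relation $i=m$, which merely fixes $p_0$ and never appears in \eqref{eq:4.5b}; this is exactly why \eqref{eq:4.5b} holds for an arbitrary value of $s_{2m+1}$.

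Finally I would recognize these relations as the entrywise form of the Toeplitz identity. The product $T(p_{m+1},\dots,p_1)\,T(s_m,\dots,s_{2m})$ of two upper triangular Toeplitz matrices is again upper triangular Toeplitz, and by Lemma~\ref{lem:2.3} (or by a direct diagonal count) its entry on the $d$-th superdiagonal equals $\sum_{a=0}^{d}p_{m+1-a}s_{m+d-a}$ for $d=0,1,\dots,m$. For $d=0$ this is $p_{m+1}s_m=s_m$, while for $1\le d\le m$ the substitution $k=m+1-a$ identifies it with the left-hand side of the relation indexed by $i=d-1\le m-1$ (the omitted terms carry indices $s_{<m}=0$), so it vanishes. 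Hence the product equals $s_mI_{m+1}$, which is \eqref{eq:4.5b}.

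The routine part here is the computation; the part needing care is bookkeeping. I expect the main obstacle to be twofold: first, pinning down the Hankel indexing of the block in \eqref{eq:2.5a} together with the cofactor signs, so as to be certain the leading coefficient is exactly $+\det S_m$ (so that $p$ is genuinely monic rather than $\pm$ monic); and second, matching the superdiagonals of the Toeplitz product to the relations above under the reindexing $d=i+1$, while keeping track of which relation alone carries $s_{2m+1}$. Once these indices are aligned, no further analytic input is required.
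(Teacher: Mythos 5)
Your proof is correct and follows essentially the same route as the paper's: expansion along the bottom row gives monicity, and replacing that row by the Hankel rows (the paper phrases this as substituting $\lambda^j\mapsto s_{j+k-1}$) produces the vanishing relations that are then read off as the entries of \eqref{eq:4.5b}, with the same observation that $s_{2m+1}$ only enters the relation fixing $p_0$. The sole cosmetic difference is that the paper obtains the diagonal normalization $p_{m+1}s_m=s_m$ from the $k=0$ substitution, whereas you get it directly from monicity.
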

\begin{proof}
Evaluating the determinant in \eqref{eq:2.5a} with respect to the
last row shows immediately that $p(\lambda)$ is a monic polynomial
of degree $m+1$. To see that the coefficients $p_j$ of $p(\lambda)$
in \eqref{eq:2.5a} satisfy \eqref{eq:4.5b} substitute $\lambda^j$ by
$s_{j+k-1}$ for $j=0,1,\ldots,m+1$ ($s_{-1}=0$) in the formula
\eqref{eq:2.5a}. Then for $k=0$ the evaluation of the determinant in
\eqref{eq:2.5a} yields the equality
\[
 \Sigma_{j=1}^{m+1} p_js_{j-1}=s_m,
\]
and for $k=1,\ldots,m$ one obtains
\[
 \Sigma_{j=1}^{m+1} p_js_{j+k-1}=0.
\]
This means that the polynomial $p$ defined by \eqref{eq:2.5a}
automatically satisfies~\eqref{eq:4.5b} for arbitrary
$s_{2m+1}\in\dR$. Note that $s_{2m+1}$ only appears in the constant
coefficient $p_0$ of $p(\lambda)$, which can be seen e.g. by
evaluating the determinant in \eqref{eq:2.5a} with respect to the
last column.
\end{proof}

\subsection{Asymptotic expansions of certain fractional transforms}
In the next lemma the polynomial $p(\lambda)$ defined in
Lemma~\ref{polylem} (see \eqref{eq:2.5a}) appears when inverting an
associated asymptotic expansion.

\begin{lem}\label{cor:2.4}
Let $({\bf s},\ell)=(s_j)_{j=0}^\ell$ be a sequence of real numbers
such that $s_{j}=0$, $j<m$, and $s_m\neq 0$, $\ell\ge 2m$, and let
the monic polynomial $p(\lambda)=\sum_{j=0}^{m+1} p_j\lambda^j$ be
defined by \eqref{eq:2.5a}. Then a function $\varphi$ (meromorphic
on $\dC\setminus\dR$) admits the asymptotic expansion
\begin{equation}\label{eq:2.4}
 \varphi(\lambda)=
 -\frac{s_{m}}{\lambda^{m+1}}-\dots-\frac{s_{\ell}}{\lambda^{\ell+1}}
  +o\left(\frac{1}{\lambda^{\ell+1}}\right),
\end{equation}
if and only if the function $-s_{m}/\varphi(\lambda)$ admits the
asymptotic expansion
\begin{equation} \label{eq:2.4a}
-s_{m}/\varphi(\lambda) =  p(\lambda)+\varepsilon \tau(\lambda)\quad
\lambda \wh \to \infty,
\end{equation}
where $\varepsilon=\sgn s_m$ and $\tau(\lambda)$ satisfies one of
the following conditions:
\begin{enumerate}\def\labelenumi{(\roman{enumi})}
\item if $\ell=2m$ then $\tau(\lambda)=o(\lambda)$, $\lambda \wh \to
\infty$, and in \eqref{eq:2.5a} $s_{2m+1}$ can be an arbitrary real
number;

\item if $\ell=2m+1$ then $\tau(\lambda)=o(1)$ as $\lambda \wh \to \infty$;

\item if $\ell>2m+1$ then $\tau(\lambda)$ has the asymptotic expansion
\begin{equation} \label{eq:2.4E}
 \tau(\lambda)=
 -\frac{\wh s_{0}}{\lambda}-\dots-\frac{\wh s_{\ell-2m-2}}{\lambda^{\ell-2m-1}}
  +o\left(\frac{1}{\lambda^{\ell-2m-1}}\right), \quad \lambda \wh \to \infty,
\end{equation}
where the sequence $(\widehat{\bf s},\ell-2m-2)$ is determined by
the matrix equation
\[%begin{equation}\label{eq:2.4F}
     T(p_{m+1},\ldots,p_0,-\varepsilon \wh s_0,\ldots,-\varepsilon
 \wh s_{\ell-2m-2}) \, T(s_m,\dots,s_\ell) =s_m I_{\ell-m+1}.
\]%end{equation}
\end{enumerate}
\end{lem}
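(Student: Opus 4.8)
The plan is to reduce everything to the algebra of Toeplitz matrices governing multiplication of asymptotic series, as packaged in Lemma~\ref{lem:2.3}, and then to read off the three cases by matching the orders of the remainder terms. First I would observe that the hypothesis $s_j=0$ for $j<m$ and $s_m\neq 0$ means that the expansion~\eqref{eq:2.4} can be written as $\varphi(\lambda)=-\lambda^{-(m+1)}\,c(\lambda)$, where $c$ has a genuine asymptotic expansion $c(\lambda)=s_m+s_{m+1}/\lambda+\dots+s_\ell/\lambda^{\ell-m}+o(\lambda^{-(\ell-m)})$ starting with the nonzero constant $s_m$. Thus $c$ is invertible near $\infty$ in the algebra of such expansions, and $-s_m/\varphi(\lambda)=\lambda^{m+1}\,s_m/c(\lambda)$ has an expansion whose leading behaviour is $\lambda^{m+1}$ times a convergent-order power series in $1/\lambda$. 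The core computation is to identify this reciprocal series.

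The key step is to invert $c$ using the Toeplitz formalism: by Lemma~\ref{lem:2.3}, if $d(\lambda)$ is the expansion of $s_m/c(\lambda)$, then the finite Toeplitz product $T(d_0,\dots,d_k)\,T(s_m,\dots,s_{m+k})=s_m I_{k+1}$ determines the coefficients $d_0,\dots,d_k$ recursively, and $d_0=1$. Now Lemma~\ref{polylem} tells me precisely that the polynomial $p$ defined by~\eqref{eq:2.5a} has coefficients satisfying $T(p_{m+1},\dots,p_1)\,T(s_m,\dots,s_{2m})=s_m I_{m+1}$; comparing this with the leading $(m+1)\times(m+1)$ block of the inversion identity shows that the top $m+1$ coefficients of $\lambda^{m+1}\,s_m/c(\lambda)$ are exactly $p_{m+1},\dots,p_1$, i.e. the nonconstant part of $p(\lambda)$. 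Hence $-s_m/\varphi(\lambda)=p_{m+1}\lambda^{m+1}+\dots+p_1\lambda+(\text{lower order})$, and after subtracting $p(\lambda)$ the difference is $\varepsilon\tau(\lambda)$ with $\varepsilon=\sgn s_m$ absorbing the sign, as in~\eqref{eq:2.4a}. The three cases then differ only in how many further coefficients are pinned down by~\eqref{eq:2.4}.

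For the case analysis I would argue as follows. In case (i), $\ell=2m$, the expansion~\eqref{eq:2.4} determines $c$ only up to order $\lambda^{-m}$, so only $p_{m+1},\dots,p_1$ are forced; the constant term $p_0$ (equivalently, the value $s_{2m+1}$, which by Lemma~\ref{polylem} enters $p$ only through $p_0$) is free, and the remainder $\tau$ is merely $o(\lambda)$, giving the stated freedom in $s_{2m+1}$. In case (ii), $\ell=2m+1$, one further coefficient is determined, which fixes $p_0$ and forces $\tau=o(1)$. In case (iii), $\ell>2m+1$, the expansion reaches order $\lambda^{-(\ell-m)}$, so after removing all of $p(\lambda)$ the remainder $\tau$ has the genuine expansion~\eqref{eq:2.4E}; to get the displayed matrix equation for $(\widehat{\bf s},\ell-2m-2)$ I would apply Lemma~\ref{lem:2.3} once more to the product $(-s_m/\varphi)\cdot\varphi=-s_m$, writing the left factor as $p(\lambda)+\varepsilon\tau(\lambda)$ expanded as the series with coefficients $p_{m+1},\dots,p_0,-\varepsilon\widehat s_0,\dots$, and the right factor as the $\varphi$-series with coefficients $s_m,\dots,s_\ell$; the product being the constant $s_m$ yields $T(\dots)\,T(s_m,\dots,s_\ell)=s_m I_{\ell-m+1}$. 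Each implication reverses because the Toeplitz identities are equivalences, so the biconditional holds in all three cases.

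The main obstacle I anticipate is bookkeeping the index ranges and the placement of $p_0$: I must verify that $s_{2m+1}$ genuinely affects only $p_0$ (this is exactly the last remark in the proof of Lemma~\ref{polylem}, obtained by expanding~\eqref{eq:2.5a} along the last column), and that the off-by-one shifts between the degree-$(m+1)$ polynomial $p$ and the series indexing in~\eqref{eq:2.4}--\eqref{eq:2.4E} line up so that the single Toeplitz identity in case (iii) simultaneously recovers the coefficients $p_{m+1},\dots,p_0$ and defines $\widehat s_0,\widehat s_1,\dots$. Getting the boundary between the ``polynomial part'' and the ``$\tau$ part'' exactly right, together with the sign $\varepsilon$, is the delicate point; the rest is the routine equivalence between asymptotic multiplication and triangular Toeplitz multiplication supplied by Lemma~\ref{lem:2.3}.
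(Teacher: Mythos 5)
Your proposal is correct and follows essentially the same route as the paper's own proof: both reduce \eqref{eq:2.4} to the invertibility of the expansion $c(\lambda)=-\lambda^{m+1}\varphi(\lambda)$, apply Lemma~\ref{lem:2.3} to the product identity $\bigl((p(\lambda)+\varepsilon\tau(\lambda))/\lambda^{m+1}\bigr)\bigl(-\lambda^{m+1}\varphi(\lambda)\bigr)=s_m$ to get the Toeplitz equation $T(p_{m+1},\dots,p_{2m-\ell+1})\,T(s_m,\dots,s_\ell)=s_m I_{\ell-m+1}$, identify the polynomial part with the $p$ of Lemma~\ref{polylem} via \eqref{eq:4.5b}, and then read off the three cases (including the freedom of $s_{2m+1}$ through $p_0$ alone, and $\wh s_j=-\varepsilon p_{-j-1}$ in case (iii)) by matching remainder orders.
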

\begin{proof}
It is clear that the function $\varphi$ admits the asymptotic
expansion \eqref{eq:2.4} if and only if
$c(\lambda):=-\lambda^{m+1}\varphi(\lambda)$ admits the asymptotic
expansion of the form \eqref{AsExp} with $n=\ell-m$ and, moreover,
by standard inversion of expansions, this is equivalent for
$d(\lambda):=1/c(\lambda)$ to admit the asymptotic expansion of the
form \eqref{AsExp} with $n=\ell-m$. Now by substituting the
expansions for the terms in the formula
\begin{equation}\label{eqprod1}
 \left(\frac{p(\lambda)+\varepsilon\tau(\lambda)}{\lambda^{m+1}}\right)
  \left(-\lambda^{m+1}\varphi(\lambda)\right)=s_m
\end{equation}
and applying Lemma~\ref{lem:2.3} it is seen that
\begin{equation}\label{eqprod2}
 \frac{p(\lambda)+\varepsilon\tau(\lambda)}{\lambda^{m+1}}
 =p_{m+1}+\frac{p_m}{\lambda}+\dots + \frac{p_{2m-\ell+1}}{\lambda^{\ell-m}}
 +o\left(\frac{1}{\lambda^{\ell-m}}\right), \quad \lambda \wh \to
 \infty,
\end{equation}
where the coefficients $p_j$ are determined by the matrix equation
\begin{equation}\label{eqprod3}
 T(p_{m+1},\dots,p_{2m-\ell+1}) \, T(s_m,\dots,s_\ell)=s_m I_{\ell-m+1}.
\end{equation}
In particular, $p(\lambda)$ appearing in \eqref{eq:2.4a} is a
polynomial of degree $m+1$ whose coefficients satisfy the matrix
equality \eqref{eq:4.5b} when $\ell\ge 2m$. Hence, $p(\lambda)$ in
\eqref{eq:2.4a} can be taken to be the polynomial defined by
\eqref{eq:2.5a} in Lemma~\ref{polylem}.

(i) If $\ell=2m$ then the formula \eqref{eqprod2} shows that the
function $\tau(\lambda)$ in \eqref{eq:2.4a} satisfies
$\tau(\lambda)=o(\lambda)$ as $\lambda \wh \to \infty$, independent
from the selection of the real number $s_{2m+1}$ in \eqref{eq:2.5a}.

(ii) If $\ell=2m+1$ then the formula \eqref{eqprod2} shows that
$\tau(\lambda)=o(1)$ as $\lambda \wh \to \infty$.

(iii) If $\ell>2m+1$ then the formula \eqref{eqprod2} shows that the
function $\tau(\lambda)$ in \eqref{eq:2.4a} has the asymptotic
expansion \eqref{eq:2.4E} with $\wh s_j=-\varepsilon p_{-j-1}$,
$j=0,\dots,\ell-2m-2$, with coefficients $p_j$ as in
\eqref{eqprod3}.
\end{proof}

Associate with the expansion \eqref{eq:2.4} the $(m+1)\times(m+1)$
matrix $S_m$ as in \eqref{0.3}. Then
\begin{equation}\label{TSid3}
 S_m=
 \left(
 \begin{array}{ccccccc}
        &        & s_m \\
        & \adots & \vdots \\
    s_m & \dots & s_{2m} \
 \end{array}
 \right)=J_{m+1} T(s_m, \dots,  s_{2m}),
\end{equation}
where $T({\bf s},m,2m)$ and $J_{m+1}$ are as in \eqref{TSid1} and
\eqref{TSid2}. Let the monic polynomial $p(\lambda)$ of degree $m+1$
be defined by the formula~\eqref{eq:2.5a}, where $s_{2m+1}$ is an
arbitrary real number in the case $\ell=2m$. Then it follows from
\eqref{eq:4.5b} that
\[
T(p_{m+1},\dots,p_1)J_{m+1}=s_mS_m^{-1}
\]
and hence
\begin{equation}
\label{negsq0a}
 s_m\frac{p(\lambda)-p(\bar\omega)}{\lambda-\bar\omega}=
 \left(
 \begin{array}{ccc}
      1 & \dots   & \lambda^m   \\
  \end{array}
 \right)
 s_m^2 S_m^{-1}
 \left(
 \begin{array}{c}
     1 \\ \vdots \\  \bar\omega^m
  \end{array}
 \right),
\end{equation}
so that $s_{m} p \in \mathbf{N}_{\nu_-(S_m)}$ (as $s_m\neq 0$),
i.e., the negative index of the generalized Nevanlinna function $s_m
p(\lambda)$ is equal to $\nu_-(S_m)$, the number of negative
eigenvalues of the matrix $S_m$.

When the function  $\varphi(\lambda)$ in \eqref{eq:2.4} is a
generalized Nevanlinna function the statements in the previous lemma
can be specified further. The next result shows how the classes
$\mathbf{N}_{\kappa,-\ell}$ behave under linear fraction transforms;
for this it suffices to consider the transform $\varphi(\lambda)\to
-1/\varphi(\lambda)$; as in Lemma~\ref{cor:2.4} the result is
expressed via the function $\tau(\lambda)$ which will appear in
later sections, too.

\begin{lem}\label{lem:2.15}
Let the notations and assumptions be as in Lemma~\ref{cor:2.4}. Then
the following assertions hold:
\begin{enumerate}\def\labelenumi{(\roman{enumi})}
\item $\varphi \in \mathbf{N}_\kappa$ if and
only if $\tau \in \mathbf{N}_{\kappa-\nu_-(S_m)}$;
\item $\varphi \in \mathbf{N}_{\kappa,-2m}$ if and only if
$\tau \in \mathbf{N}_{\kappa-\nu_-(S_m)}$ and
$\tau(\lambda)=o(\lambda)$ as $\lambda \wh \to \infty$;

\item $\varphi \in \mathbf{N}_{\kappa,-2m-1}$ if and only if
$\tau\in \mathbf{N}_{\kappa-\nu_-(S_m),1}$ and $\tau(\lambda)=o(1)$
as $\lambda \wh \to \infty$;

\item $\varphi \in \mathbf{N}_{\kappa,-\ell}$ with $\ell>2m+1$ if and only if
$\tau \in \mathbf{N}_{\kappa-\nu_-(S_m),-(\ell-2m-2)}$ and
$\tau(\lambda)=o(1)$ as $\lambda \wh \to \infty$.
\end{enumerate}
\end{lem}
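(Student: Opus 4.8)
The whole lemma rests on a single identity: multiplying \eqref{eq:2.4a} by $\varepsilon=\sgn s_m$ and using $\varepsilon s_m=|s_m|$ yields
\[
 h(\lambda):=\varepsilon p(\lambda)+\tau(\lambda)=-\frac{|s_m|}{\varphi(\lambda)}.
\]
By \eqref{negsq0a} one has $s_m p\in\mathbf{N}_{\nu_-(S_m)}$, and dividing by $|s_m|>0$ preserves the class, so $\varepsilon p\in\mathbf{N}_{\nu_-(S_m)}$; being a polynomial of degree $m+1$ it has no poles in $\dC_+$ and no finite generalized poles, its only generalized pole of nonpositive type sitting at $\infty$ with $\kappa_\infty(\varepsilon p)=\nu_-(S_m)$. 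To prove (i), observe that $\mathbf{N}_\kappa$ is invariant under $\psi\mapsto-1/\psi$ and under multiplication by the positive constant $|s_m|$, so $\varphi\in\mathbf{N}_\kappa$ iff $h\in\mathbf{N}_\kappa$; it remains to relate $h$ and $\tau$. In every case of Lemma~\ref{cor:2.4} one has $\tau(\lambda)=o(\lambda)$, whence \eqref{infgpol} forces $\kappa_\infty(\tau)=0$, so $\varepsilon p$ and $\tau$ share no poles in $\dC_+$ and no generalized poles of nonpositive type. The forward direction is then the additivity in Remark~\ref{rem:2.1}: $\varepsilon p\in\mathbf{N}_{\nu_-(S_m)}$ and $\tau\in\mathbf{N}_{\kappa-\nu_-(S_m)}$ sum to $h\in\mathbf{N}_\kappa$. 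For the converse, the Nevanlinna kernel of $-\varepsilon p$ is the negative of that of $\varepsilon p$, hence has $\nu_+(S_m)=m+1-\nu_-(S_m)$ negative squares, so $\tau=h-\varepsilon p$ is again a generalized Nevanlinna function; the Kre\u\i n--Langer multiplicity count of Remark~\ref{rem:2.1} then gives $\kappa=\kappa_\tau+\nu_-(S_m)$, since $h$ and $\tau$ have the same poles in $\dC_+$ and the same finite generalized poles (adding the entire function $\varepsilon p$ changes nothing at finite points) while $\kappa_\infty(h)=\nu_-(S_m)$ and $\kappa_\infty(\tau)=0$.

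For (ii) and the even instance of (iv) no integrability enters. When the relevant order is even, Remark~\ref{rem:Nkl} characterizes the subclass as membership in $\mathbf{N}_\kappa$ together with the corresponding asymptotic expansion. For $\ell=2m$ this expansion is \eqref{eq:2.4}, equivalent by Lemma~\ref{cor:2.4}(i) to $\tau(\lambda)=o(\lambda)$; combined with (i) this is exactly (ii). If in (iv) the order $\ell-2m-2$ is even, the expansion \eqref{eq:2.4E} of $\tau$ furnished by Lemma~\ref{cor:2.4}(iii) is of the form \eqref{eq:asNkl} of even order, and Remark~\ref{rem:Nkl} together with (i) places $\tau$ in $\mathbf{N}_{\kappa-\nu_-(S_m),-(\ell-2m-2)}$, and conversely.

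For (iii) and the odd instance of (iv), part (i) again supplies the index and Lemma~\ref{cor:2.4}(ii),(iii) supplies $\tau(\lambda)=o(1)$, so only the integrability condition distinguishing $\mathbf{N}_{\kappa,-2m-1}$ (Definition~\ref{def:Nkl}, via $\varphi^{[2m+2]}\in\mathbf{N}_{\kappa',1}$) from the plain class must be matched with $\int_\eta^\infty y^{-1}|\IM\tau(iy)|\,dy<\infty$. The plan is to expand $\varphi=-s_m/(p+\varepsilon\tau)=-s_m/p+(\varepsilon s_m/p^2)\tau+\cdots$ and to observe that along $\lambda=iy$ one has $|p+\varepsilon\tau|^2\sim|p|^2$ because $\tau=o(1)$, so the tail $\varphi^{[2m+2]}$ equals a fixed nonzero multiple of $\tau$ plus the tail of the rational function $-s_m/p$; the latter has a representing measure consisting of finitely many point masses and hence lies in every subclass, contributing nothing to the integrability. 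This identifies $\int_\eta^\infty y^{-1}|\IM\varphi^{[2m+2]}(iy)|\,dy$ with $\int_\eta^\infty y^{-1}|\IM\tau(iy)|\,dy$ up to a finite error, giving $\varphi\in\mathbf{N}_{\kappa,-2m-1}\iff\tau\in\mathbf{N}_{\kappa-\nu_-(S_m),1}$; the same comparison against the higher-order tail, read through Theorem~\ref{subcl}, disposes of the odd subcase of (iv). Remark~\ref{rem:2.6}, the invariance of $\mathbf{N}_{\kappa,1}$ under $-1/\cdot$ at a nonzero limit, is the structural fact underlying this passage.

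The main obstacle is precisely this last integrability comparison. Parts (i), (ii) and the even half of (iv) are formal consequences of the additivity and counting in Remark~\ref{rem:2.1}, the expansion dictionary of Lemma~\ref{cor:2.4}, and the even-order characterization of Remark~\ref{rem:Nkl}; the genuine analytic work is confined to verifying that the polynomial and rational pieces introduced by the transform $\varphi\mapsto-s_m/\varphi$ leave the ``$\mathbf{N}_{\cdot,1}$-level'' integral condition unchanged.
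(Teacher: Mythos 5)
Your parts (i), (ii) and the even half of (iv) are correct. Your proof of (i) is in fact a genuinely different (and self-contained) route: where the paper simply quotes \cite[Theorem~4.1, Corollary~4.2]{DHS2001} for the additivity $\kappa(-1/\varphi)=\nu_-(S_m)+\kappa(\tau)$, you obtain it from the Kre\u{\i}n--Langer counting in Remark~\ref{rem:2.1}, by checking that all the negativity of $\varepsilon p$ sits at $\infty$ with $\kappa_\infty(\varepsilon p)=\nu_-(S_m)$, that $\tau=o(\lambda)$ forces $\kappa_\infty(\tau)=0$, and that $h=\varepsilon p+\tau$ and $\tau$ share all finite poles and generalized poles of nonpositive type; the converse via $-\varepsilon p\in\mathbf{N}_{\nu_+(S_m)}$ and pole counting is also sound. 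Parts (ii) and even-(iv), read through Remark~\ref{rem:Nkl} and Lemma~\ref{cor:2.4}, coincide with what the paper does.

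The gap is in (iii) and the odd half of (iv). You reduce these to matching ``only the integrability condition'' $\int_\eta^\infty y^{-1}|\IM\varphi^{[2m+2]}(iy)|\,dy<\infty$ against the corresponding integral for $\tau$. But by Definition~\ref{def:Nkl}, $\varphi\in\mathbf{N}_{\kappa,-2m-1}$ requires the tail $\varphi^{[2m+2]}$ to \emph{be a generalized Nevanlinna function of some index $\kappa'\le\kappa$} belonging to $\mathbf{N}_{\kappa',1}$. This property of the tail is not a consequence of $\varphi\in\mathbf{N}_\kappa$, the expansion, and the integral bound: the failure of precisely this implication is why $\cM_\kappa({\bf s},\ell)\subsetneq\cI_\kappa({\bf s},\ell)$ in the odd case (see Remark~\ref{rem:Nkl} and the example following Corollary~\ref{cor:5.1}). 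Your comparison is multiplicative, $\varphi^{[2m+2]}=|s_m|\,\tau\,g+R^{[2m+2]}$ with $g(z)=z^{2m+2}/\bigl(p(z)(p(z)+\varepsilon\tau(z))\bigr)\to 1$, and multiplication by the non-constant factor $g$ transfers neither the generalized Nevanlinna property nor its index in either direction; so in the backward direction of (iii) nothing certifies that $\varphi^{[2m+2]}$ is a generalized Nevanlinna function of index at most $\kappa$, and in the odd case of (iv) both directions suffer the same defect. (A secondary looseness: $|p+\varepsilon\tau|^2\sim|p|^2$ controls moduli only; since $\IM(\tau g)=\IM\tau\,\RE g+\RE\tau\,\IM g$, you also need $\IM g(iy)=O(1/y)$ to kill the cross term, which is true but must be argued.) The paper avoids multiplication altogether: from the Toeplitz identity \eqref{eqprod1}--\eqref{eqprod3} it derives the \emph{additive} tail relation \eqref{eq:2.4F}, which for $\ell=2m+1$ reads $\tau=\varepsilon D$ with $\varepsilon D=C/|s_m|+O(1/\lambda)$ and $C=\varphi^{[2m+2]}$, so membership of the two tails in $\mathbf{N}_{\cdot,1}$ (respectively $\mathbf{N}_{\cdot,0}$) is transferred through an additive $O(1/\lambda)$ perturbation, and then concludes with the inclusions \eqref{subincl}. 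To repair your argument you would have to replace the multiplicative comparison by this additive one, or else run the equivalence through the integral-representation characterization of Theorem~\ref{subcl} rather than through Definition~\ref{def:Nk1}.
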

\begin{proof}
(i) The condition $\varphi \in \mathbf{N}_\kappa$ is equivalent to
$-1/\varphi \in \mathbf{N}_\kappa$ ($\varphi\neq 0$). Since $\ell\ge
2m$ it follows from Lemma~\ref{cor:2.4} that
$\tau(\lambda)=o(\lambda)$, $\lambda \wh \to \infty$. Hence the
definition of $\tau(\lambda)$ in \eqref{eq:2.4a} and
\cite[Theorem~4.1, Corollary~4.2]{DHS2001} imply that $-1/\varphi
\in \mathbf{N}_\kappa$ if and only if $\tau(\lambda)$ is a
generalized Nevanlinna function such that its negative index
$\kappa(\tau)$ satisfies
\[
 \kappa=\kappa(-1/\varphi)=\kappa(\varepsilon
 P)+\kappa(\tau)=\nu_-(S_m)+\kappa(\tau);
\]
see \eqref{negsq0a}. Hence, $\varphi \in \mathbf{N}_\kappa$ if and
only if $\tau \in \mathbf{N}_{\kappa-\nu_-(S_m)}$.

(ii) The statement for $\ell=2m$ is obtained now directly from part
(i) of Lemma~\ref{cor:2.4}.

(iii) \& (iv) Let $\ell=2k$ or $\ell=2k-1$ with $k>m$ and rewrite
the expansion of $\varphi(\lambda)$ in \eqref{eq:2.4} as follows:
\begin{equation}\label{eq:2.4C}
 \varphi(\lambda)=
 -\frac{s_{m}}{\lambda^{m+1}}-\dots-\frac{s_{2k-1}}{\lambda^{2k}}
  +\frac{C(\lambda)}{\lambda^{2k}},\quad C(\lambda)=o\left(1\right),
  \quad \quad \lambda \wh \to
 \infty.
\end{equation}
Then by~\eqref{subincl} $\varphi\in\mathbf{N}_{\kappa,-2k+1}$ if and
only if $C\in \mathbf{N}_{\kappa',1}$ and, similarly,
$\varphi\in\mathbf{N}_{\kappa,-2k}$ if and only if $C\in
\mathbf{N}_{\kappa',0}$ for some $\kappa'\le\kappa$. Now the
expansion in \eqref{eqprod2} can be rewritten as follows
\begin{equation}\label{eq:2.4D}
 \frac{p(\lambda)+\varepsilon\tau(\lambda)}{\lambda^{m+1}}
 =p_{m+1}+\frac{p_m}{\lambda}+\dots + \frac{p_{2(m-k+1)}}{\lambda^{2k-m-1}}
 +\frac{D(\lambda)}{\lambda^{2k-m-1}}, \quad \lambda \wh \to
 \infty,
\end{equation}
where $D(\lambda)$ satisfies
\[
 D(\lambda)=\frac{p_{m+1}}{s_m}\,C(\lambda)+O\left(\frac{1}{\lambda}\right),
 \quad \lambda \wh \to \infty;
\]
compare \cite[Lemma~4.1]{HSW}. The formula \eqref{eq:2.4D} is
equivalent to the following expansion for $\tau(\lambda)$:
\begin{equation}\label{eq:2.4F}
\tau(\lambda)=\varepsilon \left(\frac{p_{-1}}{\lambda}+\dots
+\frac{p_{2(m-k+1)}}{\lambda^{2(k-m-1)}}\right) +\frac{\varepsilon
D(\lambda)}{\lambda^{2(k-m-1)}},
\end{equation}
where $\varepsilon D(\lambda)=\frac{\varepsilon
p_{m+1}}{s_m}\,C(\lambda)+O\left(\frac{1}{\lambda}\right)$ as
$\lambda \wh \to \infty$ (and for $k=m+1$ the first term in the
righthand side of \eqref{eq:2.4F} is missing). Since here
$\varepsilon D(\lambda)\in \mathbf{N}_{\kappa'',j}$ for some
$\kappa''\in\dN$ is equivalent to $C\in \mathbf{N}_{\kappa',j}$ for
$j=0,1$ ($p_{m+1}=1$), the assertion $\tau \in
\mathbf{N}_{\kappa-\nu_-(S_m),-(\ell-2m-2)}$ for the values
$\ell=2k$ and $\ell=2k-1$ with $k>m$ follows from \eqref{eq:2.4F} by
the inclusions~\eqref{subincl}.
\end{proof}

In the special case $\kappa=0$ and $m=0$ the result in
Proposition~\ref{lem:2.15} implies \cite[Theorem~4.2]{HSW}. Results
analogous to that in Proposition~\ref{lem:2.15} in the special case
where $\ell=2k$ is even can be found from
\cite[Proposition~5.4]{DHS2001}, \cite[Theorem~5.4]{DHS07}; the even
case ($\ell=2k$) is easier, since then the expansion \eqref{eq:2.4}
for $\varphi \in \mathbf{N}_{\kappa}$ is equivalent to $\varphi \in
\mathbf{N}_{\kappa,-2m}$, see e.g.
\cite[Corollaries~3.4,~3.5]{DHS07}.

\section{Basic moment and interpolation problems}\label{sec3}

In this section solutions of the so-called  basic moment and
interpolation problems will be described. The treatment is divided
into two cases: nondegenerate and degenerate problems according to
$\det S_n\neq 0$ and $\det S_n=0$, where the matrix $S_n$ is as
defined in \eqref{0.3}. The even and the odd cases of the  problems
${ MP}_\kappa({\bf s},\ell)$ and ${ IP}_\kappa({\bf s},\ell)$ will
be treated in a parallel way. In the even case ($\ell=2n$) all the
statements will be formulated only for the problem ${
MP}_\kappa({\bf s},2n)$, since the problems ${ MP}_\kappa({\bf
s},2n)$ and ${ IP}_\kappa({\bf s},2n)$ are equivalent (see
Remark~\ref{rem:Nkl}). In what follows a sequence $({\bf
s},\ell):=\{s_{i}\}_{i=0}^\ell$ is said to be \textit{normalized} if
the first nonzero element of ${\bf s}$ has modulus $1$.

\subsection{Nondegenerate basic moment and interpolation problems}
Let $n=[\ell/2]$, so that either $\ell=2n$ or $\ell=2n+1$.
Nondegenerate problems ${ MP}_\kappa({\bf s},\ell)$ and ${
IP}_\kappa({\bf s},\ell)$ are said to be \textit{basic} if the
sequence $({\bf s},\ell)$ is normalized and $\det S_j=0$ for all
$j\le n-1$, or, equivalently, if
\begin{equation}\label{eq:3.1}
 s_0=s_1=\dots=s_{n-1}=0, \quad |s_n|=1.
\end{equation}
Thus a function $\varphi \in \mathbf{N}_{\kappa}$ is a solution to
the nondegenerate basic interpolation problem ${ IP}_\kappa({\bf
s},2n)$, if it satisfies the condition
\begin{equation}\label{2.2a}
\varphi(\lambda)=
-\frac{s_{n}}{\lambda^{n+1}}-\frac{s_{n+1}}{\lambda^{n+2}}-\dots-\frac{s_{\ell}}{\lambda^{\ell+1}}
+o\left(\frac{1}{\lambda^{\ell+1}}\right), \quad
\lambda\widehat{\rightarrow }\infty.
\end{equation}
If $IP_\kappa({\bf s},\ell)$ is nondegenerate and basic, then the
Hankel matrix $S_n$ has the form
\[
 S_n=
 \left(
 \begin{array}{ccccccc}
        &        & s_n \\
        & \adots & \vdots \\
    s_n & \dots & s_{2n} \
 \end{array}
 \right),
\]
where all the nonspecified entries are equal to 0. Define the monic
polynomial $p(\lambda)$ of degree $n+1$ by the
formula~\eqref{eq:2.5a}, where $m=n$ and $s_{2n+1}$ is an arbitrary
real number in the case of even $\ell$. Then it follows from
\eqref{negsq0a} that $s_{n} p \in \mathbf{N}_{\nu_-(S_n)}$. In the
case of even $\ell=2n$ the following result is a corollary of
general descriptions of $\cM_\kappa({\bf s},\ell)$ given
in~\cite{Dym89} and \cite{Der99}, and a short proof in this even
case $\ell=2n$ has been presented in~\cite{Derev03}. Here the result
both for even and odd $\ell$ is an immediate consequence of the
general transformation result given in Proposition~\ref{lem:2.15}.

\begin{lem}\label{derev}
Let $({\bf s},\ell)$ be a sequence of real numbers such
that~\eqref{eq:3.1} holds with $n=[\ell/2]$, let
$\nu_-:=\nu_-(S_n)$, let $p \in \mathbf{N}_{\nu_-}$ be the
polynomial of degree $n+1$ defined in \eqref{eq:2.5a}, and
$\varepsilon=s_n$. Then ${ MP}_\kappa({\bf s},\ell)$ and ${
IP}_\kappa({\bf s},\ell)$ are solvable if and only if
\begin{equation}\label{kap}
 \kappa \ge \nu_-.
\end{equation}
If $\kappa \ge \nu_-$ then the formula
\begin{equation}\label{4.12}
\varphi(\lambda)=-\frac{\varepsilon}
    { p(\lambda)+\varepsilon \tau(\lambda)},
\end{equation}
describes the sets $\cM_\kappa({\bf s},\ell)$ and $\cI_\kappa({\bf
s},\ell)$ as follows: in the even case
\[
\varphi\in\cM_\kappa({\bf s},\ell)\Leftrightarrow \tau \in
\mathbf{N}_{\kappa-\nu_-}\mbox{ and satisfies   (E); }
\]
and in the odd case
\[
\varphi\in\cM_\kappa({\bf s},\ell)\Leftrightarrow \tau \in
\mathbf{N}_{\kappa-\nu_-,1}\mbox{ and satisfies   (O); }
\]
\[
\varphi\in\cI_\kappa({\bf s},\ell)\Leftrightarrow \tau \in
\mathbf{N}_{\kappa-\nu_-}\mbox{ and satisfies   (O).}
\]
\end{lem}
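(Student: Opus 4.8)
The plan is to reduce Lemma~\ref{derev} entirely to the transformation result in Proposition~\ref{lem:2.15} by taking $m=n$ there. Under the basic normalization~\eqref{eq:3.1} we have $s_0=\dots=s_{n-1}=0$ and $|s_n|=1$, so the hypotheses of Lemma~\ref{cor:2.4} are met with $m=n$, and the matrix $S_m=S_n$ is exactly the antidiagonal Hankel block appearing in the statement. The polynomial $p$ defined by~\eqref{eq:2.5a} is then the monic degree-$(n+1)$ polynomial with $s_n p\in\mathbf{N}_{\nu_-(S_n)}$ by~\eqref{negsq0a}. First I would rewrite the defining expansion~\eqref{2.2a} of a solution as precisely~\eqref{eq:2.4} with $m=n$, so that by Lemma~\ref{cor:2.4} the relation~\eqref{eq:2.4a}, namely $-s_n/\varphi=p+\varepsilon\tau$ with $\varepsilon=\sgn s_n = s_n$ (since $|s_n|=1$), is equivalent to~\eqref{2.2a}. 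Solving this for $\varphi$ gives exactly~\eqref{4.12}, so the linear-fractional form is immediate and it remains only to identify the class of the parameter $\tau$.

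\smallskip
The core of the argument is then a direct case analysis on the parity of $\ell$, reading off each equivalence from the corresponding item of Proposition~\ref{lem:2.15} with $m=n$. For the even case $\ell=2n$, part (ii) of Proposition~\ref{lem:2.15} states that $\varphi\in\mathbf{N}_{\kappa,-2n}$ iff $\tau\in\mathbf{N}_{\kappa-\nu_-}$ and $\tau(\lambda)=o(\lambda)$; since in the even case $\cM_\kappa(s,\ell)=\cI_\kappa(s,\ell)$ (Remark~\ref{rem:Nkl}) and the condition $\tau(\lambda)=o(\lambda)$ is exactly~(E), this yields the stated description of $\cM_\kappa({\bf s},\ell)$. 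For the odd case $\ell=2n+1$, part (iii) of Proposition~\ref{lem:2.15} gives $\varphi\in\mathbf{N}_{\kappa,-2n-1}$ iff $\tau\in\mathbf{N}_{\kappa-\nu_-,1}$ and $\tau(\lambda)=o(1)$, the latter being condition~(O); this handles the moment problem $\cM_\kappa({\bf s},\ell)$. The interpolation set $\cI_\kappa({\bf s},\ell)$ in the odd case asks only for $\varphi\in\mathbf{N}_\kappa$ with the expansion~\eqref{2.2a}, so there part (i) of Proposition~\ref{lem:2.15} (membership $\varphi\in\mathbf{N}_\kappa$ iff $\tau\in\mathbf{N}_{\kappa-\nu_-}$) applies, together with the asymptotic constraint $\tau(\lambda)=o(1)$ forced by~\eqref{eq:2.4a} and Lemma~\ref{cor:2.4}(ii) when $\ell=2m+1$, which is again~(O); this gives the last line.

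\smallskip
The solvability criterion~\eqref{kap} then falls out of the parametrization: $\varphi$ exists iff a parameter $\tau$ of the prescribed negative index exists, and the negative index $\kappa-\nu_-$ must be a nonnegative integer, i.e.\ $\kappa\ge\nu_-$, while conversely for any $\kappa\ge\nu_-$ one may simply choose a $\tau$ in the relevant class (for instance $\tau\equiv 0$ in the even case, or a suitable constant/Nevanlinna function in the odd case) to produce an explicit solution. The main point requiring care is the bookkeeping of the two asymptotic side-conditions~(E) and~(O) against the class memberships coming from Proposition~\ref{lem:2.15}: one must verify that the ``$o(\lambda)$'' versus ``$o(1)$'' growth restrictions are correctly matched with the even/odd cases and with the distinction between the moment set $\cM_\kappa$ and the interpolation set $\cI_\kappa$. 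Since Proposition~\ref{lem:2.15} already encodes precisely these growth conditions in each of its four parts, no genuinely new estimate is needed, and the proof is essentially a transcription of that proposition in the basic normalized setting.
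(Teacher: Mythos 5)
Your proposal is correct and follows essentially the same route as the paper's own proof: both convert the expansion~\eqref{2.2a} into the relation $-\varepsilon/\varphi=p+\varepsilon\tau$ via Lemma~\ref{cor:2.4} with $m=n$, and then read off the solvability criterion~\eqref{kap} and the three class descriptions from parts (i)--(iii) of Proposition~\ref{lem:2.15} (together with Remark~\ref{rem:Nkl} in the even case). Your write-up merely makes explicit the case bookkeeping that the paper compresses into two sentences, so no new idea is involved.
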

\begin{proof}
Let $\varphi \in \cI_\kappa({\bf s},\ell)$. Then $\varphi \in
\mathbf{N}_{\kappa}$ with the expansion \eqref{2.2a}. Since
$|s_n|=1$ Proposition~\ref{lem:2.15} shows that now equivalently
\begin{equation} \label{2.4a}
 -1/\varphi(\lambda) = \varepsilon p(\lambda)+\tau(\lambda),
\end{equation}
where $\tau\in\mathbf{N}_{\kappa-\nu_-}$. In parti\-cu\-lar, the
solvability criterion \eqref{kap} and the assertions in the even
case $\ell=2n$ and the odd case $\ell=2n+1$ are obtained from and
parts (i) and (ii) of Lemma~\ref{cor:2.4}, respectively, and
Lemma~\ref{lem:2.15} (i).

The last statement for the moment problem ${ MP}_\kappa({\bf
s},\ell)$ in the odd case is implied by Lemma~\ref{lem:2.15} (iii).
\end{proof}

\subsection{Degenerate basic problems.}
Let $\ell\in\dN$ and  $n=[\ell/2]$. Degenerate moment and
interpolation problems ${ MP}_\kappa({\bf s},\ell)$ and ${
IP}_\kappa({\bf s},\ell)$ are said to be \textit{basic} if $\det
S_j=0$ for all $j\le n$ and the sequence $({\bf s}, \ell)$ is
normalized. Consequently, the set of degenerate basic moment
problems can be divided into two cases as follows:
\begin{enumerate}
\item[(A)]
$s_j=0$ for all $j=0,\dots,2n$.

\item[(B)] There is at least one nonzero moment $s_j$ for some $j=n+1,\ldots,2n$.
Let $m$ be the minimal number for which $|s_{m}|=1$  $(n<m \le 2n)$.
\end{enumerate}

\subsubsection{Degenerate basic problems: Case (A)}
In this case $s_0=\dots=s_{2n}=0$ and $\nu_0(S_n)=n+1$. Let us
denote $\nu_0:=\nu_0(S_n)$. In the next theorem descriptions of the
sets of solutions to the degenerate basic  problems ${
MP}_\kappa({\bf s},\ell)$ and ${ IP}_\kappa({\bf s},\ell)$ are
given.

\begin{lem} \label{prop:3.2}
Let $({\bf s},\ell)$ be a sequence of real numbers satisfying the
assumption (A) with $n=[\ell/2]$. Then in the even case the problems
${ MP}_\kappa({\bf s}, \ell)$ and ${ IP}_\kappa({\bf s},\ell)$ are
solvable if and only if
\begin{equation}\label{eq:3.6}
   \mbox{ either }\kappa=0,  \mbox{ or }\kappa\ge\nu_0:=\nu_0(S_n).
\end{equation}
In the odd case ${ MP}_\kappa({\bf s}, \ell)$ and ${ IP}_\kappa({\bf
s},\ell)$ are solvable if and only if
\begin{equation}\label{eq:3.7}
   \mbox{ either }\kappa=0 \mbox{ and } s_\ell=0, \mbox{ or }\kappa\ge\nu_0.
\end{equation}
If $\kappa=0$  (and  $s_\ell=0$ in the odd case), then it has the
unique solution $\varphi(\lambda)\equiv 0$.\\

If $\kappa\ge \nu_0$ and $\nu$ is given by~\eqref{eq:nu}, then  the
formula
  \begin{equation}\label{eq:3.5}
\varphi(\lambda)=\frac{\wh\varphi(\lambda)}{\lambda^{2\nu_0}},
\end{equation}
describes the sets $\cM_\kappa({\bf s},\ell)$ and $\cI_\kappa({\bf
s},\ell)$ as follows: in the even case
\[
\varphi\in\cM_\kappa({\bf s},\ell)\Leftrightarrow \wh\varphi\in
\mathbf{N}_{\kappa-\nu}\mbox{ and satisfies   (E); }
\]
and in the odd case
\[
\varphi\in\cM_\kappa({\bf s},\ell)\Leftrightarrow
\wh\varphi+s_\ell\in \mathbf{N}_{\kappa-\nu_-,1}\mbox{ and satisfies
(O) };
\]
\[
\varphi\in\cI_\kappa({\bf s,\ell})\Leftrightarrow
\wh\varphi+s_\ell\in \mathbf{N}_{\kappa-\nu_-}\mbox{ and satisfies
(O) }.
\]
\end{lem}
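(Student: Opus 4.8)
The plan is to reduce the whole statement to the single substitution $\wh\varphi(\lambda)=\lambda^{2\nu_0}\varphi(\lambda)$ with $\nu_0=\nu_0(S_n)=n+1$, reading off the behaviour of $\wh\varphi$ from the required expansion~\eqref{0.2}. Since $s_0=\dots=s_{2n}=0$, in the even case $(\ell=2n)$ condition~\eqref{0.2} becomes $\varphi(\lambda)=o(1/\lambda^{2n+1})$, hence $\wh\varphi(\lambda)=o(\lambda)$, i.e. $\wh\varphi$ satisfies~(E); in the odd case $(\ell=2n+1)$ it becomes $\varphi(\lambda)=-s_\ell/\lambda^{2n+2}+o(1/\lambda^{2n+2})$, hence $\wh\varphi(\lambda)=-s_\ell+o(1)$, i.e. $\wh\varphi+s_\ell$ satisfies~(O). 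Since $\cM_\kappa({\bf s},\ell)=\cI_\kappa({\bf s},\ell)$ in the even case by Remark~\ref{rem:Nkl}, only $\cM_\kappa$ needs treatment there. I would first settle $\kappa=0$ directly: a function $\varphi\in{\bf N}_0$ has the representation~\eqref{IntR} with $r\equiv1$, and $\lambda\varphi(\lambda)\to0$ forces the total mass of $\rho$ to vanish, whence $\rho\equiv0$, then $b=0$, and finally $a=0$, so $\varphi\equiv0$. In the even case this is the unique solution; in the odd case it additionally forces $s_\ell=0$. This yields the criteria~\eqref{eq:3.6},~\eqref{eq:3.7} at $\kappa=0$ together with the stated uniqueness.

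Next I would prove that $\kappa\ge\nu_0$ is necessary once $\varphi\not\equiv0$. Using the canonical factorization $\varphi=r\,r^\#\varphi_0$ from Theorem~\ref{thm:2.1} together with Theorem~\ref{subcl}, and writing $\pi_\infty=\pi_\infty(\varphi)=\deg q-\deg p$, one has $r(\lambda)r^\#(\lambda)\sim c\,\lambda^{-2\pi_\infty}$ with $c>0$, while a nonzero $\varphi_0\in{\bf N}_0$ obeys $\varphi_0(\lambda)\to a_0$ or $\varphi_0(\lambda)\sim c'/\lambda$ with a nonzero leading constant. Thus $\varphi$ decays exactly like $\lambda^{-2\pi_\infty}$ or $\lambda^{-2\pi_\infty-1}$, and $\varphi(\lambda)=o(1/\lambda^{2n+1})$ forces $\pi_\infty\ge n+1=\nu_0$. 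Because $-1/\varphi\in{\bf N}_\kappa$ and its generalized pole of nonpositive type at $\infty$ is exactly the generalized zero $\pi_\infty$ of $\varphi$, the Kre\u{\i}n--Langer count in Remark~\ref{rem:2.1} gives $\kappa\ge\pi_\infty\ge\nu_0$. This excludes $1\le\kappa\le\nu_0-1$ and completes the solvability criteria.

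For $\kappa\ge\nu_0$ the remaining task is the bijective description via~\eqref{eq:3.5}. The backward implication follows from the converse half of Lemma~\ref{lem:2.2}: conditions~(E)/(O) force $\wh\varphi(\lambda)=o(\lambda)$, so $\kappa_\infty(\wh\varphi)=0$, and dividing by $\lambda^{2\nu_0}$ delivers $\varphi\in{\bf N}_{\kappa,-\ell}$ with the prescribed expansion, the index shift being governed by~\eqref{eq:nu}. For the forward implication I would start from a solution $\varphi$, use $\nu_0\le\pi_\infty(\varphi)$ from the previous step to factor out $\lambda^{2\nu_0}$, and again read the negative index from~\eqref{eq:nu}; the moment conditions normally imposed on $\wh\varphi$ are vacuous since $\ell-2\nu_0<0$, so only the growth condition survives, while in the odd case the distinction between the classes ${\bf N}_{\kappa-\nu_-,1}$ and ${\bf N}_{\kappa-\nu_-}$ for $\cM_\kappa$ and $\cI_\kappa$ is obtained by applying Definition~\ref{def:Nk1} and Theorem~\ref{N1} to $\wh\varphi+s_\ell$. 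The main obstacle is precisely this index bookkeeping in the regime $\nu_0=n+1>\ell/2$, where the forward direction of Lemma~\ref{lem:2.2} is unavailable (its hypothesis $\nu_0\le\ell/2$ fails); the identity relating $\kappa(\varphi)$ and $\kappa(\wh\varphi)$ must therefore be secured from the factorization and the inequality $\nu_0\le\pi_\infty(\varphi)$ rather than quoted.
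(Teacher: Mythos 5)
Your proposal is correct and follows essentially the same route as the paper's own proof: the trivial treatment of $\kappa=0$, necessity of $\kappa\ge\nu_0$ via $\lim\lambda^{2n+1}\varphi(\lambda)=0$ forcing $\pi_\infty(\varphi)\ge n+1$ plus the Kre\u{\i}n--Langer count of Remark~\ref{rem:2.1}, and the parametrization through $\wh\varphi(\lambda)=\lambda^{2\nu_0}\varphi(\lambda)$ with the index shift governed by~\eqref{eq:nu}. The one place you are more careful than the paper is your observation that Lemma~\ref{lem:2.2} cannot be quoted verbatim here (its hypothesis $\nu_0\le\ell/2$ fails since $\nu_0=n+1$), whereas the paper cites it directly; your remedy --- securing the index bookkeeping from the canonical factorization together with $\nu_0\le\pi_\infty(\varphi)$ --- is precisely the content of part 2) of the proof of Lemma~\ref{lem:2.2}, which only uses that weaker inequality, so the argument goes through.
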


\begin{proof}
The case $\kappa=0$ is trivial, since if $\varphi\in\mathbf{N}_0$,
then $s_0=0$ implies $\varphi=0$.

Let $\kappa>0$ and let $\varphi\in \mathbf{N}_{\kappa}$ be a
solution of the interpolation problem $IP_\kappa({\bf s}, \ell)$.
Then it follows from~\eqref{2.2a} and (A) that in the even case
($\ell=2n$)
\begin{equation}\label{2.2e}
\varphi(\lambda)= o\left(\frac{1}{\lambda^{2n+1}}\right), \quad
\lambda\widehat{\rightarrow }\infty.
\end{equation}
and in the odd case
\begin{equation}\label{2.2o}
\varphi(\lambda)= -\frac{s_{2n+1}}{\lambda^{2n+2}}
+o\left(\frac{1}{\lambda^{2n+2}}\right), \quad
\lambda\widehat{\rightarrow }\infty.
\end{equation}
In both cases
\[
\lim_{\lambda\hat\to\infty}\lambda^{2n+1}\varphi(\lambda)=0,\quad
\]
and, hence, the multiplicity $\pi_\infty(\varphi)$ is at least
$n+1$. In view of Remark~\ref{rem:2.1} one has
\[
  \kappa\ge \pi_\infty(\varphi)\ge n+1=\nu_0.
\]
This proves the necessity of the condition $\kappa\ge \nu_0$. Due to
Lemma~\ref{lem:2.2} $\varphi$ admits the
representation~\eqref{eq:3.5} with $\wh\varphi\in
\mathbf{N}_{\kappa-\nu}$. In addition, it follows from~\eqref{2.2e}
and~\eqref{2.2o} that $\wh\varphi$ satisfy the assumption~(E) if
$\ell$ is even, and $\wh\varphi+s_\ell$ satisfy~(O) if $\ell$ is
odd.

And finally, let $\ell=2n+1$ and $\kappa\ge \nu_0=n+1$. Then by
Lemma~\ref{lem:2.2} $\varphi\in\mathbf{N}_{\kappa,-\ell}$ if and
only if $\wh\varphi\in\mathbf{N}_{\kappa-\nu,1}$. This completes the
proof.
\end{proof}

\subsubsection{Degenerate basic  problems: Case (B)} In this case
a function $\varphi$ from $ \mathbf{N}_{\kappa,\ell}$ $(
\mathbf{N}_{\kappa,-\ell})$ is a solution to the degenerate basic
problem ${ MP}_\kappa({\bf s}, \ell)$  (${ IP}_\kappa({\bf
s},\ell)$), if
\begin{equation}\label{4.2m}
     \varphi(\lambda)=
-\frac{s_{m}}{\lambda^{m+1}}-\dots-\frac{s_{\ell}}{\lambda^{\ell+1}}
+o\left(\frac{1}{\lambda^{\ell+1}}\right), \quad
\lambda\widehat{\rightarrow }\infty,
\end{equation}
with $|s_{m}|=1$ and $n<m\le 2n$, where $n=[\ell/2]$.
\begin{lem} \label{prop:3.3}
Let $({\bf s},\ell)$ be a sequence of real numbers satisfying the
assumption (B) with $n=[\ell/2]$.  Then the problems ${
MP}_\kappa({\bf s}, \ell)$ and ${ IP}_\kappa({\bf s},\ell)$ are
solvable if and only if
\begin{equation}\label{SolvCond}
    \kappa\ge k:=\nu_0+\nu_-,\quad \nu_0:=\nu_0(S_n),\,\nu_-:=\nu_-(S_n).
\end{equation}
Let the sequence $(\widehat{{\bf s}},\ell-2\nu_0)=\{\wh
s_i\}_{i=0}^{\ell-2\nu_0}$ be given by the equalities
\begin{equation}\label{eq:4.3}
\wh s_j= s_{j+2\nu_0},\quad(j=0,\dots,\ell-2\nu_0),
\end{equation}
and let $\nu$ be defined by~\eqref{eq:nu}. Then the
formula~\eqref{eq:3.5} establishes a one-to-one correspondence
between solutions $\varphi$ of the problem $IP_\kappa({\bf s},
\ell)$ and solutions $\wh\varphi$ of the nondegenerate basic problem
$IP_{\kappa-\nu}(\wh{\bf s}, \ell-2\nu_0)$. A similar statement
concerning the problems $MP_\kappa({\bf s}, \ell)$ and
$MP_{\kappa-\nu}(\wh{\bf s}, \ell-2\nu_0)$) is also true.
\end{lem}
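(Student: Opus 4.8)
The plan is to reduce the degenerate basic problem in Case (B) to a nondegenerate basic problem by extracting the pole of order $2\nu_0$ at infinity, exactly as prepared in Lemma~\ref{lem:2.2}. First I would observe that under assumption (B) we have $s_0=\dots=s_{m-1}=0$ with $n<m\le 2n$, so the matrix $S_n$ has zeros in its upper-left block and $\nu_0(S_n)=\nu_0$ counts the dimension of the kernel coming from these vanishing moments. The starting point is that any solution $\varphi$ of $IP_\kappa({\bf s},\ell)$ satisfies the expansion~\eqref{4.2m}, whose leading term is $-s_m/\lambda^{m+1}$ with $m\ge n+1$; since the first $m$ coefficients vanish, $\varphi$ has $\lim_{\lambda\wh\to\infty}\lambda^{2\nu_0+1}\varphi(\lambda)$ controlled, forcing $\pi_\infty(\varphi)\ge\nu_0$. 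This is the same mechanism as in Lemma~\ref{prop:3.2} and gives the necessity of a pole of order at least $2\nu_0$ at infinity.

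Next I would apply Lemma~\ref{lem:2.2} with the given $\nu_0\le\min\{\pi_\infty(\varphi),\ell/2\}$: setting $\wh\varphi(\lambda)=\lambda^{2\nu_0}\varphi(\lambda)$ yields $\wh\varphi\in\mathbf{N}_{\kappa-\nu,-(\ell-2\nu_0)}$ with $\nu$ as in~\eqref{eq:nu}, and conversely the reverse direction of that lemma lets one rebuild $\varphi$ from any admissible $\wh\varphi$ with $\kappa_\infty(\wh\varphi)=0$. The multiplication by $\lambda^{2\nu_0}$ shifts the asymptotic expansion~\eqref{4.2m}: the coefficient $s_{j+2\nu_0}$ of $\lambda^{-(j+2\nu_0+1)}$ in $\varphi$ becomes the coefficient $\wh s_j=s_{j+2\nu_0}$ of $\lambda^{-(j+1)}$ in $\wh\varphi$, which is precisely the relabelling~\eqref{eq:4.3}. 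Since $|s_m|=1$ and $m-2\nu_0$ is the new leading index, the transformed sequence $(\wh{\bf s},\ell-2\nu_0)$ is normalized with first nonzero entry of modulus $1$ at position $m-2\nu_0\le n-\nu_0=[(\ell-2\nu_0)/2]$, so $\wh\varphi$ solves exactly the nondegenerate basic problem $IP_{\kappa-\nu}(\wh{\bf s},\ell-2\nu_0)$ in the sense of Lemma~\ref{derev}. Matching the class indices via~\eqref{eq:nu} and checking that~\eqref{SolvCond} reads $\kappa-\nu\ge\nu_-(\wh S)$ (the solvability threshold of the nondegenerate problem from Lemma~\ref{derev}) gives the stated solvability criterion.

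The correspondence $\varphi\leftrightarrow\wh\varphi$ is then manifestly one-to-one because~\eqref{eq:3.5} and~\eqref{eq:cancel} are mutually inverse algebraic operations, and Lemma~\ref{lem:2.2} guarantees both directions land in the correct classes; the moment-problem version follows identically by additionally tracking the Nevanlinna growth conditions (E) in the even case and (O) in the odd case through the multiplication by $\lambda^{2\nu_0}$, using the inclusions~\eqref{subincl} as in Lemma~\ref{prop:3.2}.

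I expect the main obstacle to be the bookkeeping of the negative-index drop $\nu$, which is $\nu_0$ or $\kappa_0(\varphi)$ depending on whether $\varphi$ has a generalized pole of nonpositive type at the origin (see~\eqref{eq:nu}). The subtlety is that removing the pole at infinity can either genuinely lower $\kappa$ by $\nu_0$ or, if $\varphi$ already carries negative squares at $0$, interact with them; reconciling $\nu_-(S_n)$ (which governs the nondegenerate threshold for $\wh\varphi$) with the total index budget $\kappa\ge\nu_0+\nu_-$ requires carefully applying the index-counting of Remark~\ref{rem:2.1} together with the canonical factorization of Theorem~\ref{thm:2.1}, so that no negative squares are lost or double-counted in passing between $\varphi$ and $\wh\varphi$.
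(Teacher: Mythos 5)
Your reduction $\wh\varphi(\lambda)=\lambda^{2\nu_0}\varphi(\lambda)$ via Lemma~\ref{lem:2.2}, together with the relabelling \eqref{eq:4.3} and the check that the shifted sequence gives a nondegenerate basic problem, is exactly the mechanism used in the paper, and that part of your argument is sound. The genuine gap is in the \emph{necessity} of \eqref{SolvCond}. You establish only $\pi_\infty(\varphi)\ge\nu_0$, apply Lemma~\ref{lem:2.2}, and then invoke the solvability threshold of Lemma~\ref{derev} for the reduced problem. This yields $\kappa-\nu\ge\nu_-(\wh S_{n-\nu_0})=\nu_-$, i.e.\ $\kappa\ge\nu+\nu_-$. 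But \eqref{SolvCond} demands $\kappa\ge\nu_0+\nu_-$, and by \eqref{eq:nu} one has $\nu\le\nu_0$ with equality only when $\kappa_0(\varphi)\ge\nu_0$; if a solution $\varphi$ had no (or a low-multiplicity) generalized pole of nonpositive type at the origin, then $\nu<\nu_0$ and your inequality is strictly weaker than the one to be proved. In other words, your assertion that ``\eqref{SolvCond} reads $\kappa-\nu\ge\nu_-(\wh S)$'' is false as stated, and the case you yourself flag as ``the main obstacle'' is exactly what is left unproved: nothing in your argument excludes a solution with $\nu+\nu_-\le\kappa<\nu_0+\nu_-$.

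The paper closes this by a sharper, direct computation that bypasses $\nu$ altogether: splitting Case (B) into (B1) $m=2k$, $s_m>0$; (B2) $m=2k-1$; (B3) $m=2k-2$, $s_m<0$, it computes $\nu_-(S_n)$ explicitly (equal to $n-k$, $n-k+1$, $n-k+2$, respectively), so that $\nu_0+\nu_-=k$ in all three cases, and then shows from the expansion \eqref{4.2m} and the limit characterization \eqref{infgzer} that $\infty$ is a GZNT of $\varphi$ of multiplicity \emph{exactly} $k$; Krein--Langer counting (Remark~\ref{rem:2.1}) then gives $\kappa\ge\pi_\infty(\varphi)=k=\nu_0+\nu_-$ for every solution, independently of the behaviour at the origin. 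If you prefer to stay within your scheme, you must add the compensating count at $0$: when $\nu=\kappa_0(\varphi)<\nu_0$, the function $\wh\varphi=\lambda^{2\nu_0}\varphi$ acquires a GZNT at $0$ of multiplicity $\nu_0-\nu$ in addition to its GZNT at $\infty$ of multiplicity $\nu_-$, whence $\kappa-\nu\ge\nu_-+(\nu_0-\nu)$, restoring $\kappa\ge\nu_0+\nu_-$. An argument of one of these two types is indispensable and is missing from your proposal. (A minor terminological slip: what you call a ``pole of order at least $2\nu_0$ at infinity'' is a generalized \emph{zero} of nonpositive type of $\varphi$ at $\infty$; the pole belongs to $-1/\varphi$.)
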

\begin{proof}
Let $\varphi\in\cI_\kappa({\bf s},\ell)$, so that $\varphi$ belongs
to ${\bf N}_{\kappa}$ and satisfies \eqref{4.2m}. Now the matrix
$S_n$ takes the form
\[
 S_n=
 \left(
 \begin{array}{ll}
{\bf 0}_{(m-n)\times{(m-n)}}     & {\bf 0}_{{(m-n)}\times(2n -m+1)} \\
{\bf 0}_{(2n -m+1)\times{(m-n)}} & S_{[m-n,n]}    \\
 \end{array}
 \right),
 \]
where
\[
 S_{[m-n,n]}=(s_{i+j})_{i,j=m-n}^n=\left(
 \begin{array}{cccccc}
         &        & s_m \\
          & \adots & \vdots \\
      s_m & \dots & s_{2n} \
 \end{array} \right)
\]
is invertible since $s_m\neq 0$ ($|s_m|=1$). It is clear that
\[
\nu_0(S_n)=m-n>0.
\]

To determine the index $\nu_-(S_n)$ consider the following three
subcases:
\begin{enumerate}
\item[(B1)] $m$ is even and $s_{m}>0$ (denote $m=2k$);

\item[(B2)] $m$ is odd (denote $m=2k-1$);

\item[(B3)] $m$ is even and $s_m<0$ (denote $m=2k-2$).
\end{enumerate}
Then one can easily check that
\[
\nu_-(S_n)=\left\{\begin{array}{ll}
                       n-k, &  \mbox{in case (B1)}; \\
                      n-k+1, & \mbox{in case (B2)}; \\
                      n-k+2, & \mbox{in case (B3)}, \\
                    \end{array}\right.
\]
so that in each of the cases (B1)--(B3) one has
\begin{equation}\label{nu-0}
   \nu_0(S_n)+\nu_-(S_n)=k > 0.
\end{equation}
It follows from~\eqref{4.2m} that  in the case (B1)
\[
\lim_{\lambda\hat\to\infty}\lambda^{2k+1}\varphi(\lambda)=-s_{2k}<0,\quad
\lim_{\lambda\hat\to\infty}\lambda^{2k-1}\varphi(\lambda)= 0,
\]
In the cases (B2) and (B3), respectively, one obtains
\[
\begin{split}
\lim_{\lambda\hat\to\infty}\lambda^{2k+1}\varphi(\lambda)&=\infty,\quad
\lim_{\lambda\hat\to\infty}\lambda^{2k-1}\varphi(\lambda)= 0;\\
\lim_{\lambda\hat\to\infty}\lambda^{2k+1}\varphi(\lambda)&=\infty,\quad
\lim_{\lambda\hat\to\infty}\lambda^{2k-1}\varphi(\lambda)=-s_{2k-2}>0.
\end{split}
\]
Hence, in each of these cases, $\infty$ is a GZNT  of
$\varphi(\lambda)$ of multiplicity $\pi_\infty(\varphi)=k$; see
\eqref{infgzer}. By Theorem~\ref{thm:2.1} (or Remark~\ref{rem:2.1})
this implies the inequality~\eqref{SolvCond}.

Due to Lemma~\ref{lem:2.2} $\varphi$ admits the
representation~\eqref{eq:3.5} where
$\wh\varphi\in\mathbf{N}_{\kappa-\nu}$. Clearly, the
expansion~\eqref{4.2m} can be rewritten as
\[%begin{equation}\label{4.2mm}
     \wh\varphi(\lambda)=
-\frac{s_{m}}{\lambda^{m-2\nu_0+1}}-\dots-\frac{s_{\ell}}{\lambda^{\ell-2\nu_0+1}}
+o\left(\frac{1}{\lambda^{\ell-2\nu_0+1}}\right), \quad
\lambda\widehat{\rightarrow }\infty,
\]%end{equation}
Therefore, $\wh\varphi\in\cI_{\kappa-\nu}(\wh{\bf s}, \ell-2\nu_0)$.
These arguments can be reversed to obtain the converse statement.

In the case where $\ell$ is odd it follows from Lemma~\ref{lem:2.2}
that $\varphi\in\mathbf{N}_{\kappa,-\ell}$ if and only if
$\wh\varphi\in\mathbf{N}_{\kappa-\nu,-(\ell-2\nu_0)}$. This proves
the statement concerning the set $\cM_\kappa({\bf s}, \ell)$.
\end{proof}

The following Lemma summarizes the results of Lemmas~\ref{derev} and
\ref{prop:3.3}.

\begin{lem}\label{prop:4.1}
Let $({\bf s},\ell)$ be a sequence of real numbers satisfying the
assumption (B) with $n=[\ell/2]$, let~\eqref{SolvCond} hold, let
$\nu$ be defined by~\eqref{eq:nu},  $\varepsilon=s_m$, and let
\begin{equation}\label{4.5b}
\wh p(\lambda)=\frac{1}{\det S_{[m-n,n]}}\det \left(
\begin{array}{cccc}
        &          &  s_{m} & s_{m+1} \\
        &  \adots  & \adots &   \vdots       \\
  s_{m} &  s_{m+1} & \dots  &  s_{2n+1} \\
  1     & \lambda  & \dots  & \lambda^{2n+1}
\end{array}
\right),
\end{equation}
where $s_{2n+1}$ is an arbitrary real number if $\ell$ is even. Then
the formula
\begin{equation}\label{4.12b1}
\varphi(\lambda)=-\frac{\varepsilon}
    {\lambda^{2\nu_0}(\wh p(\lambda)+\varepsilon\tau(\lambda))},
\end{equation}
describes the sets $\cM_\kappa({\bf s},\ell)$ and $\cI_\kappa({\bf
s},\ell)$ as follows: in the even case
\[
\varphi\in\cM_\kappa({\bf s},\ell)\Leftrightarrow \wh\varphi\in
\mathbf{N}_{\kappa-\nu_--\nu}\mbox{ and satisfies   (E); }
\]
and in the odd case
\[
\varphi\in\cM_\kappa({\bf s},\ell)\Leftrightarrow
\wh\varphi+s_\ell\in \mathbf{N}_{\kappa-\nu_--\nu,1}\mbox{ and
satisfies (O) };
\]
\[
\varphi\in\cI_\kappa({\bf s,\ell})\Leftrightarrow
\wh\varphi+s_\ell\in \mathbf{N}_{\kappa-\nu_--\nu}\mbox{ and
satisfies (O) }.
\]
\end{lem}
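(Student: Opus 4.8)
The plan is to derive Lemma~\ref{prop:4.1} by simply composing the two reductions already in hand: first the passage to a lower problem furnished by Lemma~\ref{prop:3.3}, and then the parametrization of the resulting \emph{nondegenerate} basic problem furnished by Lemma~\ref{derev}. Before chaining them I would record the structural consequences of assumption~(B). Since the problem is degenerate basic, $\det S_j=0$ for all $j\le n$ together with normalization forces $s_0=\dots=s_{m-1}=0$ and $|s_m|=1$ (the Hankel matrix $S_j$ is anti-triangular with $\pm s_j^{\,j+1}$ on the diagonal as long as the earlier moments vanish, so each successive determinant condition kills the next moment). In particular $\nu_0(S_n)=m-n=\nu_0$, and, exactly as displayed in the proof of Lemma~\ref{prop:3.3}, $S_n$ is block-diagonal with a zero block of size $m-n$ and the invertible core $S_{[m-n,n]}$. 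Consequently $\nu_-(S_n)=\nu_-(S_{[m-n,n]})$, the zero block contributing only to $\nu_0$.

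Next I would apply Lemma~\ref{prop:3.3}. Putting $\wh s_j=s_{j+2\nu_0}$ and $\wh n:=[(\ell-2\nu_0)/2]=n-\nu_0=2n-m$, one checks directly that $\wh s_0=\dots=\wh s_{\wh n-1}=0$ and $|\wh s_{\wh n}|=|s_m|=1$, and that the Hankel matrix $\wh S_{\wh n}$ of the reduced sequence equals the invertible core $S_{[m-n,n]}$. Thus $(\wh{\bf s},\ell-2\nu_0)$ is normalized and $IP_{\kappa-\nu}(\wh{\bf s},\ell-2\nu_0)$ is nondegenerate basic. By Lemma~\ref{prop:3.3} the map $\varphi\mapsto\wh\varphi=\lambda^{2\nu_0}\varphi$ of \eqref{eq:3.5} is a bijection of $\cI_\kappa({\bf s},\ell)$ (resp.\ $\cM_\kappa({\bf s},\ell)$) onto $\cI_{\kappa-\nu}(\wh{\bf s},\ell-2\nu_0)$ (resp.\ $\cM_{\kappa-\nu}$), with $\nu$ as in \eqref{eq:nu}, and the solvability criterion \eqref{SolvCond} is inherited from it.

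Then I would invoke Lemma~\ref{derev} for the reduced problem. The polynomial $\wh p$ in \eqref{4.5b} is, after the re-indexing $i\mapsto i-(m-n)$, precisely the monic polynomial \eqref{eq:2.5a} of degree $\wh n+1$ attached to $\wh{\bf s}$: its defining determinant uses $\wh s_{\wh n},\dots,\wh s_{2\wh n+1}=s_m,\dots,s_{2n+1}$ and is divided by $\det\wh S_{\wh n}=\det S_{[m-n,n]}$, and $\varepsilon=s_m=\wh s_{\wh n}$. Hence \eqref{4.12} writes every solution of the reduced problem as $\wh\varphi=-\varepsilon/(\wh p+\varepsilon\tau)$ with the parameter $\tau$ ranging over $\mathbf{N}_{(\kappa-\nu)-\nu_-(\wh S_{\wh n})}$ (or its subclass $\mathbf{N}_{\cdot,1}$), subject to (E) or (O) according to the parity of $\ell$ and to whether one describes $\cM$ or $\cI$. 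Substituting this into \eqref{eq:3.5} reproduces \eqref{4.12b1}, and since $\nu_-(\wh S_{\wh n})=\nu_-$ the index of the parameter class becomes $(\kappa-\nu)-\nu_-=\kappa-\nu_--\nu$, matching the asserted descriptions (the constant $s_\ell$ in the odd case enters exactly as in Lemma~\ref{prop:3.3}).

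The only genuinely delicate point is the index bookkeeping carried through both steps. One must be sure that the negative index lost in Lemma~\ref{prop:3.3} (encoded by $\nu$, which by \eqref{eq:nu} depends on $\kappa_0(\varphi)$) and the negative index $\nu_-(\wh S_{\wh n})$ extracted by Lemma~\ref{derev} are \emph{disjoint} contributions, summing to $\nu+\nu_-$, so that the parameter genuinely ranges over $\mathbf{N}_{\kappa-\nu_--\nu}$. The identification $\nu_-(\wh S_{\wh n})=\nu_-(S_n)$ coming from the block splitting in the first paragraph is what makes these two accountings compatible; the remaining verifications — the determinant identity for $\wh p$, the rewriting of the asymptotic expansion \eqref{4.2m} after multiplication by $\lambda^{2\nu_0}$, and the threshold $\kappa=\nu_0+\nu_-$ being attained by the extremal solution with $\kappa_0(\varphi)=\nu_0$ — are routine and parallel to the computations already performed in Lemmas~\ref{derev} and \ref{prop:3.3}.
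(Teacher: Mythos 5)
Your proposal is correct and takes essentially the same route as the paper's own proof: reduce via Lemma~\ref{prop:3.3} to the nondegenerate basic problem for $\wh{\bf s}$, parametrize that problem by Lemma~\ref{derev}, and substitute the resulting formula into \eqref{eq:3.5} to obtain \eqref{4.12b1}. Your extra verifications (that $\wh S_{\wh n}=S_{[m-n,n]}$, hence $\nu_-(\wh S_{\wh n})=\nu_-(S_n)$, and that $\wh p$ is the polynomial \eqref{eq:2.5a} of the shifted sequence) simply make explicit the index bookkeeping that the paper leaves implicit.
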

\begin{proof}
Assume that \eqref{SolvCond} holds. In Lemma~\ref{prop:3.3} the
problem $IP_\kappa({\bf s}, \ell)$ was reduced to the  problem
$IP_\kappa(\wh{\bf s}, \ell-2\nu_0)$. By Lemma \ref{derev} the set
$\cI_{\kappa-\nu}(\wh{\bf s}, \ell-2\nu_0)$ can be described by the
formula
\begin{equation}\label{4.3}
\wh\varphi(\lambda)=-\frac{  \varepsilon}
    {\wh p(\lambda)+\varepsilon\tau(\lambda)}
\end{equation}
where $\tau$ is a function from the class
$\mathbf{N}_{\kappa-(\nu+\nu_-)}$ such that the appropriate
condition (E) or (O) is satisfied.  Substitution of~\eqref{4.3}
into~\eqref{eq:3.5} yields~\eqref{4.12b1}.

Due to Lemma~\ref{prop:3.3} and Lemma \ref{derev} $\varphi$ belongs
to $\cM_\kappa({\bf s},\ell)$ with odd $\ell $, if and only if the
function $\tau$ belongs to $\mathbf{N}_{\kappa-(\nu+\nu_-)}$ and
satisfies the condition (O).
\end{proof}

\section{Schur algorithm}

The present approach to the degenerate moment problem is based on
the following reduction algorithm which for the nondegenerate case
with even index $\ell$ was considered in~\cite{Derev03}.

\subsection{One step reduction for moment problems which are not basic}

Let ${\bf s}=\{s_j\}_{j=0}^{\ell}$ be an arbitrary normalized
sequence of real numbers and let $S_n=(s_{i+j})_{i,j=0}^n$ be the
Hankel matrix as defined in \eqref{0.3}. Assume that $S_n\neq 0$ and
consider a sequence of \textit{normal indices} of $S_n$,
\begin{equation} \label{4.01}
 0<n_1<\dots<n_N \le n+1
\end{equation}
which are characterized by the conditions
   \begin{equation} \label{18}
   \mbox{det } S_{n_j-1} \ne 0\quad (j=1,\dots,N).
   \end{equation}
In particular, the first normal index $n_1$ is the minimal natural
number such that $\det S_{n_1-1}\ne 0$,  or , equivalently, that
\begin{equation}\label{restr}
 s_0=s_1=\dots=s_{n_1-2}=0, \quad s_{n_1-1} \ne 0.
\end{equation}
Note that the first normal index satisfies $n_1(=n_N)=n+1$ precisely
when the moment problem is nondegenerate and basic and that there
are no normal indices for moment problems which are degenerate and
basic; see Section~\ref{sec3}.

In this section it is assumed that the moment problem is not basic,
i.e., one has $n_1\le n$. Let the sequence $({\bf
s},\ell)=\{s_j\}_{j=0}^{\ell}$ be normalized and denote
\[
 \varepsilon_1=\sgn s_{n_1-1}=\pm 1.
\]
In this case a function $\varphi \in \mathbf{N}_{\kappa,-\ell}$ is a
solution to the moment problem $\cM_\kappa({\bf s},\ell)$  if
\begin{equation}\label{2.2b}
\varphi(\lambda)=
-\frac{s_{n_1-1}}{\lambda^{n_1}}-\frac{s_{n_1}}{\lambda^{n_1+1}}-\dots-\frac{s_{\ell}}{\lambda^{\ell+1}}
+o\left(\frac{1}{\lambda^{\ell+1}}\right), \quad
\lambda\widehat{\rightarrow }\infty.
\end{equation}
Then $-1/\varphi \in \mathbf{N}_\kappa$ and, moreover, by part (iii)
of Lemma~\ref{cor:2.4} $-1/\varphi $ admits the representation
   \begin{equation} \label{2.4}
-1/\varphi(\lambda)
=\varepsilon_1p_1(\lambda)+a_1^2\varphi_1(\lambda),
   \end{equation}
where $p_1(\lambda)=p_{n_1}^{(1)}\lambda^{n_1}+\dots+ p_0^{(1)}$ is
a monic polynomial of degree $n_1$ ($p_{n_1}^{(1)}=1$), defined by
the equation~\eqref{eq:2.5a} with $m=n_1-1$, and $a_1(>0)$ is chosen
in such a way that the sequence $({\bf
s}^{(1)},\ell-2n_1)=(s_i^{(1)})_{i=0}^{\ell-2n_1}$ defined by the
expansion of $\varphi_1(\lambda)$
\begin{equation}\label{2.6}
\varphi_1(\lambda)=
-\frac{s_{0}^{(1)}}{\lambda}-\frac{s_{1}^{(1)}}{\lambda^{2}}-\dots
-\frac{s_{\ell-2n_1}^{(1)}}{\lambda^{\ell-2n_1+1}}
+o\left(\frac{1}{\lambda^{\ell-2n_1+1}}\right)
\quad(\lambda\widehat{\rightarrow }\infty),
\end{equation}
is normalized. Moreover, by Proposition~\ref{lem:2.15}~(iii)
$\varphi_1$ is a generalized Nevanlinna function from the class
\[
 \mathbf{N}_{\kappa-\kappa_1,-(\ell-2n_1)}, \quad
 \kappa_1:=\nu_-(S_{n_1-1}).
\]
As was shown in Lemma~\ref{cor:2.4} the moment sequence $({\bf
s}^{(1)},\ell-2n_1)$ is uniquely defined by the matrix equations
   \begin{equation} \label{2.7}
T(s_{n_1-1},\dots,  s_{j+2n_1})T(p_{n_1}^{(1)},\dots,
p_0^{(1)},-\varepsilon_1a_1^2{s}_{0}^{(1)},\dots,
-\varepsilon_1a_1^2{s}_{j}^{(1)} ) =\varepsilon_{1}I_{j+n_1+2} ,
  \end{equation} where $0\le j\le \ell-2n_1$.

The above considerations yield the following result.

\begin{prop}\label{prop:4.2}
Let $S_{n}$ be a Hankel matrix, let $n_1$ be the first normal index
of $S_{n}$, $n_1\le n$, let the monic polynomial
$p_1(\lambda)=p_{n_1}^{(1)}\lambda^{n_1}+\dots+ p_0^{(1)}$ and the
induced  moment sequence $({\bf s}^{(1)},\ell-2n_1)$ be defined
by~\eqref{2.7}, $\varepsilon_1={s_{n_1-1}}$,
$\kappa_1:=\nu_-(S_{n_1-1})$. Then the formula
   \begin{equation} \label{2.25}
\varphi(\lambda)=T_1[\varphi_1(\lambda)]:=
\frac{-\varepsilon_1}{p_1(\lambda)+\varepsilon_1a_1^2\varphi_1(\lambda)}
   \end{equation}
establishes a one-to-one correspondence between the sets
$\cM_\kappa({\bf s},\ell)$ and $\cM_{\kappa-\kappa_1}({\bf
s}^{(1)},\ell-2n_1)$ as well as between the sets $\cI_\kappa({\bf
s},\ell)$ and $\cI_{\kappa-\kappa_1}({\bf s}^{(1)},\ell-2n_1)$.
\end{prop}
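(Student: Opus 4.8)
The plan is to read off Proposition~\ref{prop:4.2} from Lemma~\ref{cor:2.4} and Proposition~\ref{lem:2.15} by specializing $m=n_1-1$; the main work is checking that the hypotheses of those results are met and matching up the two asymptotic expansions. First I would record that, since $n_1$ is the first normal index, \eqref{restr} gives $s_j=0$ for $j<n_1-1$ and $s_{n_1-1}\neq0$, so with $m=n_1-1$ the sequence satisfies the standing assumption of Lemma~\ref{cor:2.4}; normalization gives $|s_m|=1$, $\varepsilon_1=\sgn s_{n_1-1}=s_{n_1-1}$, and the monic polynomial $p_1$ of degree $n_1$ built from \eqref{eq:2.5a} is exactly the $p$ appearing there. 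Because $n_1\le n$ and $\ell\ge 2n$ one has $\ell\ge 2n_1=2m+2>2m+1$, placing us in case (iii). Hence a function $\varphi$ obeys the expansion \eqref{2.2b} (which is precisely \eqref{eq:2.4} for this $m$) if and only if $-\varepsilon_1/\varphi=p_1+\varepsilon_1\tau$ with $\tau$ admitting the expansion \eqref{eq:2.4E}; comparing \eqref{eq:2.4E} with \eqref{2.6} and using the normalizing constant $a_1>0$ identifies $\tau=a_1^2\varphi_1$, the coefficients being recorded by \eqref{2.7}, and substitution then reproduces the transformation \eqref{2.25}.

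With this dictionary in hand I would transfer class membership through the two scales. For the interpolation sets, Proposition~\ref{lem:2.15}(i) yields $\varphi\in\mathbf{N}_\kappa\Leftrightarrow\tau\in\mathbf{N}_{\kappa-\nu_-(S_{n_1-1})}=\mathbf{N}_{\kappa-\kappa_1}$; since $\tau=a_1^2\varphi_1$ and scaling by a positive constant changes neither the number of negative squares nor the asymptotic pattern, this is equivalent to $\varphi_1\in\mathbf{N}_{\kappa-\kappa_1}$, which together with \eqref{2.6} says exactly $\varphi_1\in\cI_{\kappa-\kappa_1}({\bf s}^{(1)},\ell-2n_1)$. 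For the moment sets, Proposition~\ref{lem:2.15}(iv) gives $\varphi\in\mathbf{N}_{\kappa,-\ell}\Leftrightarrow\tau\in\mathbf{N}_{\kappa-\kappa_1,-(\ell-2n_1)}$ (the side condition $\tau=o(1)$ being automatic from \eqref{eq:2.4E}), and the same positive scaling argument turns this into $\varphi_1\in\cM_{\kappa-\kappa_1}({\bf s}^{(1)},\ell-2n_1)$. I would state the scaling invariance $\tau\in\mathbf{N}_{\kappa',-\ell'}\Leftrightarrow a_1^{-2}\tau\in\mathbf{N}_{\kappa',-\ell'}$ explicitly, as it is used in both directions.

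It remains to note that \eqref{2.25} is genuinely a bijection of the ambient function sets: solving \eqref{eq:2.4a} for $\tau$ gives $\tau=-1/\varphi-\varepsilon_1 p_1$, so $\varphi\mapsto\tau$ is an invertible fractional-linear map, and $\tau=a_1^2\varphi_1$ is invertible since $a_1>0$ is fixed; composing, the correspondence $\varphi\leftrightarrow\varphi_1$ of \eqref{2.25} is one-to-one, and the previous paragraph shows it restricts to bijections between the respective $\cM$- and $\cI$-sets. The only point I expect to need care is bookkeeping rather than substance: confirming that the index shifts ($\kappa\to\kappa-\kappa_1$, $\ell\to\ell-2n_1$), the truncation lengths in \eqref{eq:2.4E} versus \eqref{2.6}, and the choice of $a_1$ all line up. In the degenerate borderline where the induced sequence vanishes identically, no nonzero first coefficient is available to fix $a_1$; there one simply fixes any $a_1>0$ (the reduced problem falling under Case (A), cf.\ Lemma~\ref{prop:3.2}), and the bijection is unaffected.
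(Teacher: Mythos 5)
Your proof is correct and takes essentially the same route as the paper: Proposition~\ref{prop:4.2} is stated there as a summary of the immediately preceding considerations, which are exactly your specialization $m=n_1-1$ of Lemma~\ref{cor:2.4}(iii) combined with Lemma~\ref{lem:2.15} and the identification $\tau=a_1^2\varphi_1$ recorded in \eqref{2.7}. The extra points you flag --- explicit invertibility of the fractional-linear map, invariance of the classes under the positive scaling by $a_1^2$, the convention for $a_1$ when the induced sequence vanishes identically, and invoking part (iv) of Lemma~\ref{lem:2.15} (where the paper's text misprints ``(iii)'') --- only make explicit what the paper leaves implicit.
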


The normal indices of the induced Hankel matrix $
S_{n-n_1}^{(1)}=(s_{i+j}^{(1)})_{i,j=0}^{n-n_1}$ can be derived from
the normal indices of the original Hankel matrix $S_n$. This is
given in the next Proposition.
\begin{prop}\label{prop:4.3}
Let $n_1<n_2<\dots<n_N(\le N+1)$ be all normal indices of the Hankel
matrix $ S_{n}$.  Then the normal indices of the induced Hankel
matrix $ S_{n-n_1}^{(1)}$ are
\[
  n_2-n_1 <\dots<n_N-n_1 .
\]
\end{prop}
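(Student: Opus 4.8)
The plan is to reduce the claim to a single determinantal equivalence and then read off the normal indices combinatorially. By definition the normal indices of $S_{n-n_1}^{(1)}$ are those $k$ with $1\le k\le n-n_1+1$ for which $\det S_{k-1}^{(1)}\ne 0$, so I would first establish that for every such $k$,
\begin{equation*}
\det S_{k-1}^{(1)}\ne 0 \quad\Longleftrightarrow\quad \det S_{k+n_1-1}\ne 0. \tag{$*$}
\end{equation*}
Granting $(*)$, the index $k$ is normal for $S_{n-n_1}^{(1)}$ exactly when $k+n_1$ is normal for $S_n$; since $k+n_1$ runs through $n_1+1,\dots,n+1$ as $k$ runs through $1,\dots,n-n_1+1$, and since $n_1<\dots<n_N\le n+1$ are all the normal indices of $S_n$, the ones exceeding $n_1$ are precisely $n_2,\dots,n_N$. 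This yields the asserted normal indices $n_2-n_1<\dots<n_N-n_1$.

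To prove $(*)$ I would pass through the reciprocal series. Writing $c_i:=s_{n_1-1+i}$ (so $c_0=s_{n_1-1}$ with $|c_0|=1$ by normalization) and letting $C:=T(c_0,c_1,\dots)$ be the associated upper triangular Toeplitz matrix of the relevant size, Lemma~\ref{lem:2.3} shows that $C^{-1}=T(d_0,d_1,\dots)$ is again upper triangular Toeplitz, with $d_i$ the coefficients of $1/c(\lambda)$ and $\det C=\pm 1$. The defining relation~\eqref{2.7} is exactly $C\,T(p_{n_1}^{(1)},\dots,p_0^{(1)},-\varepsilon_1a_1^2 s_0^{(1)},\dots)=\varepsilon_1 I$, so comparing entries gives $p_{n_1-i}^{(1)}=\varepsilon_1 d_i$ for $0\le i\le n_1$ and $s_j^{(1)}=-d_{n_1+1+j}/a_1^2$. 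Hence $\det S_{k-1}^{(1)}=(-a_1^{-2})^{k}\det(d_{n_1+1+i+j})_{i,j=0}^{k-1}$, and $(*)$ becomes a reciprocity between a Hankel minor of $d$ and the delayed Hankel minor of ${\bf s}$.

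The core computation would be the following Hankel--Toeplitz flip. Because $s_j=0$ for $j<n_1-1$, reflecting the rows of $S_{k+n_1-1}$ by the flip $J_{k+n_1}$ of~\eqref{TSid2} produces the Toeplitz matrix $\mathcal T:=(c_{k+j-i})_{i,j=0}^{k+n_1-1}$, so that $\det S_{k+n_1-1}=\pm\det\mathcal T$. Multiplying $\mathcal T$ on the right by $C^{-1}$ and invoking the convolution identity $c*d=1$ collapses the bottom $n_1$ rows of $\mathcal T\,C^{-1}$ to a permutation block with zeros to its right; thus $\mathcal T\,C^{-1}$ is block anti-triangular and $\det\mathcal T=\pm\det Y$, where $Y$ is the remaining $k\times k$ block. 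A reindexing then identifies $Y=\pm J_k\,\mathcal C\,H$, with $H=(d_{n_1+1+i+j})_{i,j=0}^{k-1}$ the Hankel minor of $d$ and $\mathcal C$ an invertible lower triangular Toeplitz matrix (diagonal $c_0=\pm1$), whence $\det S_{k+n_1-1}=\pm\det H=\pm a_1^{2k}\det S_{k-1}^{(1)}$ and $(*)$ follows.

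The main obstacle is precisely the index and sign bookkeeping in the last paragraph: verifying that right multiplication by $C^{-1}$ leaves a surviving block that is the $d$-Hankel minor up to an invertible triangular factor. Conceptually this is the statement that passage to the reciprocal symbol interchanges a shifted Toeplitz minor of $c$ with a Hankel minor of $d=1/c$, and it could equally be extracted from Jacobi's identity relating the complementary minors of $C$ and $C^{-1}$. The normalization $|s_{n_1-1}|=1$, which makes $\det C=\pm1$, is what guarantees that no spurious scalar survives, so that only the harmless nonzero factor $a_1^{2k}$ remains.
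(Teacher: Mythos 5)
Your argument is correct, and it is genuinely different from the paper's. The paper disposes of this proposition in one line by invoking \eqref{2.9b} of Lemma~\ref{lem:2.9}: there, starting from the same matrix identity \eqref{2.7}, both sides are flipped by the matrices $J$ of \eqref{TSid2} and the block-inversion lemma of the Appendix is applied, producing the congruence \eqref{2.13} which exhibits $a_1^2J_{i+1}S_i^{(1)}J_{i+1}$ as $A_{13}^{-1}(A_{33}-A_{23}^*A_{22}^{-1}A_{23})A_{13}^{-1}$, i.e.\ a congruence transform of the Schur complement of $S_{n_1-1}$ in $S_{i+n_1}$; inertia additivity for Schur complements then yields the full signed relations \eqref{2.9a}--\eqref{2.9b}. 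Your route instead reads off from \eqref{2.7} that the induced moments are (up to the factor $-a_1^{-2}$) the coefficients of the reciprocal series, and converts the statement into a Toeplitz--Hankel determinant identity: I checked your bookkeeping, and it does close up --- the bottom $n_1$ rows of $\mathcal T\,C^{-1}$ are exactly $\begin{pmatrix} I_{n_1} & 0\end{pmatrix}$ by the truncated convolution identity, and the surviving $k\times k$ block $Y$ satisfies $J_kY=-\mathcal C H$ with $\mathcal C$ lower triangular Toeplitz with diagonal $c_0$, giving $\det S_{k+n_1-1}=\pm a_1^{2k}\det S_{k-1}^{(1)}$ and hence your equivalence $(*)$. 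The trade-off is this: your computation is more elementary and even sharper at the level of determinants (an explicit identity rather than a mere equivalence), but it produces no signature information, whereas the paper's Schur-complement congruence gives the inertia formulas $\nu_\pm(S_i^{(1)})=\nu_\pm(S_{i+n_1})-\nu_\pm(S_{n_1-1})$ that are indispensable later (Corollary~\ref{cor:2.3} and, through it, Theorems~\ref{thm:3.1}, \ref{thm:3.2} and \ref{thm:5.6}); so your proof establishes the proposition but could not serve as a substitute for Lemma~\ref{lem:2.9} in the rest of the paper. One small correction: the normalization $|s_{n_1-1}|=1$ is not what saves the argument --- any $c_0\neq 0$ would do, since the stray factors $c_0^{k+n_1}$ and $c_0^{k}$ are nonzero and therefore harmless for the nonvanishing equivalence.
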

\begin{proof}
It follows from \eqref{2.9b} in Lemma~\ref{lem:2.9} that $ \det
S_{i-n_1}^{(1)}\ne 0$ if and only if $i=n_2,\dots,n_N$ $(n_1\le i\le
n)$.
\end{proof}
Now applying Proposition~\ref{prop:4.2} to the matrix
$S_{n-n_1}^{(1)}$ one construct a polynomial $p_2$ and a Hankel
matrix $S_{n-n_2}^{(2)}$. After $N $ inductive steps one obtains the
Hankel matrix $S_{n-n_N}^{(N)}$ of induced moments and subsequent
application of Lemma~\ref{lem:2.9} yields the following
\begin{cor} \label{cor:2.3}

       Let $S_{n-n_j}^{(j)}$ be the Hankel matrix of induced
moments after $j$ steps $(1\le j\le N)$. Then the set of normal
indices of the Hankel matrix $ S_{n-n_j}^{(j)}$ takes the form $
\{n_k-n_j\}_{k=j+1}^N$.  Moreover, for all $i$ such that $n_j\le
i\le n$ one has
\begin{equation} \label{2.18a}
\nu_\pm(S_{i-n_j}^{(j)})=\nu_\pm(S_{i})-\nu_\pm(S_{n_j-1});
 \end{equation}
 \begin{equation} \label{2.18b}
\nu_0(S_{i-n_j}^{(j)})=\nu_0(S_{i}).
  \end{equation}
In particular, the matrix $S_{n-n_N}^{(N)}$ has no normal indices
anymore, that is $\mbox{det }S_{i-n_N}^{(N)}=0$ for all $i$ such
that $n_N\le i\le n$.
\end{cor}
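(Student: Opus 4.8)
The plan is to prove the statement by induction on the step number $j$, carrying the three assertions --- the description of the normal indices of $S_{n-n_j}^{(j)}$ together with the two inertia formulas \eqref{2.18a} and \eqref{2.18b} --- \emph{simultaneously} in one induction hypothesis. The coupling is essential: formula \eqref{2.18b} is exactly what identifies which indices of $S_{n-n_j}^{(j)}$ are normal, and the location of the first normal index of $S_{n-n_j}^{(j)}$ is in turn what licenses the next reduction step and tells us at which position to invoke Lemma~\ref{lem:2.9}. The base case $j=1$ is already available: the normal-index part is Proposition~\ref{prop:4.3}, while \eqref{2.18a} and \eqref{2.18b} for $j=1$ are precisely the single-step relations furnished by Lemma~\ref{lem:2.9} (via \eqref{2.9b} and its inertia counterparts) applied to $S_n$ reduced at $n_1$.

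For the inductive step I would assume the three assertions for $j-1$ and pass to $j$. By the induction hypothesis the normal indices of $S_{n-n_{j-1}}^{(j-1)}$ are $\{n_k-n_{j-1}\}_{k=j}^N$, so its first normal index is $n_j-n_{j-1}$; hence the $j$-th reduction of Proposition~\ref{prop:4.2} is exactly the one-step reduction of $S_{n-n_{j-1}}^{(j-1)}$ at its first normal index, and Lemma~\ref{lem:2.9} (which applies to an arbitrary Hankel matrix) may be invoked for the pair $(S^{(j-1)},S^{(j)})$. Specializing its general index to $i'=i-n_{j-1}$ with $n_j\le i\le n$, so that $i'-(n_j-n_{j-1})=i-n_j$, the single-step relation reads
\[
 \nu_\pm\big(S_{i-n_j}^{(j)}\big)
 =\nu_\pm\big(S_{i-n_{j-1}}^{(j-1)}\big)
 -\nu_\pm\big(S_{(n_j-n_{j-1})-1}^{(j-1)}\big),
\]
and similarly $\nu_0(S_{i-n_j}^{(j)})=\nu_0(S_{i-n_{j-1}}^{(j-1)})$.

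Now I would feed the induction hypothesis \eqref{2.18a} for $j-1$ into both terms on the right: the first equals $\nu_\pm(S_i)-\nu_\pm(S_{n_{j-1}-1})$, and the second, read with $i=n_j-1$ (legitimate since $n_{j-1}\le n_j-1\le n$), equals $\nu_\pm(S_{n_j-1})-\nu_\pm(S_{n_{j-1}-1})$. Subtracting, the two $\nu_\pm(S_{n_{j-1}-1})$ terms telescope away and leave $\nu_\pm(S_{i-n_j}^{(j)})=\nu_\pm(S_i)-\nu_\pm(S_{n_j-1})$, which is \eqref{2.18a} for $j$; the same substitution in the $\nu_0$ relation, combined with $\nu_0(S_{i-n_{j-1}}^{(j-1)})=\nu_0(S_i)$, gives \eqref{2.18b}. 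The normal-index description for $j$ then follows formally: since $\det S_m\ne 0\iff\nu_0(S_m)=0$, an index $m$ is normal for $S_{n-n_j}^{(j)}$ iff $\nu_0(S_{i-n_j}^{(j)})=0$ with $i=m-1+n_j$, i.e. by \eqref{2.18b} iff $\nu_0(S_{m-1+n_j})=0$, i.e. iff $m+n_j$ is a normal index of $S_n$ exceeding $n_j$; these are exactly $m\in\{n_k-n_j\}_{k=j+1}^N$. Taking $j=N$ the index set is empty, which is the concluding ``in particular'' claim.

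The main obstacle I anticipate is not any single computation but the bookkeeping that keeps the induction self-consistent: one must verify at each step that the first normal index of the induced matrix lands where the hypothesis predicts (so that Lemma~\ref{lem:2.9} is invoked at the correct position and Proposition~\ref{prop:4.2} is applicable), and one must check the range conditions $n_{j-1}\le n_j-1\le n$ and $n_j\le i\le n$ that make the hypothesis usable at the arguments $i$ and $n_j-1$. Once the index ranges are pinned down, the telescoping cancellation is automatic and all three assertions propagate together.
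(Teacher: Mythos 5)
Your proposal is correct and follows essentially the same route as the paper: the paper also proves \eqref{2.18a} by induction on $j$, applying the one-step relation \eqref{2.9a} of Lemma~\ref{lem:2.9} to the pair $(S^{(j)},S^{(j+1)})$ and telescoping via the induction hypothesis evaluated at $i$ and at $i=n_{j+1}-1$, while the normal-index description and \eqref{2.18b} are read off from \eqref{2.9b}. Your only refinement is to package all three assertions into one simultaneous induction (making explicit that the induced matrix's first normal index is where Lemma~\ref{lem:2.9} must be applied), which the paper leaves implicit but which changes nothing of substance.
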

   \begin{proof}
The first statement is implied by the formula~\eqref{2.9b} in
Lemma~\ref{lem:2.9}. The formula \eqref{2.18a} can be obtained by
induction. Indeed, for $j=1$ the statement is contained
in~\eqref{2.9a} of Lemma~\ref{lem:2.9}. Assume that \eqref{2.18a}
holds for some $j$ $(1\le j\le N)$ and all $i$ such that $n_j\le
i\le n$. Then it follows from~\eqref{2.9a} that
\[
\nu_\pm(S_{i-n_{j+1}}^{(j+1)})
=\nu_\pm(S_{i-n_j}^{(j)})-\nu_\pm(S_{n_{j+1}-n_j-1}^{(j)}).
\]
In view of the induction assumption this yields
\[
\begin{split}
\nu_\pm(S_{i-n_{j+1}}^{(j+1)})
&=\nu_\pm(S_{i})-\nu_\pm(S_{n_j-1})-(\nu_\pm(S_{n_{j+1}-1})-\nu_\pm(S_{n_j-1}))\\
&=\nu_\pm(S_{i})-\nu_\pm(S_{n_{j+1}-1}).
\end{split}
\]
The formula~\eqref{2.18b} is immediate from \eqref{2.9b} in
Lemma~\ref{lem:2.9}.
 \end{proof}

\subsection{Algorithm}

Let us define a sequence $\kappa_1\le\dots\le\kappa_N$  by the
equalities
\begin{equation}\label{Indices}
    \kappa_j=\nu_-(S_{n_{j}-1}),\quad j=1,\dots,N.
\end{equation}
Due to Proposition~\ref{prop:4.2} on each step one obtains a linear
fractional transformation
   \begin{equation} \label{2.25a}
\varphi_{j-1}(\lambda)=\cT_j[\varphi_{j}(\lambda)]:=
\frac{-\varepsilon_j}{p_j(\lambda)+\varepsilon_ja_j^2\varphi_{j}(\lambda)},\quad
   \end{equation}
where
\[
\varepsilon_j=\mbox{sign } s^{(j-1)}_{n_j-n_{j-1}-1}(=\pm 1),\quad
a_j>0\quad (0\le j\le N-1).
\]
The transformation $\cT_j$ establishes a one-to-one correspondence
between the sets $\varphi_{j-1}\in\cM_{\kappa-\kappa_{j-1}}({\bf
s}^{(j-1)},\ell-2n_{j-1})$ and
$\varphi_{j}\in\cM_{\kappa-\kappa_{j}}({\bf s}^{(j)},\ell-2n_{j})$.
Let $W_j(\lambda)$ be the matrix
\begin{equation}\label{Wj}
W_j(\lambda)=\begin{pmatrix}0 & -\frac{\varepsilon_j}{a_j}\\
                            \varepsilon_ja_j &  \frac{p_j(\lambda)}{a_j}
                            \end{pmatrix},\quad j\in\dN.
\end{equation}
associated with the transformation $\cT_j$ $(1\le j\le N)$.

After the $j$-th step we obtain the following representation for the
solution $\varphi$ of the moment problem ${MP}_\kappa({\bf s},\ell)$
\begin{equation}\label{ContF}
\begin{split}
\varphi(\lambda)&= \cT_{1}\circ\dots\circ
\cT_{j}[\varphi_{j}(\lambda)]\\
& =-\frac{\varepsilon_1}{p_1(\lambda)}
\begin{array}{l} \\ -\end{array}
\frac{\varepsilon_1\varepsilon_2a_1^2}{p_2(\lambda)}
\begin{array}{ccc} \\ - & \cdots & -\end{array}
\frac{\varepsilon_{j-1}\varepsilon_{j}a_{j-1}^2}{p_j(\lambda)+\varepsilon_{j}a_{j}^2\varphi_{j}(\lambda)},
\end{split}
\end{equation}
where the last formula stands for the continuous fraction expansion
(this shorthand notation is often used in the literature).  The
resulting matrix $W_{[1,j]}(\lambda)$ of the linear fractional
transformation in \eqref{ContF} coincides with the product of the
matrices $W_i$ $(1\le i\le j)$
\begin{equation}\label{eq:W}
    W_{[1,j]}(\lambda)=W_1(\lambda)\dots W_j(\lambda)\quad (j\le N).
\end{equation}

   \begin{thm} \label{thm:2.2}
Let $n_1<\dots<n_N(\le n)$ be a sequence of all normal indices of
$S_n$ and let the matrix $W_{[1,j]}(\lambda)=\begin{pmatrix}
  w_{11}^{(j)}(\lambda) & w_{12}^{(j)}(\lambda) \\
  w_{21}^{(j)}(\lambda) & w_{22}^{(j)}(\lambda) \\
\end{pmatrix}$ be given by
\eqref{eq:W}. Then for every $j\le N-1$ the formula
\begin{equation}\label{2.27}
    \varphi(\lambda)=\cT_{W_{[1,j]}(\lambda)}[\varphi_{j}(\lambda)]:=\frac{w_{11}^{(j)}(\lambda)\varphi_j(\lambda)+w_{12}^{(j)}(\lambda)}
    {w_{21}^{(j)}(\lambda)\varphi_j(\lambda)+w_{22}^{(j)}(\lambda)},
\end{equation}
establishes a one-to-one correspondence between the sets
${\cI}_\kappa({\bf s},\ell)$ and  ${\cI}_{\kappa-\kappa_{j}}({\bf
s}^{(j)},\ell-2n_j)$, where ${\bf s}^{(j)}$ is defined recursively
by~\eqref{2.7} and $\kappa_{j}=\nu_-(S_{n_j-1})$. Moreover,
\[
\varphi\in{\cM}_\kappa({\bf
s},\ell)\Leftrightarrow\varphi_j\in{\cM}_{\kappa-\kappa_{j}}({\bf
s}^{(j)},\ell-2n_j)
\]

In the case, when $\det S_n=0$ the statement remains valid for
$j=N$.
   \end{thm}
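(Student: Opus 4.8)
The plan is to argue by induction on $j$, using Proposition~\ref{prop:4.2} as the elementary one-step reduction and Corollary~\ref{cor:2.3} (together with Proposition~\ref{prop:4.3}) to propagate both the normal indices and the negative indices through the recursion. The bijectivity will be inherited at each step from Proposition~\ref{prop:4.2}; the real work is the index bookkeeping and identifying exactly when the terminal step is permitted.

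For the \emph{base case} $j=1$ one has $W_{[1,1]}(\lambda)=W_1(\lambda)$, and I would first check by a one-line computation from~\eqref{Wj} that the associated linear fractional transformation agrees with the map $T_1$ of~\eqref{2.25}:
\[
\cT_{W_1(\lambda)}[\varphi_1]
=\frac{-\varepsilon_1/a_1}{\varepsilon_1 a_1\varphi_1+p_1(\lambda)/a_1}
=\frac{-\varepsilon_1}{p_1(\lambda)+\varepsilon_1 a_1^2\varphi_1}.
\]
Thus the assertion for $j=1$ is exactly Proposition~\ref{prop:4.2} (its hypothesis $n_1\le n$ being covered by the range discussion below).

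For the \emph{inductive step} I would assume that $\varphi=\cT_{W_{[1,j-1]}(\lambda)}[\varphi_{j-1}]$ gives a bijection between $\cI_\kappa({\bf s},\ell)$ and $\cI_{\kappa-\kappa_{j-1}}({\bf s}^{(j-1)},\ell-2n_{j-1})$, and likewise for the $\cM$-sets. By Proposition~\ref{prop:4.3} the first normal index of the induced matrix $S_{n-n_{j-1}}^{(j-1)}$ is $n_j-n_{j-1}$, so Proposition~\ref{prop:4.2} applied to $S_{n-n_{j-1}}^{(j-1)}$ yields the one-step bijection $\varphi_{j-1}=\cT_j[\varphi_j]$ of~\eqref{2.25a} onto $\cI_{(\kappa-\kappa_{j-1})-\mu}({\bf s}^{(j)},\ell-2n_j)$, where $\mu=\nu_-(S^{(j-1)}_{n_j-n_{j-1}-1})$. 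The key bookkeeping step is that, by~\eqref{2.18a} (with $i=n_j-1$),
\[
\mu=\nu_-(S_{n_j-1})-\nu_-(S_{n_{j-1}-1})=\kappa_j-\kappa_{j-1},
\]
so the index of the target class reduces to $(\kappa-\kappa_{j-1})-(\kappa_j-\kappa_{j-1})=\kappa-\kappa_j$, as required. Composing the two bijections and using that linear fractional maps compose by matrix multiplication together with $W_{[1,j]}=W_{[1,j-1]}W_j$ from~\eqref{eq:W}, I obtain $\cT_{W_{[1,j-1]}}\circ\cT_j=\cT_{W_{[1,j]}}$, which closes the induction for $\cI$; the $\cM$-version follows verbatim from the $\cM$-part of Proposition~\ref{prop:4.2}.

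The only genuinely delicate point, and the one I would treat most carefully, is the range of admissible $j$. To reach step $j$ every reduction $\cT_1,\dots,\cT_j$ must be applicable, and since the normal indices are strictly increasing the binding constraint is that $S_{n-n_{j-1}}^{(j-1)}$ be non-basic, i.e.\ $n_j\le n$. For $j\le N-1$ this holds automatically, since $n_j\le n_{N-1}\le n_N-1\le n$, so~\eqref{2.27} is valid unconditionally. For $j=N$ the condition $n_N\le n$ is exactly the degeneracy hypothesis $\det S_n=0$ (in the nondegenerate case $n_N=n+1$ and the terminal step cannot be carried out), which accounts for the last sentence of the theorem. The main obstacle is therefore not the bijectivity itself---this is inherited step by step from Proposition~\ref{prop:4.2}---but the index arithmetic in~\eqref{2.18a} and the applicability threshold $n_j\le n$ at the final step.
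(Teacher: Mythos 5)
Your proof is correct and follows essentially the same route as the paper: the paper's own (two-sentence) proof is precisely "successive application" of Proposition~\ref{prop:4.2}, Proposition~\ref{prop:4.3}, and Corollary~\ref{cor:2.3}, terminating at $j=N-1$ when $n<n_N$ and allowing one further step when $\det S_n=0$ (i.e.\ $n_N\le n$). You have merely made the induction, the index bookkeeping via~\eqref{2.18a}, and the composition rule $\cT_{W_{[1,j-1]}}\circ\cT_{W_j}=\cT_{W_{[1,j]}}$ explicit, which the paper leaves implicit.
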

   \begin{proof}  The proof is obtained by successive application of
the Schur algorithm described above and Propositions~\ref{prop:4.2},
\ref{prop:4.3}, and   Corollary~\ref{cor:2.3} to the problem
${MP}_\kappa({\bf   s},\ell)$. In the nondegenerate case this
process terminates when $j=N-1$, since $n<n_N$. In the degenerate
case Propositions~\ref{prop:4.2} can be applied one more time, since
$n\ge n_N$.
   \end{proof}
To find an explicit form of the matrix $W_{[1,j]}(\lambda)$ let us
define the so-called polynomials $P_j(\lambda)$ and $Q_j(\lambda)$
of the first and the second kind, respectively, as solutions of the
difference equation
\begin{equation}\label{DifEq}
\varepsilon_{j-1}\varepsilon_j
a_{j-1}u_{j-2}-p_j(\lambda)u_{j-1}+a_{j}u_{j}=0\,\,\,(j=\overline{1,N}),
\end{equation}
with the initial conditions
\begin{equation}\label{Init}
\begin{split}
    P_0(\lambda)&=1,\quad P_1(\lambda)=\frac{p_1(\lambda)}{a_1},\\
     Q_0(\lambda)&=0,\quad Q_1(\lambda)=\frac{\varepsilon_1}{a_1}.
     \end{split}
\end{equation}
As is easily seen from~\eqref{DifEq}
\[
\deg P_j=\sum_{i=1}^jn_i,\,\,\deg Q_j=\sum_{i=1}^{j-1}n_i\quad (j\ge
1).
\]
   \begin{thm} \label{thm:4.6}
The resolvent  matrix $W_{[1,j]}(\lambda)$ in \eqref{eq:W} admits
the following representation
\begin{equation}\label{2.26}
W_{[1,j]}(\lambda)=\begin{pmatrix}
  -\varepsilon_{j}a_{j} Q_{{j-1}}(\lambda) & -Q_{{j}}(\lambda) \\
  \varepsilon_{j}a_{j} P_{{j-1}}(\lambda) & P_{j}(\lambda) \\
\end{pmatrix},
\end{equation}
where $P_{j}$ and $Q_{j}$ $(1\le j\le N)$ are polynomials of the
first and the second kind associated with the matrix $S_n$
via~\eqref{DifEq}, \eqref{Init}.
   \end{thm}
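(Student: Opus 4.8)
The plan is to prove the identity \eqref{2.26} by induction on $j$, with the three-term recurrence \eqref{DifEq} and its initial conditions \eqref{Init} supplying exactly the algebraic relations needed to carry the induction forward one step at a time.

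First I would verify the base case $j=1$. Since $W_{[1,1]}=W_1$, it suffices to compare \eqref{Wj} with the right-hand side of \eqref{2.26} specialized to $j=1$. The latter reads $\begin{pmatrix} -\varepsilon_1 a_1 Q_0 & -Q_1 \\ \varepsilon_1 a_1 P_0 & P_1 \end{pmatrix}$, and substituting the initial values $P_0=1$, $Q_0=0$, $P_1=p_1(\lambda)/a_1$, $Q_1=\varepsilon_1/a_1$ from \eqref{Init} reproduces $W_1(\lambda)$ entry by entry.

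For the inductive step I would assume \eqref{2.26} holds for some $j$ and use the factorization $W_{[1,j+1]}(\lambda)=W_{[1,j]}(\lambda)\,W_{j+1}(\lambda)$ coming from \eqref{eq:W}. Substituting the induction hypothesis for $W_{[1,j]}$ and the explicit form \eqref{Wj} for $W_{j+1}$, I multiply out the $2\times 2$ product. The first column is immediate: because $W_{j+1}$ has a zero in its top-left entry, the $(1,1)$ and $(2,1)$ entries of the product collapse to $-\varepsilon_{j+1}a_{j+1}Q_j$ and $\varepsilon_{j+1}a_{j+1}P_j$, which are precisely the prescribed first-column entries at index $j+1$. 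The second column is where the recurrence enters: the $(2,2)$ entry works out to $a_{j+1}^{-1}\bigl(p_{j+1}(\lambda)P_j-\varepsilon_j\varepsilon_{j+1}a_j P_{j-1}\bigr)$, and solving \eqref{DifEq} at index $j+1$ for the top term shows this equals $P_{j+1}$; the $(1,2)$ entry is handled identically and equals $-Q_{j+1}$.

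The computation is routine, so there is no serious obstacle here; the only points that require care are the bookkeeping of the signs $\varepsilon_j$ and the precise index shift in \eqref{DifEq}, namely applying that relation with $j$ replaced by $j+1$. Since $P_j$ and $Q_j$ obey the same recurrence and differ only in their initial data, the two columns are treated in complete parallel, and the induction closes.
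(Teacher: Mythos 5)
Your proof is correct and follows essentially the same route as the paper's: induction on $j$ anchored by checking $W_{[1,1]}=W_1$ against \eqref{Init}, then using the factorization $W_{[1,j+1]}=W_{[1,j]}W_{j+1}$ together with the zero in the top-left entry of \eqref{Wj} for the first column and the recurrence \eqref{DifEq} (with the appropriate index shift) for the second. The only differences are cosmetic: the paper writes the step as $W_{[1,j]}=W_{[1,j-1]}W_j$ and extracts the columns by applying the product to the standard basis vectors, which is exactly your entry-by-entry computation in different notation.
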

   \begin{proof} For $j=1$ the formula~\eqref{2.26} coincides
   with~\eqref{Wj}. Proceed by induction and assume that~\eqref{2.26} holds for
   $j:=j-1$.
Then it follows from~\eqref{2.26},~\eqref{Wj} and  the difference
equation~\eqref{DifEq} that
\[
\begin{split}
 W_{[1,j]}(\lambda)\left(%
\begin{array}{c}
  0 \\
  1 \\
\end{array}%
\right) &= W_{[1,j-1]}(\lambda) W_j(\lambda)\left(%
\begin{array}{c}
  0 \\
  1 \\
\end{array}\right)\\&=\begin{pmatrix}
  -\varepsilon_{j-1}a_{j-1} Q_{{j-2}}(\lambda) & -Q_{{j-1}}(\lambda) \\
  \varepsilon_{j-1}a_{j-1} P_{{j-2}}(\lambda) & P_{j-1}(\lambda) \\
\end{pmatrix}
\left(%
\begin{array}{c}
  -\varepsilon_j/a_j \\
  p_j(\lambda)/a_j \\
\end{array}%
\right)\\&=\frac{1}{a_j}\begin{pmatrix}
  \varepsilon_{j-1}\varepsilon_ja_{j-1} Q_{{j-2}}(\lambda) -p_j(\lambda)Q_{{j-1}}(\lambda) \\
  -\varepsilon_{j-1}\varepsilon_ja_{j-1} P_{{j-2}}(\lambda) +p_j
  P_{j-1}(\lambda)
\end{pmatrix}.
\end{split}
\]
Due to the difference equation~\eqref{DifEq}
\begin{equation}\label{eq:W01}
    W_{[1,j]}(\lambda)\left(%
\begin{array}{c}
  0 \\
  1 \\
\end{array}\right)=\begin{pmatrix}
  -Q_{{j}}(\lambda) \\
   P_{j}(\lambda) \\
\end{pmatrix}.
\end{equation}
Hence one obtains
\begin{equation}\label{eq:W10}
    W_{[1,j]}(\lambda)\left(%
\begin{array}{c}
  1 \\
  0 \\
\end{array}\right)=W_{[1,j-1]}(\lambda)
\left(%
\begin{array}{c}
  0 \\
\varepsilon_ja_{j} \\
\end{array}\right)
=\varepsilon_ja_{j}
\begin{pmatrix}
  -Q_{{j-1}}(\lambda) \\
   P_{j-1}(\lambda) \\
\end{pmatrix}.
\end{equation}
 The formulas \eqref{eq:W01}-\eqref{eq:W10} prove \eqref{2.26}.
   \end{proof}
\begin{rem}
The recursion algorithm for Nevanlinna functions is well known (see
for example~\cite{Ach61}). The formula~\eqref{2.26} for the
resolvent matrix can be found in~\cite{Kr67}, were truncated moment
problems were studied. The operator approach to such problems was
presented in~\cite{DM91}, \cite{DM95}. In the indefinite case this
algorithm was studied by M. Derevyagin in~\cite{Derev03}, formulas
\eqref{2.26} for the matrix $W(\lambda)$ and the statement of the
theorem for the nondegenerate even moment problem were proven
in~\cite{DD07}. The linear fractional transformations similar to
$\cT_j$ (so-called Schur transform) has been studied by D.~Alpay,
A.~Dijksma and H.~Langer in~\cite{ADL04}, \cite{ADL07}.
   \end{rem}

\section{%Solutions with minimal negative signature}
Description of solutions} In this section we find a solvability
criterion and describe the set of solutions of the problems
${MP}_\kappa({\bf s},\ell)$ and ${IP}_\kappa({\bf s},\ell)$ in the
general setting. As well as in the case of basic problem we will
distinguish non-degenerate problems and two types of degenerate
problems:
\begin{enumerate}
   \item[(A)] $\rank S_n=n_N=\rank S_{n_N-1}$;
    \item[(B)] $\rank S_n>n_N=\rank S_{n_N-1}$.
\end{enumerate}
\subsection{Non-degenerate moment problem.}
   \begin{thm} \label{thm:5.1}
   Let $({\bf s},\ell)$ be a sequence of real numbers such that $\det S_n\ne
   0$,
Then the moment problem ${MP}_\kappa({\bf s},\ell)$ is solvable if
and only if
\[
\kappa\ge\nu_-.
\]
The sets $\cM_\kappa({\bf s},\ell)$ and $\cI_\kappa({\bf s},\ell)$
are parametrized by the formula~
\begin{equation}\label{2.27B}
    \varphi(\lambda)=\frac{w_{11}^{(N)}(\lambda)\tau(\lambda)+w_{12}^{(N)}(\lambda)}
    {w_{21}^{(N)}(\lambda)\tau(\lambda)+w_{22}^{(N)}(\lambda)},
\end{equation}
where in the even case
\[
\varphi\in\cM_\kappa({\bf s},\ell)\Leftrightarrow \tau \in
\mathbf{N}_{\kappa-\nu_-}\mbox{ and satisfies   (E) };
\]
and in the odd case
\[
\varphi\in\cI_\kappa({\bf s},\ell)\Leftrightarrow \tau \in
\mathbf{N}_{\kappa-\nu_-}\mbox{ and satisfies   (O) };
\]
\[
\varphi\in\cM_\kappa({\bf s},\ell)\Leftrightarrow \tau \in
\mathbf{N}_{\kappa-\nu_-,1}\mbox{ and satisfies   (O) }.
\]
   \end{thm}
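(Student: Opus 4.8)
The plan is to run the Schur--Chebyshev recursion of Theorem~\ref{thm:2.2} as far as it will go and then to finish off the resulting basic problem by means of Lemma~\ref{derev}. Since $\det S_n\ne 0$, the largest normal index of $S_n$ equals $n_N=n+1$, so the recursion can be applied through step $j=N-1$ but not at $j=N$ (as $n_N>n$). First I would invoke Theorem~\ref{thm:2.2} with $j=N-1$: the transformation \eqref{2.27} with matrix $W_{[1,N-1]}(\lambda)$ then establishes a one-to-one correspondence between $\cI_\kappa({\bf s},\ell)$ and $\cI_{\kappa-\kappa_{N-1}}({\bf s}^{(N-1)},\ell-2n_{N-1})$, and likewise for the sets $\cM$, where $\kappa_{N-1}=\nu_-(S_{n_{N-1}-1})$.

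Next I would check that the reduced problem is nondegenerate and basic. By Proposition~\ref{prop:4.3} (equivalently Corollary~\ref{cor:2.3}) the induced Hankel matrix $S^{(N-1)}_{n-n_{N-1}}$, which has order $n-n_{N-1}+1$, possesses the single normal index $n_N-n_{N-1}=(n-n_{N-1})+1$; since this equals its order, its determinant is nonzero while all smaller leading minors vanish. This is precisely the nondegenerate basic situation \eqref{eq:3.1}, so Lemma~\ref{derev} applies to ${\bf s}^{(N-1)}$ and provides both the solvability criterion and a description of the reduced solution $\varphi_{N-1}$ through \eqref{4.12}, with a parameter $\tau$ in the class prescribed there and satisfying the relevant condition (E) or (O) according to the parity of $\ell$.

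Then I would assemble the two pieces. The parametrization \eqref{4.12} is itself a linear fractional transformation $\varphi_{N-1}=\cT_{W_N}[\tau]$ whose matrix is the factor $W_N(\lambda)$ of \eqref{Wj}: the monic polynomial in \eqref{4.12} is the $p_N$ built from ${\bf s}^{(N-1)}$ as in \eqref{2.25a}, and using $\tau$ in place of the free parameter of the (non-performable) $N$-th reduction step differs only by the positive scaling $a_N^2$, which leaves membership in every class $\mathbf{N}_{\kappa'}$ unaffected. Composing with the correspondence of the first paragraph and invoking $W_{[1,N]}=W_{[1,N-1]}W_N$ from \eqref{eq:W} yields $\varphi=\cT_{W_{[1,N]}}[\tau]$, which is exactly \eqref{2.27B} with the entries $w_{ij}^{(N)}$ given by \eqref{2.26}.

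Finally I would carry out the index bookkeeping. Formula \eqref{2.18a} of Corollary~\ref{cor:2.3}, taken at $i=n$, gives $\nu_-(S^{(N-1)}_{n-n_{N-1}})=\nu_--\kappa_{N-1}$; hence the solvability bound $\kappa-\kappa_{N-1}\ge\nu_-(S^{(N-1)}_{n-n_{N-1}})$ from Lemma~\ref{derev} collapses to $\kappa\ge\nu_-$, and the parameter class $\mathbf{N}_{(\kappa-\kappa_{N-1})-\nu_-(S^{(N-1)}_{n-n_{N-1}})}$ becomes $\mathbf{N}_{\kappa-\nu_-}$ (respectively $\mathbf{N}_{\kappa-\nu_-,1}$ in the odd $\cM$-case). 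The even/odd split and the conditions (E)/(O) are inherited verbatim from Lemma~\ref{derev}, and the boundary case $N=1$ (already nondegenerate basic) is the instance $W_{[1,N]}=W_1$ handled directly there. I expect the main obstacle to be the identification in the third paragraph of the Lemma~\ref{derev} transform with the factor $W_N$---that is, verifying that the recursion, although it cannot be performed as a genuine reduction at step $N$, nonetheless contributes exactly the factor $W_N$, so that the resolvent matrix is $W_{[1,N]}$ and not merely $W_{[1,N-1]}$.
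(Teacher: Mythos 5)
Your proposal is correct and follows essentially the same route as the paper, whose proof is literally a one-line "compilation of Theorem~\ref{thm:2.2}, Theorem~\ref{thm:4.6} and Lemma~\ref{derev}": reduce via the Schur algorithm to step $N-1$, recognize the induced problem as nondegenerate basic (since $n_N=n+1$), solve it by Lemma~\ref{derev}, and reassemble the resolvent matrix as $W_{[1,N]}=W_{[1,N-1]}W_N$ with the index bookkeeping from Corollary~\ref{cor:2.3}. Your explicit treatment of the harmless $a_N^2$ rescaling of the parameter $\tau$ and of the boundary case $N=1$ only makes precise what the paper leaves implicit.
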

   \begin{proof}
The proof is obtained by compilation of Theorem~\ref{thm:2.2},
Theorem~\ref{thm:4.6} and Proposition~\ref{derev}.
   \end{proof}

This result seems to be new even for the odd Hamburger moment
problem.
\begin{cor}\label{cor:5.1}
    Let $s_0,s_1,\dots,s_{2n+1}$ be real numbers, such that $S_n>0$
    and $\det S_n\ne 0$. Then the moment problem~\eqref{0.1} with $\ell=2n+1$ is
    solvable and the formula~\eqref{0.4} describes the set of
    solutions of~\eqref{0.1} when $\tau$ is ranging over the class
    $\mathbf{N}_{0,1}$ and satisfies the condition~(O).
    Moreover, $\varphi\in\cI_0({\bf s},2n+1)$
    if and only if $\tau$ belongs to $\mathbf{N}_{0}$
    and satisfies (O).
\end{cor}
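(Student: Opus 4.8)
The plan is to read off Corollary~\ref{cor:5.1} from Theorem~\ref{thm:5.1} in the positive definite case $\kappa=0$, and then to rewrite the abstract parametrization~\eqref{2.27B} in the classical closed form~\eqref{0.4}. Recall from the Introduction that $MP_0({\bf s},2n+1)$ is exactly the odd truncated Hamburger moment problem~\eqref{0.1}. Since $S_n>0$ one has $\nu_-:=\nu_-(S_n)=0$, so together with $\det S_n\neq 0$ we are in the nondegenerate setting of Theorem~\ref{thm:5.1} with $\kappa=\nu_-=0$. Hence the criterion $\kappa\ge\nu_-$ holds, the problem is solvable, and Theorem~\ref{thm:5.1} already gives that $\cM_0({\bf s},2n+1)$ is parametrized by~\eqref{2.27B} with $\tau\in\mathbf{N}_{0,1}$ satisfying (O), while $\cI_0({\bf s},2n+1)$ is parametrized by the same formula with $\tau\in\mathbf{N}_0$ satisfying (O). Only the passage from~\eqref{2.27B} to~\eqref{0.4} remains.

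First I would determine the normal indices. As $S_n>0$, every leading principal minor is positive, so $\det S_{j-1}\neq 0$ for $j=1,\dots,n+1$; thus the normal indices are $n_j=j$ $(j=1,\dots,n+1)$, $N=n+1$, and each $p_j$ is monic of degree $n_j-n_{j-1}=1$. By Corollary~\ref{cor:2.3} the induced matrices satisfy $\nu_-(S_{i-n_j}^{(j)})=\nu_-(S_i)-\nu_-(S_{n_j-1})=0$ and so stay positive definite; consequently each leading induced moment is positive and $\varepsilon_j=+1$ for every $j$.

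With $N=n+1$ and $\varepsilon_{n+1}=+1$, the representation~\eqref{2.26} of Theorem~\ref{thm:4.6} reduces~\eqref{2.27B} to
\[
\varphi(\lambda)=-\frac{a_{n+1}Q_n(\lambda)\tau(\lambda)+Q_{n+1}(\lambda)}{a_{n+1}P_n(\lambda)\tau(\lambda)+P_{n+1}(\lambda)}.
\]
Since all $\varepsilon_j=+1$ and all $p_j$ are monic of degree one, the difference equation~\eqref{DifEq} with initial data~\eqref{Init} is precisely the classical three-term recurrence for the orthonormal polynomials of ${\mathfrak S}$; hence, up to a common normalizing constant, $P_j$ and $Q_j$ coincide with the polynomials of the first and second kind appearing in~\eqref{0.4}, and such a common factor cancels in the linear fractional transformation. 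Finally, because $a_{n+1}>0$, the substitution $\tau\mapsto a_{n+1}\tau$ is a bijection of $\mathbf{N}_{0,1}$ (respectively $\mathbf{N}_0$) onto itself preserving (O), and it turns the last display into~\eqref{0.4}. This yields both assertions of the corollary.

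The only genuinely delicate point is the identification of the Schur--Chebyshev polynomials $P_j,Q_j$ generated by~\eqref{DifEq}--\eqref{Init} with the classical orthonormal polynomials of~\eqref{0.4}, including the correct tracking of the normalizing constant $a_{n+1}$ and of the signs $\varepsilon_j$; once positive definiteness has been propagated through the algorithm via Corollary~\ref{cor:2.3}, everything else is routine bookkeeping.
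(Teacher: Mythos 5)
Your proposal is correct and follows essentially the paper's own route: the paper presents Corollary~\ref{cor:5.1} as an immediate consequence of Theorem~\ref{thm:5.1} (itself a compilation of Theorem~\ref{thm:2.2}, Theorem~\ref{thm:4.6} and Lemma~\ref{derev}), and you perform exactly that specialization to $\kappa=\nu_-(S_n)=0$. The extra steps you supply --- using Corollary~\ref{cor:2.3} to propagate positive definiteness so that $n_j=j$, $N=n+1$, $\varepsilon_j=+1$, identifying the recursion \eqref{DifEq}--\eqref{Init} with the classical three-term recurrence for the orthonormal polynomials of ${\mathfrak S}$, and absorbing the factor $a_{n+1}>0$ into the parameter $\tau$ --- are precisely the bookkeeping the paper leaves implicit, and you carry them out correctly.
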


The following example shows the importance of the condition
$\tau\in\mathbf{N}_{0,1}$ in Corollary~\ref{cor:5.1}.
\begin{ex}\label{ex:5.1}
Let $s_0=1$,$s_1=0$. Then $p(\lambda)=\lambda$ and the set of
    solutions of the problem $\cI_0({\bf
s},1)$  is described by
    \[
\varphi(\lambda)=\frac{-1}{p(\lambda)+\tau(\lambda)},
    \]
where $\tau\in\mathbf{N}_{0}$ and $\tau(\lambda)=o(1)$  as
$\lambda\hat\to\infty$. The function $\tau(\lambda)=-\frac{1}{i\ln
(1+\sqrt{\lambda})}$ belongs to the class $\mathbf{N}_{0}$ and
satisfies the condition $\tau(\lambda)=o(1)$  as
$\lambda\hat\to\infty$. Therefore,
\[
\varphi(\lambda)=\frac{-i\ln (1+\sqrt{\lambda})}{i\lambda\ln
(1+\sqrt{\lambda})-1}
\]
is a solution of the problem $IP_0({\bf s},1)$
\[
\varphi(\lambda)=\frac{-1}{\lambda}+o\left(\frac{1}{\lambda^2}\right)
\mbox{ as } \lambda\hat\to\infty.
\]
However, $\tau\not\in \mathbf{N}_{0,1}$ (see~\cite{HLS95}) and,
hence, $\varphi$ is not a solution of the moment
problem~\eqref{0.1}.
\end{ex}
\subsection{Degenerate moment problem. Case (A)}
In Theorem~\ref{thm:3.2} solvability criteria for degenerate moment
problems with minimal negative signature $\kappa=\nu_-(S_n)$ are
given. We start with two lemmas.
   \begin{lem}\label{lem:3.2}
    Let $({\bf s},2n)$ be a sequence of real numbers such that $\det
    S_n=0$.
Then the Hankel rank $n_N$ of the sequence $({\bf s},{2n})$
coincides with the largest normal index $n_N$ of the Hankel matrix
$S_n$.
\end{lem}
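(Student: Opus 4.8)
The plan is to derive the result directly from the two determinantal descriptions already recorded in the paper: the Frobenius characterization of the Hankel rank (via~\cite[Lemma X.10.1]{Ga}) and the definition of the normal indices. Both single out a distinguished integer through the pattern of vanishing and non-vanishing of the leading minors $\det S_0,\det S_1,\dots,\det S_n$, and the whole content of the lemma is that these two integers coincide. Since $\det S_n=0$ the problem is degenerate, so by the Frobenius theorem $r:=\rank({\bf s},2n)$ is the smallest integer with $1\le r\le n$ satisfying $\det S_{r-1}\ne 0$ and $\det S_j=0$ for all $j\ge r$.

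First I would observe that this $r$ is itself a normal index: the condition $\det S_{r-1}\ne 0$ is precisely the defining property for $r$ to be a normal index of $S_n$. Next I would show that no normal index exceeds $r$. If $j$ is an index with $r\le j\le n$, then $j+1>r$, and the Frobenius condition $\det S_j=0$ says exactly that $j+1$ is not a normal index; hence every normal index lies in $\{1,\dots,r\}$. Combining the two observations, $r$ is a normal index dominating all others, so $r=n_N$, the largest normal index, which is the asserted equality.

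For completeness I would also verify the converse reading, which makes the minimality transparent. By definition $\det S_{n_N-1}\ne 0$, and for each $j$ with $n_N\le j\le n$ the index $j+1$ exceeds the largest normal index, so $\det S_j=0$; thus $n_N$ fulfils both requirements of the Frobenius condition. Since $\det S_{n_N-1}\ne 0$ forbids any smaller $r$ from having $\det S_j=0$ for all $j\ge r$, the integer $n_N$ is the smallest such, i.e.\ $n_N=\rank({\bf s},2n)$.

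The point to be careful about, rather than a genuine obstacle, is that the normal indices need not form an initial segment $\{1,\dots,N\}$: for a Hankel matrix the sequence $\det S_0,\det S_1,\dots$ may vanish and then become nonzero again, so gaps among the normal indices are possible. The argument above never uses consecutiveness; it uses only that $n_N$ is the \emph{last} index with a nonzero leading minor, together with the hypothesis $\det S_n=0$, which guarantees $n_N\le n$ and places us in the degenerate branch of the Frobenius characterization. In the trivial case $s_0=\dots=s_n=0$ there are no normal indices and both quantities equal $0$ by convention.
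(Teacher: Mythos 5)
Your proposal is correct and follows essentially the same route as the paper's own proof: both invoke the Frobenius theorem \cite[Lemma X.10.1]{Ga} to pass from the span-based definition of the Hankel rank to the determinantal pattern ($\det S_{r-1}\ne 0$ and $\det S_j=0$ for $j\ge r$), and then observe that this pattern characterizes exactly the largest normal index $n_N$. Your additional remarks (the possible gaps in the sequence of normal indices, and the converse verification) are consistent with, and slightly more explicit than, the paper's terser argument, but introduce no new idea.
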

\begin{proof}
 By Frobenius Theorem (see~\cite[Lemma X.10.1]{Ga}), if $r$
is the smallest integer $r$ $(0\le r\le n)$, such that
~\eqref{eq:vect} holds, then $\mbox{det }S_{r-1}\ne 0$. Hence $r$ is
the normal index of $S_n$. Moreover, $r$ is the largest normal index
of $S_n$, since the vectors $( s_j,\dots,s_{j+n})^\top $, $(0\le
j\le n_N) $ in~\eqref{eq:vect} are linearly dependent. This implies
that $\det S_n=0$ for all $j\ge r$.
\end{proof}
   \begin{lem}\label{lem:5.4}
    Let $({\bf s},2n)$ be a sequence of real numbers such that $\det
    S_n=0$, let $n_N$ be the largest normal index of the Hankel matrix
$S_n$ and let $S_n$ admit a Hankel extension $S_{n+1}$, such that
$\nu_-(S_{n+1})=\nu_-(S_{n})$. Then there are real numbers
$\alpha_0,\dots,\alpha_{n_N-1}$, such that
   \begin{equation} \label{3.2}
s_{j} =\alpha_0s_{j-n_N}+\dots +\alpha_{n_N-1}s_{j-1}\quad (n_N\le
j\le 2n+1);
  \end{equation}
\begin{equation} \label{3.2b}
s_{2n}\ge\alpha_0s_{2n-n_N+2}+\dots +\alpha_{n_N-1}s_{2n+1}.
\end{equation}
   \end{lem}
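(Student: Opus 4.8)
The plan is to extract a single structural consequence from the matched extension hypothesis, read off the recurrence coefficients from the largest normal index, and then use the Hankel/shift structure to propagate the recurrence and to identify the Schur complement with the asserted inequality. First I would record the block consequence. Writing the extension in block form as $\left(\begin{smallmatrix} S_n & b \\ b^\top & s_{2n+2}\end{smallmatrix}\right)$ with $b=(s_{n+1},\dots,s_{2n+1})^\top$, the hypothesis $\nu_-(S_{n+1})=\nu_-(S_n)$ together with the indefinite analogue of the nonnegative block-matrix criterion (Lemma~\ref{lem:3.1}) yields the two facts $b\in\ran S_n$ and $s_{2n+2}\ge b^\top S_n^{+}b$. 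Everything below is driven by these two facts.

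Next I would fix the coefficients. Since $n_N$ is the largest normal index, $\det S_{n_N-1}\neq 0$, so the columns $c_0,\dots,c_{n_N-1}$ of $S_n$, where $c_k=(s_k,\dots,s_{k+n})^\top$, are linearly independent; by Lemma~\ref{lem:3.2} and the definition of the Hankel rank the column $c_{n_N}$ lies in their span. This determines $\alpha_0,\dots,\alpha_{n_N-1}$ uniquely via $c_{n_N}=\sum_{i=0}^{n_N-1}\alpha_i c_i$, equivalently $\gamma:=(-\alpha_0,\dots,-\alpha_{n_N-1},1,0,\dots,0)^\top\in\ker S_n$, and already gives \eqref{3.2} for $n_N\le j\le n_N+n$.

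The heart of the proof, and the step I expect to be the main obstacle, is a shift-invariance of $\ker S_n$ that is available precisely because $b\in\ran S_n$. I would prove: if $w\in\ker S_n$ has vanishing last coordinate, then its downshift $w'=(0,w_0,\dots,w_{n-1})^\top$ again lies in $\ker S_n$. For the first $n$ coordinates $(S_n w')_i=(S_n w)_{i+1}=0$ by the Hankel structure, while the last coordinate equals $\langle w,b\rangle$, which vanishes because $b\in\ran S_n=(\ker S_n)^\perp$. Iterating this on $\gamma$ produces the $(n+1)-n_N$ vectors obtained by sliding the pattern $(-\alpha_0,\dots,-\alpha_{n_N-1},1)$ through positions $n_N,\dots,n$; these are manifestly independent, whence $\dim\ker S_n\ge (n+1)-n_N$ and therefore $\rank S_n=n_N$ (the case $n_N=n$ being immediate since then $\det S_{n-1}\neq0=\det S_n$). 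Reading the relations $S_n\gamma^{(t)}=0$ off coordinatewise gives exactly \eqref{3.2} for all $n_N\le j\le 2n$.

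Finally I would push one index further and produce the inequality. With $\rank S_n=n_N$ one has $\ran S_n=\spann(c_0,\dots,c_{n_N-1})$, so the vector $b-\sum_i\alpha_i c_{\,n+1-n_N+i}$ lies in $\ran S_n$, and its first $n$ coordinates vanish by the recurrence already established up to $2n$; since a vector of $\ran S_n$ whose first $n_N$ coordinates vanish must be zero (as $S_{n_N-1}$ is invertible), it is zero, and its last coordinate is the residual $s_{2n+1}-\sum_i\alpha_i s_{2n+1-n_N+i}$, giving \eqref{3.2} at $j=2n+1$. The resulting identity $b=\sum_i\alpha_i c_{\,n+1-n_N+i}$ then evaluates the Schur complement, using $S_n^{+}c_m=P_{\ran S_n}e_m$ and $b\in\ran S_n$, as $b^\top S_n^{+}b=\sum_{i}\alpha_i s_{2n+2-n_N+i}$; combined with $s_{2n+2}\ge b^\top S_n^{+}b$ this is precisely \eqref{3.2b} (with the left-hand side read as $s_{2n+2}$).
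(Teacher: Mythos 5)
Your proof is correct and follows essentially the same route as the paper's: both use Lemma~\ref{lem:3.1} to extract from $\nu_-(S_{n+1})=\nu_-(S_n)$ the two facts $v_{n+1}=(s_{n+1},\dots,s_{2n+1})^\top\in\ran S_n$ and the quadratic-form bound, take the initial recurrence at the largest normal index from Lemma~\ref{lem:3.2}, propagate it by a shift argument whose validity rests precisely on $v_{n+1}\perp\ker S_n$, and obtain the inequality from Lemma~\ref{lem:3.1}(ii). The remaining differences are presentational rather than substantive: you downshift kernel vectors where the paper shifts the coefficient vector in the range identities $v_{j+1}=S_nV^{j-n_N+1}\wh\alpha$ (the same identities read on opposite sides of $S_n$), you treat the index $j=2n+1$ by a separate range argument rather than as the last step of the iteration, you evaluate the bound through the pseudoinverse identity $s_{2n+2}\ge b^\top S_n^{+}b$ rather than quoting Lemma~\ref{lem:3.1}(ii) directly, and your reading of the left-hand side of \eqref{3.2b} as $s_{2n+2}$ (a typo in the statement) is exactly what the paper's proof derives.
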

   \begin{proof}
    Let us set $v_j=( s_j,\dots,s_{j+n})^\top$, $(0\le j\le n+1) $.
Since $\nu_-(S_n)=\nu_-(S_{n+1})$, it follows from
Lemma~\ref{lem:3.1} that
\[
  v_{n+1}\in\mbox{span }(v_0,\dots,v_{n}),
\]
i.e. there is $c\in\dC^{n+1}$ such that
   \begin{equation} \label{3.3}
   v_{n+1}=S_{n}c.
   \end{equation}
By Lemma~\ref{lem:3.2} there are real numbers
$\alpha_0,\dots,\alpha_{r-1}$, such that
\begin{equation}\label{eq:vect2}
 v_r= \begin{pmatrix}
      s_r \\
    \vdots \\
    s_{r+n}\\
\end{pmatrix}=
\begin{pmatrix}
      s_0 & \dots & s_{n} \\
      \vdots & \adots & \vdots \\
    s_{n} & \dots & s_{2n}
\end{pmatrix} \wh\alpha,\quad\mbox{where } \wh\alpha:=
\begin{pmatrix}
     \alpha_0 \\
    \dots \\
    \alpha_{r-1}\\
   0_{(n-r+1)\times 1}
\end{pmatrix}
   \end{equation}
This  together with~\eqref{3.3} implies, in particular, that
   \begin{equation} \label{3.5}
     s_{r+n+1}=c^*v_r=c^*S_{n}\wh\alpha =v_{n+1}^*\wh\alpha
   =\begin{pmatrix}
        s_{n+1} & \dots & s_{2n+1}
\end{pmatrix}\wh\alpha.
   \end{equation}
Denote by $V$ the $(n+1)\times(n+1)$ forward shift matrix
$V=(\delta_{i,j+1})_{i,j=1}^{n+1}$. Then~\eqref{3.2} and~\eqref{3.5}
imply
   \begin{equation*}
    v_{r+1}= \begin{pmatrix}
      s_{r+1} \\
    \vdots \\
    s_{r+n+1}\\
\end{pmatrix}=
 \begin{pmatrix}
      s_1 & \dots & s_{n+1} \\
   \vdots & \adots & \vdots \\
    s_{n+1} & \dots & s_{2n+1}
\end{pmatrix} \wh\alpha=S_{n}V\wh\alpha
   \end{equation*}
Iterating these calculations one obtains for $(0\le)j\le n$
  \begin{equation}\label{eq:5.7}
    v_{j+1}= \begin{pmatrix}
      s_{j+1} \\
    \vdots \\
    s_{j+n+1}\\
\end{pmatrix}=
\begin{pmatrix}
      s_1 & \dots & s_{n+1} \\
      \vdots & \adots & \vdots \\
    s_{n+1} & \dots & s_{2n+1}
\end{pmatrix} V^{j-r}\wh\alpha=S_{n}V^{j-r+1}\wh\alpha,
 \end{equation}
 which proves~\eqref{3.2}.
Setting in \eqref{eq:5.7} $j=n$ one obtains
    \[
v_{n+1}= S_{n}V^{n-r+1}\wh\alpha .
\]
Then it follows from Lemma~\ref{lem:3.1} that
   \begin{equation*}
   \begin{split}
  s_{2n+2} &\ge (V^{n-r+1}\wh\alpha)^*S_{n}V^{n-r+1}\wh\alpha\\
         &=(V^{n-r+1}\wh\alpha)^*\begin{pmatrix}
      s_{n+1} \\
    \vdots \\
    s_{2n+1}\\
\end{pmatrix}\\
&= \alpha_0s_{2n-r+2}+\dots +\alpha_{r-1}s_{2n+1}.
\end{split}
   \end{equation*}
Hence~~\eqref{3.2b} holds and this completes the proof.
   \end{proof}
This motivates the following definition which in the definite case
was used in~\cite{CF91}.
   \begin{defn}\label{def:3.1}
   A sequence $({\bf s},\ell)=\{s_j\}_{j=0}^{\ell}$ with the Hankel rank
$r=\mbox{rank }{\bf s}$ is called {\it recursively generated}, if
there exist numbers $\alpha_0,\dots,\alpha_{r-1}$, such that
   \begin{equation} \label{3.1}
s_{j} =\alpha_0s_{j-r}+\dots +\alpha_{r-1}s_{j-1}\quad (r\le j\le
\ell).
   \end{equation}
   \end{defn}

   \begin{thm} \label{thm:3.1}
   Let $({\bf s},\ell)$ be a sequence of real numbers such that $\det
   S_n=   0$, $n=[\ell/2]$,  let $n_1<\dots<n_N$ be all normal indices of the degenerate Hankel
matrix $S_n$, let $({\bf s}^{(N)},\ell-2n_N)$ be a sequence of
induced moments determined by successive application of~\eqref{2.7},
and let $\kappa=\nu_-(S_{n})$, $\kappa_N=\nu_-(S_{n_N-1})$. In the
case when $\ell=2n$ is even the following statements are equivalent:
   \begin{enumerate}
\item[(i)]
 The moment problem $\cM_\kappa({\bf s},\ell)$ is solvable;
\item[(ii)] The moment problem $\cM_{0}({\bf s}^{(N)},\ell-2n_N)$ is
solvable;
\item[(iii)]
$s_0^{(N)}=\dots=s_{\ell-2n_N}^{(N)}=0$;
\item[(iv)]
$S_n$ admits a
Hankel extension $S_{n+1}$ such that
$\nu_-(S_{n+1})=\nu_-(S_{n})$;
\item[(v)]
$({\bf s},\ell)$ is recursively generated;
\item[(vi)] $\mbox{rank }S_n=n_N$;
   \end{enumerate}
 If $\ell=2n+1$ is odd, then
\[
(i)\Leftrightarrow(ii)\Leftrightarrow (iii)\Leftrightarrow
(iv')\Leftrightarrow(v)\Leftrightarrow(vi)
\]
where (iv') and (vi') take the form:
   \begin{enumerate}
\item[(iv')]
there exists a real number $s_{2n+2}$, such that
$\nu_-(S_{n+1})=\nu_-(S_{n})$;
\item[(vi')] $\mbox{rank }S_n=n_N$ and $s_{\ell-2n_N}^{(N)}=0$.
   \end{enumerate}
If one of the above conditions  holds, then $\kappa=\kappa_N$ and
the moment problem $\cM_\kappa({\bf s},\ell)$ has the unique
solution given by
\begin{equation}\label{A0}
     \varphi(\lambda)=-\frac{Q_N(\lambda)}{P_N(\lambda)}.
\end{equation}
 \end{thm}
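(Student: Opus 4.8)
The plan is to reduce the problem to a degenerate basic one via the Schur--Chebyshev algorithm of Section~4 and then to read off solvability from the analysis of basic problems. Since $\det S_n=0$, Theorem~\ref{thm:2.2} applies with $j=N$ and gives a one-to-one correspondence $\varphi\mapsto\varphi_N$ between $\cM_\kappa({\bf s},\ell)$ and $\cM_{\kappa-\kappa_N}({\bf s}^{(N)},\ell-2n_N)$. By Corollary~\ref{cor:2.3} the induced matrix satisfies $\nu_-(S_{n-n_N}^{(N)})=\nu_-(S_n)-\nu_-(S_{n_N-1})=\kappa-\kappa_N$ and $\nu_0(S_{n-n_N}^{(N)})=\nu_0(S_n)$, and it has no normal indices. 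Thus the induced problem is a degenerate basic problem at the minimal negative signature $\kappa-\kappa_N=\nu_-(S_{n-n_N}^{(N)})$, and everything reduces to analyzing this basic problem.

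For (i)$\Leftrightarrow$(ii)$\Leftrightarrow$(iii) I would use the dichotomy for basic sequences. Because $S_{n-n_N}^{(N)}$ has no normal indices, the normalized induced sequence either vanishes identically (Case~(A)) or has its first nonzero moment at an index exceeding $n-n_N$ (Case~(B)); an earlier nonzero moment would force some principal Hankel determinant to be nonzero. In Case~(A) one has $\nu_-(S_{n-n_N}^{(N)})=0$, so $\kappa=\kappa_N$, and Lemma~\ref{prop:3.2} yields solvability of $\cM_0({\bf s}^{(N)},\ell-2n_N)$ with the unique solution $\varphi_N\equiv0$. In Case~(B) one has $\nu_0(S_{n-n_N}^{(N)})>0$, so Lemma~\ref{prop:3.3} requires $\kappa-\kappa_N\ge\nu_0+\nu_-$, which is incompatible with $\kappa-\kappa_N=\nu_-(S_{n-n_N}^{(N)})$; hence the induced problem is unsolvable. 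Transporting this through the bijection shows that (i) holds exactly in Case~(A), i.e. exactly when $s_0^{(N)}=\dots=s_{\ell-2n_N}^{(N)}=0$, which is (iii), and that then $\kappa=\kappa_N$ and the induced subscript is $0$, giving (ii). For the converse (ii)$\Rightarrow$(iii) I would note that solvability of the definite problem forces $S_{n-n_N}^{(N)}\ge0$, and a nonnegative Hankel matrix with vanishing leading entry is identically zero.

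The rank statement (vi) I would obtain from the nullity identity $\nu_0(S_{n-n_N}^{(N)})=\nu_0(S_n)$ of Corollary~\ref{cor:2.3}: since $\rank S_n=n_N$ is equivalent to $\nu_0(S_n)=n+1-n_N$, and this equals the order of $S_{n-n_N}^{(N)}$ precisely when that matrix vanishes, one gets (vi)$\Leftrightarrow$(iii); Lemma~\ref{lem:3.2} supplies the identification of $n_N$ with the Hankel rank. For (iv) and (v) I would invoke Lemma~\ref{lem:5.4}, which already delivers (iv)$\Rightarrow$(v) together with the extension inequality~\eqref{3.2b}; the reverse direction is obtained by showing that a recursion~\eqref{3.1} with $r=n_N$ propagates through the algorithm and annihilates the induced moments (hence (iii)), while (iii)/(vi) conversely permit a rank- and signature-preserving Hankel extension assembled from the recursion coefficients $\alpha_j$. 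In the odd case $\ell=2n+1$ the same reduction applies, the only change being that the boundary moment $s^{(N)}_{\ell-2n_N}$ lies outside $S^{(N)}_{n-n_N}$ and must be controlled separately; this is exactly the extra clause $s^{(N)}_{\ell-2n_N}=0$ in (vi') and the free choice of $s_{2n+2}$ in (iv'), both matching the odd-case branch of Lemma~\ref{prop:3.2}.

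Finally, I would derive uniqueness and the closed form~\eqref{A0} by substituting the unique induced solution $\varphi_N\equiv0$ into the transformation~\eqref{2.27}: by Theorem~\ref{thm:4.6} the relevant entries are $w_{12}^{(N)}=-Q_N$ and $w_{22}^{(N)}=P_N$, so $\varphi=w_{12}^{(N)}/w_{22}^{(N)}=-Q_N/P_N$. I expect the main obstacle to be the bookkeeping in the (iv),(v) equivalences—tracking how a single linear recursion in the original moments is transformed, step by step through the algorithm, into the vanishing of the induced moments and into an explicit signature-preserving extension—and, in the odd case, isolating the independent role played by the boundary moment $s^{(N)}_{\ell-2n_N}$.
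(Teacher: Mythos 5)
Your overall strategy coincides with the paper's: reduce via Theorem~\ref{thm:2.2} (applied with $j=N$, which is legitimate since $\det S_n=0$) and Corollary~\ref{cor:2.3} to the degenerate basic problem for $({\bf s}^{(N)},\ell-2n_N)$, decide solvability there through the Case (A)/(B) dichotomy of Lemmas~\ref{prop:3.2} and~\ref{prop:3.3}, obtain the rank equivalence from the nullity identity $\nu_0(S^{(N)}_{n-n_N})=\nu_0(S_n)$, get (iv)$\Rightarrow$(v) from Lemma~\ref{lem:5.4}, and read off the unique solution $-Q_N(\lambda)/P_N(\lambda)$ from Theorem~\ref{thm:4.6}. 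Your direct proof of (ii)$\Rightarrow$(iii) (solvability with $\kappa=0$ forces $S^{(N)}_{n-n_N}\ge 0$, while $s_0^{(N)}=0$ because the induced matrix has no normal indices, so the whole matrix vanishes) is a small, correct variation on the paper, which instead routes (ii)$\Rightarrow$(i)$\Rightarrow$(iii).

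The genuine gap is in how you close the loop at (iv). You propose to deduce (iv) from (iii)/(vi) by assembling a ``rank- and signature-preserving Hankel extension from the recursion coefficients $\alpha_j$''. But under (iii)/(vi) the only coefficients available are those of the single column relation \eqref{eq:vect} (via Lemma~\ref{lem:3.2}), which yields $s_j=\alpha_0 s_{j-n_N}+\dots+\alpha_{n_N-1}s_{j-1}$ only for $n_N\le j\le n_N+n$; to conclude that the extension obtained by continuing this recursion has $\rank S_{n+1}=n_N$ (and hence, by interlacing of eigenvalues of the principal submatrix $S_n$, the same negative signature) you need the recursion on the whole range $n_N\le j\le 2n$, i.e.\ you need (v). In your ordering, however, (v) is derived only from (iv) --- the very statement being proved --- so the argument as written is circular. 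Either prove the propagation step (vi)$\Rightarrow$(v) separately (it is true, via an injectivity argument for the projection of the column space onto the first $n_N$ coordinates, using $\det S_{n_N-1}\ne 0$, but this is nontrivial and nowhere cited), or do what the paper does: under (iii) the unique solution is the rational function $\varphi=-Q_N/P_N$ of degree $n_N$, its full asymptotic expansion supplies $s_{2n+1}$ and $s_{2n+2}$, and Kronecker's theorem gives $\rank S_{n+1}=\rank S_n$, whence $\nu_-(S_{n+1})=\nu_-(S_n)$. A related minor weakness: your (v)$\Rightarrow$(iii) ``propagation through the algorithm'' is also left unproved; it is simpler to observe that (v) immediately forces $\rank S_n=n_N$ (every column from index $n_N$ on lies in the span of the first $n_N$ columns), i.e.\ (vi), and then invoke your own equivalence (vi)$\Leftrightarrow$(iii), which is exactly the paper's route.
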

   \begin{proof} {\bf Even case.}
     (i) $\Rightarrow$ (iii) If the moment problem
     $MP_\kappa({\bf s},2n)$ is solvable then by
     Theorem~\ref{thm:2.2} the problem $MP_{\kappa-\kappa_N}({\bf
     s}^{(N)},2(n-n_N))$ is also solvable. If (iii) is not in force
     then it follows from Proposition~\ref{prop:3.3} that
\[
  \kappa-\kappa_N\ge
\nu_-(S_{n-n_N}^{(N)})+\nu_0(S_{n-n_N}^{(N)}).
\]
In view of Corollary~\ref{cor:2.3}
\begin{equation}\label{MinimS}
\nu_-(S_{n-n_N}^{(N)})=\nu_-(S_{n})-\nu_-(S_{n_N-1})=\kappa-\kappa_N.
\end{equation}
Therefore,
\[
\nu_-(S_{n-n_N}^{(N)})\ge
\nu_-(S_{n-n_N}^{(N)})+\nu_0(S_{n-n_N}^{(N)}),
\]
which implies $\nu_0(S_{n-n_N}^{(N)})=0$. But by the same
Corollary~\ref{cor:2.3} $\nu_0(S_{n-n_N}^{(N)})=\nu_0(S_{n})\ne 0$.

(iii) $\Rightarrow$ (ii) If (iii) holds then
$\nu_-(S_{n-n_N}^{(N)})=0$ and by Theorem~\ref{een}
$\varphi_N(\lambda)\equiv 0$ is the unique solution of the problem
$MP_0({\bf s}^{(N)},2(n-n_N))$. The equality $\kappa=\kappa_N$ is
implied by~\eqref{MinimS}.

(ii) $\Rightarrow$ (i) This follows from Theorem~\ref{thm:2.2}.

(iii) $\Rightarrow$ (iv) If (iii) holds then in view of
Theorem~\ref{thm:2.2} and Theorem~\ref{thm:4.6} the unique solution
of the problem $MP_\kappa({\bf s},2n)$ is given by
\[
  \varphi(\lambda)=-\frac{Q_N(\lambda)}{P_N(\lambda)}\in {\bf N}_\kappa.
\]
Since $\varphi$ is rational of $\mbox{rank }\varphi=n_N$ it admits
the asymptotic expansion~\eqref{0.2} for every $n$, in particular,
there exist $s_{2n+1},\,{s_{2n+2}}\in\dR$ such that
\[
\varphi(\lambda)=
-\frac{s_{0}}{\lambda}-\frac{s_{1}}{\lambda^{2}}-\dots
-\frac{s_{2n+2}}{\lambda^{2n+3}}
+o\left(\frac{1}{\lambda^{2n+3}}\right)
\quad(\lambda\widehat{\rightarrow }\infty)
\]
and the corresponding Hankel matrix $S_{n+1}$ has the same rank as
$S_n$. This implies that
 $\nu_-(S_{n+1})=\nu_-(S_n)$.

(iv) $\Rightarrow$ (v) Assume that $S_n$ admits a Hankel extension
$S_{n+1}$ such that $\nu_-(S_{n+1})=\nu_-(S_{n})$. Then by
Lemma~\ref{lem:3.2} there are
   $\alpha_0,\dots,\alpha_{n_N-1}$, such that
\begin{equation}\label{eq:5.8}
s_{j} =\alpha_0s_{j-n_N}+\dots +\alpha_{n_N-1}s_{j-1}\quad (n_N\le
j\le 2n+1).
\end{equation}
By Definition~\ref{def:3.1} this means that $({\bf s},2n)$ is
recursively generated.

(v) $\Rightarrow$ (vi) The condition (v) implies that
\begin{equation}\label{eq:5.9}
\begin{pmatrix}
      s_j \\
    \vdots \\
    s_{j+n}\\
\end{pmatrix}\in\mbox{span }\left\{\begin{pmatrix}
      s_0 \\
    \vdots \\
    s_{n}\\
\end{pmatrix},\dots
\begin{pmatrix}
      s_{n_N-1} \\
    \vdots \\
    s_{n_N-1+n}\\
\end{pmatrix}\right\}
\end{equation}
for all $j\le n $, and therefore, $\mbox{rank }S_n=n_N$.

(vi) $\Rightarrow$ (iii) If $\mbox{rank }S_n=n_N$ then
$\nu_0(S_n)=n-n_N+1$. By Lemma~\ref{lem:2.9}
\[
 \nu_0(S_{n-n_N}^{(N)})=\nu_0(S_n)=n-n_N+1
\]
and, hence, $s_j^{(N)}=0$ for all $j$ such that $0\le j\le
2(n-n_N)$.

{\bf Odd case.} In the odd case the proof of the equivalences
$(i)\Leftrightarrow(ii)\Leftrightarrow (iii)\Rightarrow (iv')$ is
pretty much the same.

     (iv') $\Rightarrow$ (v). Assume that there exists
$s_{2n+2}$ such that $\nu_-(S_{n+1})=\nu_-(S_{n})$. Then by
Lemma~\ref{lem:3.2} there are $\alpha_0,\dots,\alpha_{n_N-1}$, such
that~\eqref{eq:5.8} holds for all for all $(n_N\le)j\le 2n+1$. This
implies,  that the sequence $({\bf s},2n+1)$ is recursively
generated.

(v) $\Rightarrow$ (vi') The statement (v) implies that
\eqref{eq:5.9} holds for all $j\le n+1 $ and hence  there exist
$\beta_1,\dots, \beta_{n_N}$, such that
\[
\begin{pmatrix}
      s_{n+1}\\
    \vdots \\
    s_{2n+1}\\
\end{pmatrix}=\beta_1\begin{pmatrix}
      s_0 \\
    \vdots \\
    s_{n}\\
\end{pmatrix}+\dots+
\beta_{n_N}\begin{pmatrix}
      s_{n_N-1} \\
    \vdots \\
    s_{n_N-1+n}\\
\end{pmatrix}.
\]
Therefore, $\mbox{rank }S_n=n_N$. Let us set
\[
    s_{2n+2}=\beta_1 s_{n+1}+\dots+\beta_{n_N} s_{n+n_N}.
\]
Then $\mbox{rank }S_{n+1}=\mbox{rank }S_n=n_N$ and by (iii) we
obtain
\[
s_{0}^{(N)}=\dots=s_{2n+1-2n_N}^{(N)}=s_{2n+2-2n_N}^{(N)}=0.
\]

 (vi') $\Rightarrow$ (iii) If $\mbox{rank }S_n=n_N$ then it
was shown above that $s_j^{(N)}=0$ for all $j\le 2(n-n_N)$. Now
(iii) holds since $s_{2n+1-2n_N}^{(N)}=0$.
  \end{proof}
\begin{rem}
     If $\varphi$ is a
rational function of degree $r$ and $\varphi$ has the asymptotic
expansion~\eqref{0.2}, then for $n\ge r-1$, by Kronecker theorem
$\rank S_n=r$  and $\varphi\in \mathbf{N}_{\kappa}$, where
$\kappa=\nu_-(S_n)$ (see~\cite[Theorem~16.11.9]{Ga}). Then
by~Theorem~\ref{thm:3.1} the problem $IP_\kappa({\bf s},2n)$ has a
unique solution.

Now, let $\psi\in \mathbf{N}_{\kappa}$ be such that
\[
\psi(\lambda)=\varphi(\lambda)+o\left(\frac{1}{\lambda^{2r+1}}\right)
\mbox{ as } \lambda \wh \to \infty.
\]
Then both $\varphi$ and $\psi$ are solutions of the problem
$IP_\kappa({\bf s},2n)$ and hence
$\psi(\lambda)\equiv\varphi(\lambda)$. This proves the rigidity
result for generalized Nevanlinna functions obtained
in~\cite{ADLRSh10} and proved originally by Burns and Krantz  for
functions from the Schur class, \cite{BKr94}.
\end{rem}
In the next theorem we describe solutions of the problems
$MP_\kappa({\bf s},\ell)$ and $MI_\kappa({\bf s},\ell)$ in the case
where the rank of the Hankel matrix $S_n$ coincides with the Hankel
rank of the sequence.
\begin{thm} \label{thm:3.2}
Let $({\bf s},\ell)$ be a sequence of real numbers such that $\det
S_n=0$ ($n=[\ell/2]$) and let
\[
\rank S_{n}=n_N. %\rank S_{n_N-1}.
\]
 Then the problems
$MP_\kappa({\bf s},\ell)$ and $MI_\kappa({\bf s},\ell)$ are solvable
if and only if:
   \begin{enumerate} \item[(i)] either $\kappa=\nu_-(S_n)$ and %$({\bf s},\ell)$ is recursively generated
   the equivalent conditions of Theorem~\ref{thm:3.1} are satisfied;
\item[(ii)] or $\kappa\ge\nu_-(S_n)+\nu_0(S_n)$.
\end{enumerate}
If $\kappa\ge\nu_-(S_n)+\nu_0(S_n)$,  $W_{[1,N]}(\lambda)$ is given
by~\eqref{2.26}, and
\begin{equation}\label{eq:WA}
    W(\lambda)=W_{[1,N]}(\lambda)\left(%
\begin{array}{cc}
  1 & 0 \\
  0 & \lambda^{2\nu_0} \\
\end{array}%
\right)
\end{equation}
then  the formula %~\eqref{2.27B}, where
\begin{equation}\label{eq:A1}
\varphi(\lambda)=\cT_{W(\lambda)}[\tau(\lambda)],
\end{equation}
describes the sets $\cM_\kappa({\bf s},\ell)$ and $\cI_\kappa({\bf
s},\ell)$ as follows: in the even case
\[
\varphi\in\cM_\kappa({\bf s},\ell)\Leftrightarrow \tau \in
\mathbf{N}_{\kappa-\nu}\mbox{ and satisfies   (E) };
\]
and in the odd case
\[
\varphi\in\cI_\kappa({\bf s},\ell)\Leftrightarrow \tau+s_{\ell-2
n_N}^{(N)} \in \mathbf{N}_{\kappa-\nu}\mbox{ and satisfies   (O) };
\]
\[
\varphi\in\cM_\kappa({\bf s},\ell)\Leftrightarrow \tau+s_{\ell-2
n_N}^{(N)} \in \mathbf{N}_{\kappa-\nu,1}\mbox{ and satisfies   (O)
}.
\]
\end{thm}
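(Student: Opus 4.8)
The plan is to run the full Schur--Chebyshev algorithm down to the $N$-th induced sequence, recognize that the hypothesis $\rank S_n=n_N$ forces that reduced sequence to be of the degenerate basic Case~(A), and then read off both the solvability criterion and the parametrization from Lemma~\ref{prop:3.2}. Concretely, I would first apply Theorem~\ref{thm:2.2} at $j=N$: since $\det S_n=0$, that theorem makes the transformation $\cT_{W_{[1,N]}(\lambda)}$ a one-to-one correspondence between $\cI_\kappa({\bf s},\ell)$ and $\cI_{\kappa-\kappa_N}({\bf s}^{(N)},\ell-2n_N)$, and likewise between the associated $\cM$-sets, where $\kappa_N=\nu_-(S_{n_N-1})$. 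To identify the induced sequence, note that $\rank S_n=n_N$ gives $\nu_0(S_n)=n-n_N+1$, while Corollary~\ref{cor:2.3} yields $\nu_0(S_{n-n_N}^{(N)})=\nu_0(S_n)=:\nu_0$ and $\nu_-(S_{n-n_N}^{(N)})=\nu_-(S_n)-\kappa_N$. Thus $S_{n-n_N}^{(N)}$ is an $(n-n_N+1)\times(n-n_N+1)$ Hankel matrix of full nullity, hence the zero matrix; so $s_j^{(N)}=0$ for $0\le j\le 2(n-n_N)$, i.e. $({\bf s}^{(N)},\ell-2n_N)$ satisfies assumption (A), with only $s_{\ell-2n_N}^{(N)}$ possibly nonzero in the odd case. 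The same computation gives $\nu_-(S_n)=\kappa_N$, so that $\kappa-\kappa_N=\kappa-\nu_-(S_n)$.

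Next I would apply Lemma~\ref{prop:3.2} to the reduced problem with index $\wh\kappa:=\kappa-\nu_-(S_n)$. Its solvability dichotomy (reduced problem solvable iff $\wh\kappa=0$, together with $s_{\ell-2n_N}^{(N)}=0$ in the odd case, or $\wh\kappa\ge\nu_0$) transports through the bijection of Theorem~\ref{thm:2.2}, and after substituting $\wh\kappa=\kappa-\nu_-(S_n)$ it becomes exactly the two alternatives (i) and (ii). For the boundary alternative $\kappa=\nu_-(S_n)$ the equivalent conditions of Theorem~\ref{thm:3.1} are automatically in force, since the hypothesis $\rank S_n=n_N$ is precisely condition (vi) in the even case, and condition (vi$'$) together with $s_{\ell-2n_N}^{(N)}=0$ in the odd case. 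Moreover Lemma~\ref{prop:3.2} writes the reduced solutions as $\varphi_N(\lambda)=\tau(\lambda)/\lambda^{2\nu_0}=\cT_{\diag(1,\lambda^{2\nu_0})}[\tau(\lambda)]$, with $\tau$ (respectively $\tau+s_{\ell-2n_N}^{(N)}$) in the appropriate subclass and satisfying (E) or (O).

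Finally I would compose the two maps. Because $\cT$ is multiplicative on the associated $2\times2$ matrices,
\[
\varphi=\cT_{W_{[1,N]}(\lambda)}[\varphi_N]=\cT_{W_{[1,N]}(\lambda)}\big[\cT_{\diag(1,\lambda^{2\nu_0})}[\tau]\big]=\cT_{W(\lambda)}[\tau],\quad W(\lambda)=W_{[1,N]}(\lambda)\diag(1,\lambda^{2\nu_0}),
\]
which is exactly \eqref{eq:WA}--\eqref{eq:A1}; inserting the explicit form \eqref{2.26} of $W_{[1,N]}$ from Theorem~\ref{thm:4.6} and carrying the class memberships of $\tau$ back through both bijections produces the stated descriptions of $\cM_\kappa({\bf s},\ell)$ and $\cI_\kappa({\bf s},\ell)$ in the even and odd cases.

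The main obstacle will be the index bookkeeping for the class of $\tau$: I must verify that the total drop of the negative index assembles into the single exponent $\kappa-\nu$ of the statement, i.e. that the Schur contribution $\kappa_N=\nu_-(S_n)$ (via Corollary~\ref{cor:2.3}) combines with the cancellation contribution governed by \eqref{eq:nu} (via Lemma~\ref{lem:2.2}) exactly as encoded by $\nu$, and that the conditions (E)/(O) together with the odd-case shift by $s_{\ell-2n_N}^{(N)}$ are transported correctly through the composition. The remaining steps are routine once these identifications are pinned down.
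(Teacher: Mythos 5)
Your proposal is correct and follows essentially the same route as the paper's own proof: reduce via Theorem~\ref{thm:2.2} at $j=N$, use Corollary~\ref{cor:2.3} together with $\rank S_n=n_N$ to conclude that the induced sequence $({\bf s}^{(N)},\ell-2n_N)$ is of degenerate basic type (A) (the induced Hankel matrix is zero), and then invoke Lemma~\ref{prop:3.2}. In fact you supply more detail than the paper does, in particular the explicit composition $W(\lambda)=W_{[1,N]}(\lambda)\,\mathrm{diag}(1,\lambda^{2\nu_0})$ and the identification of alternative (i) with conditions (vi)/(vi$'$) of Theorem~\ref{thm:3.1}, both of which the paper leaves implicit in the phrase ``immediate from Lemma~\ref{prop:3.2}.''
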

   \begin{proof}
   It follows from Theorem~\ref{thm:2.2} that the problem
$MP_\kappa({\bf s},\ell)$ is solvable if and only if the basic
problem $MP_{\kappa-\nu_-(S_{n_N-1})}({\bf s}^{(N)},\ell-2n_N)$ is
solvable. Since $\rank S_{n}=\rank S_{n_N-1}$, then $\nu_\pm
(S_{n})=\nu_\pm( S_{n_N-1})$, and one obtains from
Corollary~\ref{cor:2.3} that $\rank S_{n-n_N}^{(N)}=0$. Hence the
basic problem $\cM_{\kappa-\nu_-}({\bf s}^{(N)},2(n-n_N))$ is of
type (A). The rest of the statements are immediate from
Lemma~\ref{prop:3.2}.
   \end{proof}

 \subsection{Degenerate moment problem. Case (B)}
In this subsection we give solvability criteria and describe
solutions of the problems $MP_\kappa({\bf s},\ell)$ and
$MI_\kappa({\bf s},\ell)$ in the case where the rank of the Hankel
matrix $S_n$ is greater then the Hankel rank of the sequence $({\bf
s},\ell)$.
\begin{thm} \label{thm:5.6}
Let $({\bf s},\ell)$ be a sequence of real numbers such that $\det
S_n=0$ ($n=[\ell/2]$) and let
\[
\rank S_{n}>n_N,
\]
Then the moment problem $\cM_\kappa({\bf s},\ell)$ is solvable if
and only if~\eqref{eq:SolvCond} holds.

Let $\nu_-:=\nu_-(S_n)$, $\nu_0:=\nu_0(S_n)$,  let $\nu$ be defined
by~\eqref{eq:nu}, let $({\bf s}^{(N)},\ell-2n_N)$ be the sequence of
induced moments after $N$ steps, let the integer $m$ be defined by
\[
  s_j^{(N)}=0 \mbox{ for }j<m;\quad \varepsilon:=s_m^{(N)}\ne 0,
\]
let the polynomial $\wh p$ be defined by the formulas~\eqref{eq:4.3}
and~\eqref{4.5b}, where $n:=n-n_N$, let the matrix-valued function
$W_{[1,N]}(\lambda)$ be given by~\eqref{2.26},  and finally let
\[%begin{equation}\label{eq:WA}
    W(\lambda)=W_{[1,N]}(\lambda)\left(%
\begin{array}{cc}
  0 & -\wh\varepsilon \\
  \lambda^{2\nu_0}\wh\varepsilon & \lambda^{2\nu_0}\wh p(\lambda) \\
\end{array}%
\right)
\]%end{equation}
If $\kappa$ satisfies~\eqref{eq:SolvCond}, then  the
formula~\eqref{eq:A1}, describes the sets $\cM_\kappa({\bf s},\ell)$
and $\cI_\kappa({\bf s},\ell)$ as follows: in the even case
\[
\varphi\in\cM_\kappa({\bf s},\ell)\Leftrightarrow \tau \in
\mathbf{N}_{\kappa-\nu-\nu_-}\mbox{ and satisfies   (E) }.
\]
and in the odd case
\[
\varphi\in\cI_\kappa({\bf s},\ell)\Leftrightarrow \tau+s_{\ell-2
n_N}^{(N)} \in \mathbf{N}_{\kappa-\nu-\nu_-}\mbox{ and satisfies (O)
};
\]
\[
\varphi\in\cM_\kappa({\bf s},\ell)\Leftrightarrow \tau+s_{\ell-2
n_N}^{(N)} \in \mathbf{N}_{\kappa-\nu-\nu_-,1}\mbox{ and satisfies
(O) }.
\]
\end{thm}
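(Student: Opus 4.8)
The plan is to reduce the problem, via the Schur--Chebyshev algorithm, to a basic problem of type (B) and then to invoke Lemma~\ref{prop:4.1}. First I would apply Theorem~\ref{thm:2.2}: since $\det S_n=0$, the transformation $\cT_{W_{[1,N]}(\lambda)}$ with $W_{[1,N]}(\lambda)$ as in~\eqref{2.26} gives a one-to-one correspondence between $\cM_\kappa({\bf s},\ell)$ and $\cM_{\kappa-\kappa_N}({\bf s}^{(N)},\ell-2n_N)$, and likewise for the $\cI$-sets, where $\kappa_N=\nu_-(S_{n_N-1})$. Thus both solvability and the description of the solution set for the original problem are governed entirely by the induced basic problem obtained after $N$ steps.

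Next I would identify this induced problem. By Corollary~\ref{cor:2.3} the induced matrix $S_{n-n_N}^{(N)}$ has no normal indices, so $\det S_j^{(N)}=0$ for every $0\le j\le n-n_N$; an elementary induction (for a Hankel matrix the vanishing of $\det S_j^{(N)}$ together with $s_0^{(N)}=\dots=s_{j-1}^{(N)}=0$ forces $s_j^{(N)}=0$, since the determinant then equals $\pm (s_j^{(N)})^{\,j+1}$) yields $s_0^{(N)}=\dots=s_{n-n_N}^{(N)}=0$. On the other hand, Corollary~\ref{cor:2.3} combined with $\rank S_{n_N-1}=n_N$ gives $\rank S_{n-n_N}^{(N)}=\rank S_n-n_N>0$ by the hypothesis $\rank S_n>n_N$; hence some $s_m^{(N)}\ne 0$ with $n-n_N<m\le 2(n-n_N)$. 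Therefore the induced problem is degenerate, basic, and of type (B), and the integer $m$ and the sign $\varepsilon=s_m^{(N)}$ in the statement are precisely those furnished by Lemma~\ref{prop:4.1} applied to $({\bf s}^{(N)},\ell-2n_N)$.

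Now I would run Lemma~\ref{prop:4.1} on the induced problem and compose. Writing $\nu_0'=\nu_0(S_{n-n_N}^{(N)})=\nu_0$ and $\nu_-'=\nu_-(S_{n-n_N}^{(N)})=\nu_--\kappa_N$, both from~\eqref{2.18a}--\eqref{2.18b}, the solvability criterion $\kappa-\kappa_N\ge\nu_0'+\nu_-'$ of Lemma~\ref{prop:3.3} collapses to $\kappa\ge\nu_0+\nu_-$, i.e.~\eqref{eq:SolvCond}. Lemma~\ref{prop:4.1} then parametrizes $\varphi_N$ by~\eqref{4.12b1}, which is the linear fractional transformation $\varphi_N=\cT_M[\tau]$ with
\[
M=\begin{pmatrix} 0 & -\varepsilon \\ \lambda^{2\nu_0}\varepsilon & \lambda^{2\nu_0}\wh p(\lambda)\end{pmatrix},
\]
the polynomial $\wh p$ being given by~\eqref{eq:4.3} and~\eqref{4.5b} with $n$ replaced by $n-n_N$. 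Since linear fractional transformations compose by matrix multiplication,
\[
\varphi=\cT_{W_{[1,N]}(\lambda)}\bigl[\cT_M[\tau]\bigr]
 =\cT_{W_{[1,N]}(\lambda)M}[\tau]=\cT_{W(\lambda)}[\tau],
\]
and $W_{[1,N]}(\lambda)M$ is exactly the matrix $W(\lambda)$ displayed in the statement, which establishes~\eqref{eq:A1}. Transferring the class assignments of Lemma~\ref{prop:4.1} through the substitution $\kappa\mapsto\kappa-\kappa_N$, $\nu_-\mapsto\nu_-'$ turns $\mathbf{N}_{(\kappa-\kappa_N)-\nu_-'-\nu}$ into $\mathbf{N}_{\kappa-\nu-\nu_-}$, giving the even-case description with (E) and, in the odd case, the descriptions with the shift $s_{\ell-2n_N}^{(N)}$ (the last induced moment plays the role of $s_\ell$ in Lemma~\ref{prop:4.1}) and condition (O).

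The hard part will be the index bookkeeping, in particular checking that the parameter $\nu$ of~\eqref{eq:nu} computed for the induced problem coincides with the global $\nu$ and that the negative indices subtract so that $\kappa_N$ cancels. This rests on the invariances $\nu_0(S_{n-n_N}^{(N)})=\nu_0(S_n)$ and $\nu_-(S_{n-n_N}^{(N)})=\nu_-(S_n)-\nu_-(S_{n_N-1})$ from~\eqref{2.18a}--\eqref{2.18b}, together with the fact that the $\lambda^{2\nu_0}$-cancellation in Lemma~\ref{lem:2.2} detects the same generalized-pole multiplicity $\kappa_0$ at the origin before and after the Schur steps; once this is in place, every class index aligns automatically.
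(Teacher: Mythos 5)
Your proposal is correct and follows essentially the same route as the paper's own proof: reduction via Theorem~\ref{thm:2.2} to the induced basic problem of type (B), solvability from Lemma~\ref{prop:3.3} combined with the index identities of Corollary~\ref{cor:2.3}, and the parametrization by composing $W_{[1,N]}(\lambda)$ from Theorem~\ref{thm:4.6} with the linear fractional transformation of Lemma~\ref{prop:4.1}. In fact you supply details the paper leaves implicit, notably the verification (via the rank computation $\rank S_{n-n_N}^{(N)}=\rank S_n-n_N>0$ and the vanishing of $s_0^{(N)},\dots,s_{n-n_N}^{(N)}$) that the induced problem really is of type (B), and the explicit matrix composition yielding $W(\lambda)$.
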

     \begin{proof}
   It follows from Theorem~\ref{thm:2.2} that the problem
$MP_\kappa({\bf s},\ell)$ is solvable if and only if the basic
problem $MP_{\kappa-\nu_-(S_{n_N-1})}({\bf s}^{(N)},\ell-2n_N)$ of
type (B) is solvable. By~Proposition~\ref{prop:3.3} the latter
problem is solvable if and only if
\[
\kappa-\nu_-(S_{n_N-1})\ge
\nu_0(S_{n-n_N}^{(N)})+\nu_-(S_{n-n_N}^{(N)}).
\]
Due to Corollary~\ref{cor:2.3}
\[
\nu_0(S_{n-n_N}^{(N)})=\nu_0(S_{n}),\quad
\nu_-(S_{n-n_N}^{(N)})=\nu_-(S_{n})-\nu_-(S_{n_N-1}).
\]
These proves the criterion~\eqref{eq:SolvCond}. The statement for
the problem $IP_\kappa({\bf s},\ell)$ is proved by the same
reasonings.

The second part of the proof is implied by Theorem~\ref{thm:2.2},
Theorem~\ref{thm:4.6} and Theorem~\ref{prop:4.1}.
   \end{proof}

\appendix

\section{Some lemmas on matrices}

\subsection{Some lemmas for block matrices}

The verification of the first lemma is left to the reader.

\begin{lem}
The inverse of the invertible block matrix $\wt A$ of the form
\[
 \wt A
=\begin{pmatrix}
   0            & 0        & A_{13} \\
   0            &  A_{22}  & A_{23}\\
    A_{13}^*      &  A_{23}^*& A_{33}\\
\end{pmatrix}
\]
%is invertible and the inverse $\wt A^{-1}$
is given by
\[
\wt A^{-1}=
\begin{pmatrix}
-A_{13}^{-*}(A_{33}-A_{23}^*A_{22}^{-1}A_{23})A_{13}^{-1}
                        & -A_{13}^{-*}A_{23}^*A_{22}^{-1}  & A_{13}^{-*} \\
-A_{22}^{-1}A_{23}A_{13}^{-1}              &  A_{22}^{-1}  & 0           \\
 A_{13}^{-1}                               &  0            & 0\\
\end{pmatrix}
\]
\end{lem}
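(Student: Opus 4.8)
The plan is to establish the identity $\wt A\,\wt A^{-1}=I$, either by a direct block multiplication or, what I find marginally more transparent, by \emph{deriving} the stated inverse from the block system $\wt A x=b$. Before starting I would record that the formula presupposes $A_{13}$ to be square and invertible and $A_{22}$ to be invertible, and that both facts are forced by the invertibility of $\wt A$ together with the vanishing of the three corner blocks. Indeed, the first block row $(0,0,A_{13})$ has full rank only if the size of block $1$ does not exceed that of block $3$, while the first block column $(0,0,A_{13}^*)^\top$ forces the reverse inequality; hence the two sizes agree and $A_{13}$ is square. Permuting block rows $1$ and $3$ turns $\wt A$ into a block triangular matrix, whence $\det\wt A=\pm\,\overline{\det A_{13}}\,\det A_{13}\,\det A_{22}$, so that $\wt A$ is invertible precisely when $A_{13}$ and $A_{22}$ are.

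For the derivation I would write $x=(x_1,x_2,x_3)^\top$ and $b=(b_1,b_2,b_3)^\top$ conformally with the block sizes. The structural point is that the zero blocks make the system solvable in cascade. The first block row reads $A_{13}x_3=b_1$, so $x_3=A_{13}^{-1}b_1$; the second block row $A_{22}x_2+A_{23}x_3=b_2$ then gives $x_2=A_{22}^{-1}b_2-A_{22}^{-1}A_{23}A_{13}^{-1}b_1$; and finally the third block row $A_{13}^*x_1+A_{23}^*x_2+A_{33}x_3=b_3$ yields $x_1=A_{13}^{-*}(b_3-A_{23}^*x_2-A_{33}x_3)$. Substituting the already computed $x_2$ and $x_3$ into this last line and collecting the coefficients of $b_1$, $b_2$, $b_3$ produces exactly the three entries of the top block row of the claimed $\wt A^{-1}$, while the expressions for $x_2$ and $x_3$ are its middle and bottom block rows. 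Since this solves $\wt A x=b$ for every $b$, the matrix so obtained is $\wt A^{-1}$.

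As an independent check I would multiply $\wt A$ by the candidate matrix $B$ directly and confirm $\wt A B=I$ block by block. The only entries that are not immediate are those of the bottom block row; for instance $(\wt AB)_{31}=A_{13}^*B_{11}+A_{23}^*B_{21}+A_{33}B_{31}$, where $A_{13}^*$ absorbs the leading $A_{13}^{-*}$ in $B_{11}$ and the resulting term $A_{23}^*A_{22}^{-1}A_{23}A_{13}^{-1}$ cancels against $A_{23}^*B_{21}$, leaving $(-A_{33}+A_{33})A_{13}^{-1}=0$. The remaining blocks collapse analogously, giving $A_{13}A_{13}^{-1}$, $A_{22}A_{22}^{-1}$ and $A_{13}^*A_{13}^{-*}$ on the diagonal and cancellations off it. There is no conceptual obstacle here: the whole content is bookkeeping, and the single point demanding care is keeping the adjoints straight, in particular distinguishing $A_{13}^{-*}$ from $A_{13}^{-1}$. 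It is worth noting that no Hermiticity of the diagonal blocks $A_{22}$, $A_{33}$ is used anywhere; only the displayed adjoint pattern relating the $(1,3)$, $(3,1)$ and the $(2,3)$, $(3,2)$ blocks enters the computation.
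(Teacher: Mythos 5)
The paper gives no argument for this lemma at all (its verification is explicitly ``left to the reader''), so there is no proof to compare against; your cascade solution of $\wt A x=b$, together with the block-multiplication check of $\wt A B=I$, is exactly the routine verification the authors intend, and that core part of your proposal is correct: with $A_{13}$ square invertible and $A_{22}$ invertible, solving the block rows in the order $1,2,3$ reproduces precisely the three block rows of the displayed inverse.

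However, your preliminary structural claim --- that invertibility of $\wt A$ by itself forces $A_{13}$ to be square --- is false, and the argument you offer for it does not work. The first block row $(0,0,A_{13})$ having full row rank forces (size of block $1$) $\le$ (size of block $3$); but the first block column $(0,0,A_{13}^*)^\top$ having full column rank forces the \emph{same} inequality, not the reverse one, because $A_{13}^*$ has full column rank exactly when $A_{13}$ has full row rank. A concrete counterexample: with block sizes $1,1,2$ take
\[
A_{13}=\begin{pmatrix} 1 & 0\end{pmatrix},\quad A_{22}=1,\quad
A_{23}=\begin{pmatrix} 0 & 0 \end{pmatrix},\quad
A_{33}=\begin{pmatrix} 0 & 0\\ 0 & 1\end{pmatrix},
\]
so that $\wt A$ becomes a $4\times 4$ permutation matrix, hence invertible, while $A_{13}$ is $1\times 2$. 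Consequently, squareness and invertibility of $A_{13}$ (and invertibility of $A_{22}$) must be read as implicit hypotheses of the lemma --- they are in any case needed for the expressions $A_{13}^{-1}$, $A_{13}^{-*}$, $A_{22}^{-1}$ to make sense --- and this is how the lemma is actually used in the paper: in the proof of Lemma~\ref{lem:2.9} both $A_{13}$ (an anti-triangular block with the nonzero entry $s_{n_1-1}$ on its anti-diagonal) and $A_{22}=S_{n_1-1}$ are invertible by construction. What \emph{is} true, and would repair your remark, is that once the partition is conformal (so $A_{13}$ is square), invertibility of $\wt A$ does force both $A_{13}$ and $A_{22}$ to be invertible: $A_{13}$ because the first block row must be surjective, and $A_{22}$ because $A_{22}x_2=0$ implies $\wt A\bigl(-A_{13}^{-*}A_{23}^*x_2,\;x_2,\;0\bigr)^\top=0$, whence $x_2=0$; your determinant identity then also becomes correct. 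None of this affects your main computation, which stands as a valid proof of the stated formula.
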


The following lemma extends a well-known result for nonnegative
block matrices.

\begin{lem}\label{lem:3.1}
Let $\wt A$ be a Hermitian matrix of the form
\[
\wt A=
\begin{pmatrix}
    A  & B\\
    B^*& C\\
\end{pmatrix},\quad A\in\dC^{n\times n}, \,\, C\in\dC^{m\times m}
\]
such that $\nu_-(\wt A)=\nu_-(A)$. Then
\begin{enumerate}\def\labelenumi{(\roman{enumi})}
\item $\ran B\subseteq\ran A$;

\item if $Bh=Ag$ for some $h\in\dC^m$, $g\in\dC^n$, then
$h^*Ch\ge g^*Ag$.
\end{enumerate}
\end{lem}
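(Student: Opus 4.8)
The plan is to read off both assertions from the variational description of the negative index, $\nu_-(M)=\max\{\dim\cV:\ M|_\cV<0\}$, together with the observation that the vectors $\binom{x}{0}$ ($x\in\dC^n$) span a copy of $\dC^n$ on which the form $\wt A$ acts exactly as $A$. In particular $\nu_-(\wt A)\ge\nu_-(A)$ always, and the hypothesis $\nu_-(\wt A)=\nu_-(A)$ says that this embedded subspace already realizes the full negative index. Both parts will then be proved by contradiction: if a conclusion failed, I would enlarge a maximal negative subspace of $A$, contradicting the equality of indices.

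For (i) I would first pass to a Schur-complement normal form. Using the orthogonal decomposition $\dC^n=\ran A\oplus\ker A$, diagonalize $A=A_1\oplus 0$ with $A_1$ invertible of size $\rank A$, and write $B=\binom{B_1}{B_2}$ accordingly, so that $\ran B\subseteq\ran A$ is equivalent to $B_2=0$. A congruence with $\left(\begin{smallmatrix}I&0&-A_1^{-1}B_1\\0&I&0\\0&0&I\end{smallmatrix}\right)$ removes $B_1$ and, by Sylvester's law, shows that $\wt A$ is congruent to $A_1\oplus\left(\begin{smallmatrix}0&B_2\\B_2^*&C'\end{smallmatrix}\right)$ with $C'=C-B_1^*A_1^{-1}B_1$; hence $\nu_-(\wt A)=\nu_-(A)+\nu_-\!\left(\begin{smallmatrix}0&B_2\\B_2^*&C'\end{smallmatrix}\right)$. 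The crux is the sublemma $\nu_-\!\left(\begin{smallmatrix}0&B_2\\B_2^*&C'\end{smallmatrix}\right)\ge\rank B_2$, which I would establish by exhibiting an explicit negative subspace: choosing an orthonormal basis $e_1,\dots,e_k$ of $\ran B_2$ and preimages $B_2 f_i=e_i$, the vectors $\binom{e_i}{t f_i}$ have Gram matrix $2tI_k+t^2\Gamma$ with respect to the form (where $\Gamma=[f_i^*C'f_j]$), and this is negative definite for small $t<0$. Combining the two facts gives $\nu_-(\wt A)\ge\nu_-(A)+\rank B_2$, so the hypothesis forces $B_2=0$, that is, $\ran B\subseteq\ran A$.

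For (ii) the decisive computation is that the vector $w=\binom{-g}{h}$ is $\wt A$-orthogonal to the entire embedded $\dC^n$. Indeed, since $Bh=Ag$ one has $\wt A\,w=\binom{0}{Ch-B^*g}$, whose first block vanishes, so $\binom{x}{0}^*\wt A\,w=0$ for every $x$, while $w^*\wt A\,w=h^*Ch-g^*Ag$ (using $h^*B^*g=(Bh)^*g=(Ag)^*g=g^*Ag$). Assume $h\neq0$ (the case $h=0$ is immediate, giving $0\ge0$). Take a maximal negative subspace $\cN$ of $A$ and its embedding $\widehat\cN$, on which $\wt A$ is negative definite; as $w$ has nonzero second block it lies outside $\widehat{\dC^n}\supseteq\widehat\cN$. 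Because $w$ is $\wt A$-orthogonal to $\widehat\cN$, the form $\wt A$ restricted to $\widehat\cN\oplus\spann\{w\}$ is block diagonal, so if $w^*\wt A\,w<0$ this $(\nu_-(A)+1)$-dimensional subspace would be negative definite, contradicting $\nu_-(\wt A)=\nu_-(A)$. Hence $w^*\wt A\,w\ge0$, which is precisely $h^*Ch\ge g^*Ag$.

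The only genuinely nontrivial step is the inertia sublemma behind (i): the appearance of an extra negative direction for every independent direction in which $B_2\neq0$ is exactly the indefinite ``hyperbolic pair'' phenomenon, and it is what the equality $\nu_-(\wt A)=\nu_-(A)$ rules out. Once the small-$t$ negative subspace is produced, the congruence bookkeeping is routine, and part (ii) follows cleanly from the $\wt A$-orthogonality of $\binom{-g}{h}$ to the embedded space, which requires no further structure beyond $Bh=Ag$.
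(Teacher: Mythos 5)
Your proof is correct, and it diverges from the paper's in an interesting way. For part (i) you and the paper share the same skeleton: both split $A$ into an invertible block plus a zero block and use a Schur-complement congruence to strip off the invertible part. But the end-game differs: the paper first proves the invertible case (the congruence plus Sylvester forces $C-B^*A^{-1}B\ge 0$), then applies it to the regrouped matrix with $A_0$ in the corner, and finishes by invoking the fact that a positive semidefinite matrix with a vanishing diagonal block has vanishing off-diagonal blocks, obtaining (in its notation) $B_1=0$ \emph{and} the inequality $C-B_0^*A_0^{-1}B_0\ge 0$; you instead prove the quantitative inertia bound $\nu_-(\wt A)\ge\nu_-(A)+\rank B_2$ by exhibiting the hyperbolic-pair vectors $\binom{e_i}{tf_i}$ with $t<0$ small, which is a sharper version of the same obstruction. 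The genuine divergence is in part (ii): the paper reads $h^*Ch\ge g^*Ag$ directly off the semidefinite Schur complement via $h^*Ch\ge h^*B_0^*A_0^{-1}B_0h=g_0^*A_0g_0=g^*Ag$, so its (ii) is a byproduct of the machinery built for (i); your argument with $w=\binom{-g}{h}$ --- which is $\wt A$-orthogonal to the embedded $\dC^n$, so that a maximal negative subspace of $A$ could be enlarged by $w$ if $w^*\wt A\,w<0$ --- is self-contained, uses only the variational characterization of $\nu_-$, and depends neither on part (i) nor on any Schur complement. What the paper's route buys is the stronger explicit bound $h^*Ch\ge h^*B_0^*A_0^{-1}B_0h$ and a direct reduction to the classical nonnegative-block-matrix lemma that this result generalizes; what yours buys is logical independence of the two parts and an argument for (ii) that transfers verbatim to any setting where the negative index admits the max-dimension-of-negative-subspace description.
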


\begin{proof}
First assume that $A$ is invertible. Then the identity
   \begin{equation*}
\wt A=\begin{pmatrix}
    A  & B\\
    B^*& C\\
\end{pmatrix}= \begin{pmatrix}
    I  & 0\\
    B^*A^{-1}& I\\
\end{pmatrix}\begin{pmatrix}
    A  & 0\\
    0& C-B^*A^{-1}B\\
\end{pmatrix}\begin{pmatrix}
    I  & A^{-1}B\\
    0& I\\
\end{pmatrix},
   \end{equation*}
and the assumption $\nu_-(\wt A)=\nu_-(A)$ shows that $C-B^*A^{-1}B
\ge 0$. If $A$ is not invertible, then there is a block
decomposition
\begin{equation*}
 A=\begin{pmatrix}
    A_0  & 0\\
    0& 0\\
\end{pmatrix}   ,\quad
B=\begin{pmatrix}
    B_0 \\
    B_1\\
\end{pmatrix} ,
   \end{equation*}
where $A_0$ is invertible. Note that $\nu_-(A)=\nu_-(A_0)$. Now the
previous statement can be applied to the block matrix
\[
  \wt A=\begin{pmatrix}
    A_0  & 0 & B_0\\
      0  & 0 & B_1\\
    B_0^*&B_1^*& C\\
\end{pmatrix}
\]
with the invertible matrix $A_0$ in the left upper corner, since
$\nu_-(\wt A)=\nu_-(A_0)$. Thus, one concludes that
\[
 \begin{pmatrix}
   0 & B_1\\
B_1^*& C\\
\end{pmatrix}
-\begin{pmatrix}
      0 \\
    B_0^*\\
\end{pmatrix}A_0^{-1}\begin{pmatrix}
 0 & B_0\\
\end{pmatrix}=\begin{pmatrix}
     0 & B_1\\
  B_1^*& C-B_0^*A_0^{-1}B_0\\
\end{pmatrix}\ge 0.
\]
This implies $B_1=0$ and
\begin{equation}\label{ineq00}
C-B_0^*A_0^{-1}B_0\ge 0.
\end{equation}
The identity $B_1=0$ means that $\ran B\subseteq\ran A_0=\ran A$
proving the range inclusion in (i). On the other hand, it follows
from \eqref{ineq00} that the vectors $h$ and $g_0$ for which
$B_0h=A_0g_0$ satisfy the inequality
\[
h^*Ch\ge h^*B_0^*A_0^{-1}B_0h=g_0^*A_0g_0=g^*Ag.
\]
Now (ii) is implied by this inequality.
\end{proof}

\subsection{Some results for Hankel matrices}

   \begin{lem} \label{lem:2.9}
Let $S_{n}$ be a Hankel matrix, let $n_1$ be the first normal index
of $S_{n}$, and let the polynomial
$p_1(\lambda)=p_{n_1}^{(1)}\lambda^{n_1}+\dots+ p_0^{(1)}$ and the
moment sequence $({\bf s}^{(1)},\ell-2n_1)$ be defined
by~\eqref{2.7}. Then for all $i=\overline{0,n-n_1}$
   \begin{equation} \label{2.9a}
\nu_\pm(S_{i}^{(1)})=\nu_\pm(S_{i+n_1})-\nu_\pm(S_{n_1-1});
 \end{equation}
 \begin{equation} \label{2.9b}
\nu_0(S_{i}^{(1)})=\nu_0(S_{i+n_1}).
  \end{equation}
   \end{lem}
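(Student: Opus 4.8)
The plan is to deduce both inertia identities from a single congruence of $S_{i+n_1}$ and then apply Sylvester's law of inertia. Fix $i$ with $0\le i\le n-n_1$ and partition the Hankel matrix $S_{i+n_1}=(s_{p+q})_{p,q=0}^{i+n_1}$ so that its leading $n_1\times n_1$ corner is $S_{n_1-1}$:
\[
S_{i+n_1}=\begin{pmatrix} S_{n_1-1} & B\\ B^* & C\end{pmatrix},
\]
where $B=(s_{n_1+a+b})_{a=0,\,b=0}^{n_1-1,\,i}$ and $C=(s_{2n_1+p'+q'})_{p',q'=0}^{i}$. Since $n_1$ is the first normal index, $S_{n_1-1}$ is invertible, so $\nu_0(S_{n_1-1})=0$, and conjugating by $L=\begin{pmatrix} I & -S_{n_1-1}^{-1}B\\ 0 & I\end{pmatrix}$ block-diagonalizes $S_{i+n_1}$. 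Sylvester's law of inertia (additivity of inertia across the Schur complement) then gives
\[
\nu_\pm(S_{i+n_1})=\nu_\pm(S_{n_1-1})+\nu_\pm(M),\qquad \nu_0(S_{i+n_1})=\nu_0(M),
\]
where $M=C-B^*S_{n_1-1}^{-1}B$ is the Schur complement. Thus everything reduces to identifying $M$.

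The key claim I would establish is that $M=a_1^2\,S_i^{(1)}$, a \emph{positive} multiple of the induced Hankel matrix. Granting this, $a_1^2>0$ yields $\nu_\pm(M)=\nu_\pm(S_i^{(1)})$ and $\nu_0(M)=\nu_0(S_i^{(1)})$, and substitution into the two displayed equalities proves~\eqref{2.9a} and~\eqref{2.9b}. To prove the claim I would use the defining Toeplitz identity~\eqref{2.7}. Writing $f(\lambda)=-\lambda^{n_1}\varphi(\lambda)$, so that $f$ has coefficients $f_k=s_{n_1-1+k}$ by~\eqref{2.2b}, and $g=1/f$, the relation~\eqref{eq:2.4a} (with $m=n_1-1$, $\varepsilon=\varepsilon_1$, $\tau=a_1^2\varphi_1$) reads $\varepsilon_1\lambda^{n_1}g=p_1+\varepsilon_1a_1^2\varphi_1$; reading off the coefficients of degree below $0$ and using~\eqref{2.6} gives $a_1^2 s_k^{(1)}=-g_{n_1+1+k}$. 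On the other hand, rewriting $S_{n_1-1}^{-1}=\varepsilon_1\,T(p_{n_1}^{(1)},\dots,p_1^{(1)})\,J_{n_1}$ via the identity $T(p_{n_1}^{(1)},\dots,p_1^{(1)})J_{n_1}=s_{n_1-1}S_{n_1-1}^{-1}$ preceding~\eqref{negsq0a}, the entries of $B^*S_{n_1-1}^{-1}B$ turn into double convolutions of the polynomial coefficients $p_j^{(1)}$ against the moments, which collapse by the convolution identity $f\ast g=\delta$ encoded in~\eqref{2.7} (Lemma~\ref{lem:2.3}). After the collapse one is left with $M_{p'q'}=-g_{n_1+1+p'+q'}=a_1^2 s^{(1)}_{p'+q'}$; here the normalization $|s_{n_1-1}|=1$ is exactly what cancels the spurious scalar $s_{n_1-1}^2$ that would otherwise appear.

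The main obstacle is precisely this identity $M=a_1^2 S_i^{(1)}$: the statement that the Schur complement of the leading invertible block of a Hankel matrix is the (scaled) Hankel matrix of the reciprocal tail sequence, which is the algebraic content of one step of the Schur--Chebyshev reduction. The delicate point is the bookkeeping in collapsing the double convolution; note that Lemma~\ref{lem:3.1} does not suffice on its own, since it provides only a range inclusion and an inequality, whereas here the \emph{exact} Schur complement is required. An equivalent route, which I would keep in reserve, avoids the Schur complement altogether: conjugate $S_{i+n_1}$ directly by the upper-triangular Toeplitz matrix $G=T(g_0,\dots,g_{i+n_1})$. Using $s_j=0$ for $j<n_1-1$ together with $f\ast g=\delta$, the top $n_1-1$ rows of $S_{i+n_1}G$ degenerate to a unit anti-triangular block carrying inertia $\nu_\pm(S_{n_1-1})$, while the trailing block reproduces the Hankel matrix of the tail coefficients, namely $a_1^2 S_i^{(1)}$; in that approach the sign and anti-diagonal bookkeeping would be the point to control.
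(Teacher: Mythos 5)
Your skeleton (split off the invertible block $S_{n_1-1}$, pass to the Schur complement $M=C-B^*S_{n_1-1}^{-1}B$, and apply Haynsworth's inertia additivity) is the same skeleton the paper uses, but your central claim --- the exact equality $M=a_1^2S_i^{(1)}$ --- is false, and the argument collapses there. The Schur complement of the leading block of a Hankel matrix is in general not Hankel at all: take $n_1=1$, $s_0=1$, $i=2$, so that $M_{p'q'}=s_{2+p'+q'}-s_{1+p'}s_{1+q'}$; then $M_{02}-M_{11}=s_2^2-s_1s_3$, which is generically nonzero, so $M$ cannot equal the Hankel matrix $a_1^2S_2^{(1)}$. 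Even for $i=1$, where any symmetric $2\times 2$ matrix is formally Hankel, the entries disagree: with $n_1=1$, $s_0=1$ one has $M_{01}=s_3-s_1s_2$, while your (correct) coefficient identity gives $a_1^2s_1^{(1)}=-g_3=s_1^3-2s_1s_2+s_3$; the discrepancy is $s_1(s_1^2-s_2)$. So the claimed collapse $M_{p'q'}=-g_{n_1+1+p'+q'}$ of the double convolution to a single coefficient of $g=1/f$ does not happen.

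What is true --- and what the paper proves --- is a congruence rather than an equality. Flipping the Toeplitz identity \eqref{2.7} by $J_{2i+n_1+2}$ on both sides (formula \eqref{2.10}) and applying the block-inversion lemma of the Appendix to the padded Hankel matrix $\wt A$ whose lower-right block is $S_{n_1+i}$, one obtains \eqref{2.13}:
\[
a_1^2\,J_{i+1}S_i^{(1)}J_{i+1}
=A_{13}^{-1}\bigl(A_{33}-A_{23}^*A_{22}^{-1}A_{23}\bigr)A_{13}^{-1},
\]
where $A_{22}=S_{n_1-1}$, $A_{33}-A_{23}^*A_{22}^{-1}A_{23}$ is exactly your $M$, and $A_{13}=(s_{n_1-1+a+b-i})_{a,b=0}^{i}$ is the invertible anti-triangular Hankel block. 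Thus $M=A_{13}\bigl(a_1^2J_{i+1}S_i^{(1)}J_{i+1}\bigr)A_{13}$: the induced Hankel matrix appears only up to congruence by the flip $J_{i+1}$ and by $A_{13}$, which preserves inertia and is therefore just as good for \eqref{2.9a} and \eqref{2.9b}, but is strictly weaker than your claim. Your reserve route (conjugating $S_{i+n_1}$ by $T(g_0,\dots,g_{i+n_1})$) is in substance the paper's argument, but you left it as a sketch, and the ``sign and anti-diagonal bookkeeping'' you defer there is precisely the pair of extra factors $J_{i+1}$ and $A_{13}$ whose omission broke your primary route; until that computation is actually carried out, the proof has a genuine gap.
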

   \begin{proof}
Consider the equation~\eqref{2.7} with $j=2i$.  Multiplying it both
from the left and from the right by the matrix $J_{2i+n_1+2}$ one
obtains the equality
   \begin{equation} \label{2.10}
\begin{pmatrix}
   0            &  \wt A_{12} \\
   \wt A_{12}^* &  \wt A_{22}\\
\end{pmatrix}
\begin{pmatrix}
   B_{11}   &  B_{12} \\
   B_{12}^* &  B_{22}\\
\end{pmatrix}
=\varepsilon_1I_{2i+n_1+2},
   \end{equation}
where
\[
\wt A_{12}=\begin{pmatrix}
   0 & \dots &\dots & \dots& 0      &   s_{n_1-1} \\
    &&&&&\\
\vdots&      &      &\adots& \adots &   \vdots \\
&&&&&\\
0 & \dots    &0     &  s_{n_1-1}    &\dots     &   s_{n_1+i-1} \\
\end{pmatrix}
\in\dC^{(i+1)\times(n_1+i+1)},
\]
\[
\wt A_{22}=S_{n_1+i}\in\dC^{(n_1+i+1)\times(n_1+i+1)},
\]
   \begin{equation} \label{2.11}
B_{11}=-\varepsilon_1a_1^2J_{i+1} S_{i}^{(1)}J_{i+1}\in
\dC^{(i+1)\times(i+1)}
   \end{equation}
and $B_{12}$, $B_{22}$ are some matrices from
$\dC^{(i+1)\times(i+n_1+1)} $ and $\dC^{(i+n_1+1)\times(i+n_1+1)}$,
respectively. Let us decompose the matrices $\wt A_{12}$, $\wt
A_{22}$ as follows
   \begin{equation*}
\wt A_{12}=\begin{pmatrix}
    0_{(i+1)\times n_1}  & A_{13}\\
\end{pmatrix} ,\quad
\begin{array}{c}
  A_{13}=\begin{pmatrix}
            &         &   s_{n_1-1} \\
            &\adots   &     \vdots \\
s_{n_1-1}    &  \dots  &   s_{n_1+i-1} \\
\end{pmatrix}\in\dC^{(i+1)\times (i+1)},\,
\end{array}
   \end{equation*}
   \begin{equation*}
\wt A_{22}=\begin{pmatrix}
    A_{22}  & A_{23}\\
    A_{23}^*& A_{33}\\
\end{pmatrix}
=\begin{pmatrix}
    S_{n_1-1}  & A_{23}\\
    A_{23}^*& A_{33}\\
\end{pmatrix}  ,\quad
\begin{array}{c}
    A_{23}\in\dC^{n_1\times(i+1)},\\
  A_{33}\in\dC^{(i+1)\times(i+1)}. \\
\end{array}
   \end{equation*}
Since the matrices $A_{13}=A_{13}^*$, $A_{22}=S_{n_1-1}$ are
invertible, it follows from Lemma A1 that the matrix
\[
 \wt A
=\begin{pmatrix}
   0            & 0        & A_{13} \\
   0            &  A_{22}  & A_{23}\\
    A_{13}   &  A_{23}^*& A_{33}\\
\end{pmatrix}
\]
is invertible, and the left upper corner $B_{11}$ of the inverse
matrix $\wt A^{-1}$ is given by
   \begin{equation} \label{2.12}
\wt
B_{11}=(A^{-1})_{11}=-A_{13}^{-1}(A_{33}-A_{23}^*A_{22}^{-1}A_{23})A_{13}^{-1}
   \end{equation}
 It follows from~\eqref{2.10}-\eqref{2.12} that
   \begin{equation} \label{2.13}
   a_1^2J_{i+1} S_{i}^{(1)}J_{i+1}=-\varepsilon_1B_{11}=-(\wt
A^{-1})_{11}=
   A_{13}^{-1}(A_{33}-A_{23}^*A_{22}^{-1}A_{23})A_{13}^{-1}
   \end{equation}
 and hence
   \begin{equation} \label{2.14}
\nu_\pm(S_{i}^{(1)})=\nu_\pm(A_{33}-A_{23}^*A_{22}^{-1}A_{23});
 \end{equation}
 \begin{equation} \label{2.15}
\nu_0(S_{i}^{(1)})=\nu_0(A_{33}-A_{23}^*A_{22}^{-1}A_{23}).
  \end{equation}
The numbers of positive (negative, zero) eigenvalues of the Schur
complement $A_{33}-A_{23}^*A_{22}^{-1}A_{23}$ can be calculated by
the formulas
   \begin{equation} \label{2.16}
\nu_\pm(A_{33}-A_{23}^*A_{22}^{-1}A_{23})=\nu_\pm(\wt
A_{22})-\nu_\pm(A_{22});
 \end{equation}
 \begin{equation} \label{2.17}
\nu_0(A_{33}-A_{23}^*A_{22}^{-1}A_{23})=\nu_0(\wt
A_{22})-\nu_0(A_{22}).
  \end{equation}
Since $\wt A_{22}=S_{i+n_1}$ and $A_{22}=S_{n_1-1}$ is invertible
the statements of the Lemma~\ref{lem:2.9} are implied
by~\eqref{2.14}-\eqref{2.17}.
   \end{proof}

\end{document}